 \let\mathscr\relax
\DeclareMathOperator{\GL}{GL}
\DeclareMathOperator{\Id}{Id}
\DeclareMathOperator{\Aut}{Aut}
\DeclareMathOperator{\Inn}{Inn}
\DeclareMathOperator{\Out}{Out}
\DeclareMathOperator{\Ind}{Ind}
\DeclareMathOperator{\Res}{Res}
\DeclareMathOperator{\diag}{diag}
\DeclareMathOperator{\Conv}{Conv}
\DeclareMathOperator{\Spin}{Spin}
\DeclareMathOperator{\SL}{SL}
\let\O\undefine
\DeclareMathOperator{\O}{O}
\DeclareMathOperator{\U}{U}
\DeclareMathOperator{\Sp}{Sp}
\DeclareMathOperator{\SO}{SO}
\newtheorem{theorem}{Theorem}[section]
\newtheorem{lemma}[theorem]{Lemma}
\newtheorem{corollary}[theorem]{Corollary}
\newtheorem{proposition}[theorem]{Proposition}
\newtheorem{definition}[theorem]{Definition}
\newtheorem{remark}[theorem]{Remark}
\newtheorem{problem}[theorem]{Problem}
\newcommand{\im}{\mathrm{im}} 
\newcommand{\rank}{\mathrm{rank}} 
\newcommand{\RR}{\mathbb{R}} 
\newcommand{\CC}{\mathbb{C}} 
\newcommand{\QQ}{\mathbb{Q}} 
\newcommand{\FF}{\mathbb{F}} 
\newcommand{\ZZ}{\mathbb{Z}} 
\newcommand{\HH}{\mathbb{H}} 
\newcommand{\spn}{\mathrm{span}}
\newcommand{\Ad}{\mathrm{Ad}}
\newcommand{\ad}{\mathrm{ad}}
\newcommand{\W}{\mathcal{W}}
\newcommand{\tr}{\mathrm{tr}}
\newcommand{\tp}{{\scriptscriptstyle\mathsf{T}}}
\title{The Cayley transform on representations}
\author[J.~Y.~Lu]{Jingyu Lu}
\address{KLMM, Academy of Mathematics and Systems Science, Chinese Academy of Sciences, Beijing 100190, China}
\email{lujingyu@amss.ac.cn}
\author[K.~Ye]{Ke Ye }
\address{KLMM, Academy of Mathematics and Systems Science, Chinese Academy of Sciences, Beijing 100190, China}
\email{keyk@amss.ac.cn}
\begin{document}
%
\begin{abstract}
The classical Cayley transform is a birational map between a quadratic matrix group and its Lie algebra,  which was first discovered by Cayley in 1846.  Because of its essential role in both pure and applied mathematics,  the classical Cayley transform has been generalized from various perspectives.  This paper is concerned with a representation theoretic generalization of the classical Cayley transform.  The idea underlying this work is that the applicability of the classical Cayley transform heavily depends on how the Lie group is represented.  The goal is to characterize irreducible representations of a Lie group,  to which the classical Cayley transform is applicable.  To this end,  we first establish criteria of the applicability for a general Lie group.  If the group is semisimple,  we further obtain a geometric condition on the weight diagram of such representations.  Lastly,  we provide a complete classification for complex simple  Lie groups and their compact real forms.  Except for the previously known examples,  spin representations of $\Spin(8)$ are the only ones on our list.
\end{abstract}
\maketitle
\section{Introduction}
\subsection*{Classical Cayley transform} Let $\mathbb{F}$ be either $\mathbb{C}$ or $\mathbb{R}$.  The \emph{classical Cayley transform} is defined as the birational map 
\begin{equation}\label{eq:classicalCayley}
C: \mathfrak{gl}_n(\mathbb{F})  \dasharrow \GL_n(\mathbb{F}),\quad C(u) \coloneqq (I_n + u)(I_n  - u)^{-1}.
\end{equation}
The classical Cayley transform is \emph{applicable} to a Lie subgroup $G \subseteq \GL_n(\mathbb{F})$ if $C$ is defined on a neighbourhood $V$ of $0$ in $\mathfrak{g}$ such that $C(V) \subseteq G$,  where $\mathfrak{g}$ is the Lie algebra of $G$.  Familiar examples of such groups are:
\begin{itemize}
\item[$\diamond$] \emph{Quadratic matrix group} \cite{Cayley1846,  Weyl39,  Weil60,Postnikov86}: Let $\mathbb{V}$ be an $\mathbb{F}$-vector space and let $\sigma: \GL(\mathbb{V}) \to \GL(\mathbb{V})$ be an $\mathbb{R}$-linear involution.  For any $B\in \GL(\mathbb{V})$ such that $\sigma(B) = \pm B$,  the classical Cayley transform is applicable to $G_B \coloneqq  \left\{g \in   \GL(\mathbb{V}): B(gx,gy) = B(x,y), \; x, y \in \mathbb{V} \right\}$.  By choosing different $\sigma$ and $B$,  we obtain classical matrix groups $\O_n(\mathbb{F}),  \Sp_{2n}(\mathbb{F})$ and $\U_n$ as special examples.
\item[$\diamond$] \emph{Diagonal matrix group} \cite[Example~1.20]{LPR06}: $C$ is applicable to $T_n$  where $T_n \subseteq \GL_n(\mathbb{F})$ is the group consisting of all invertible diagonal matrices.  
\item[$\diamond$] \emph{Unipotent group} \cite{BR85,KM03}: Let $R_n \subseteq \GL_n(\mathbb{F})$ be the subgroup of all upper triangular unipotent matrices.  $C$ is applicable to $R_n$.  In fact,  we have $I_n + \mathfrak{r}_n = R_n$ from which we obtain $C(\mathfrak{r}_n) \subseteq  I_n + \mathfrak{r}_n = R_n$.  Here $\mathfrak{r}_n$ denotes the Lie algebra of $R_n$,  consisting of strictly upper triangular matrices.
\item[$\diamond$] \emph{Upper triangular matrix group}: Let $B_n \subseteq \GL_n(\mathbb{F})$ be the group of all invertible upper triangular matrices.  Then it is straightforward to verify that $C$ is applicable to $B_n$. 
\end{itemize}

If $C$ is applicable to $G \subseteq \GL_n(\mathbb{F})$,  then we obtain a \emph{simple and explicit isomorphism} between neighbourhoods of $0 \in \mathfrak{g}$ and $I_n \in G$.  This simple observation leads to numerous applications of the classical Cayley transform in both pure and applied mathematics.  For instance,  the LS category of quadratic matrix groups can be computed by the Cayley transform \cite{GMP11}; The Cayley transform is closely related to several topological and geometric invariants of super vector bundles: sub-bundles,  superconnection character form and Bott maps \cite{Quillen85, Quillen88}; Symmetric Siegel domains may be characterized by the geometry of images of the Cayley transform \cite{Nomura01,  Nomura03, Kai07}.  On the other side,  $C(u/2)$ is the best Pad\'{e} approximant of type $(1,1)$ of the matrix exponential function $\exp(u)$ \cite{BG96}.  In particular,  $C(u/2)$ is a second order approximation of $\exp(u)$.  This is a desirable property for numerical computations in various fields of applied mathematics such as numerical linear algebra \cite{MR96,BCM04, MST24},  numerical differential equation and integration \cite{Iserles01,PX01,HLW10, WGM21} and Riemannian optimization \cite{WY13, GSAS21}.  Moreover,  the Cayley transform is widely employed by practitioners in statistics \cite{JHD20},  machine learning \cite{HWY18,  ESAH20} and signal processing \cite{JH03, MD24}. 

The above mentioned applications of the Cayley transform is a consequence of the fact that it establishes a correspondence between a Lie group and its Lie algebra.  It is noticeable that the Cayley transform can also be defined on the complex plane or some space of operators,  which makes it a fundamental ingredient in complex analysis \cite{Krantz99,ST20,vWW24} and operator theory \cite{SF70,  Lance95, JM17,BKR20}. 
\subsection*{Generalized Cayley transform}
Because of its essential role in both theoretical studies and practical applications,  there have been several attempts to generalize the Cayley transform in the past 40 years.  The first generalization is given by an analytic construction based on the Iwasawa decomposition of H-type groups \cite{Helgason78,CDK91,CDKR98,ACD04,AD06},  but it is no longer a map between the Lie group and its Lie algebra.  The second generalization is for homogeneous spaces of quadratic matrix groups \cite{GMP11,GSAS21},  which is specifically designed for efficient calculations on these quotient manifolds.  The third generalization is for algebraic groups,  which defines a \emph{generalized Cayley transform} as an equivariant birational isomorphism between an algebraic group and its Lie algebra \cite{BR85,KM03}.  The ultimate goal of all existing works \cite{KM03,LPR06,BKLR14,BK15,Borovoi15} is to solve the following problem,  which is a natural extension of Luna's problem \cite{Domingo75} for $\SL_n$.
\begin{problem}[Lemire-Popov-Reichstein problem]\cite[Problem~1.7]{LPR06}\label{prob:LPR}
Which linear algebraic group can have a generalized Cayley transform?
\end{problem}
\subsection*{Main results} The purpose of this paper is to investigate a representation theoretic generalization of the classical Cayley transform.  We notice that the applicability of the classical Cayley transform $C$ depends on how we represent the group.  For instance,  it is straightforward to verify that $C$ is applicable to $\SL_2(\mathbb{C}) \subseteq \mathbb{C}^{2 \times 2}$.  It is also applicable to $\Ad(\SL_2(\mathbb{C})) \subseteq \GL(\mathfrak{sl}_2(\mathbb{C})) \simeq \GL_3(\mathbb{C})$ where $\Ad: \SL_2(\mathbb{C}) \to \GL(\mathfrak{sl}_2(\mathbb{C}))$ is the adjoint representation of $\SL_2(\mathbb{C})$.  In fact,  $\SL_2(\mathbb{C})$ is a double cover of $\SO_3(\mathbb{C})$ and $\Ad(\SL_2(\mathbb{C})) = \SO_3(\mathbb{C})$ if we identify $\mathfrak{sl}_2(\mathbb{C})$ with $\mathbb{C}^3$.  However,  $C$ is not applicable to $\rho(\SL_2(\mathbb{C})) \subseteq \GL\left( \mathsf{S}^3\mathbb{C}^2 \right) \simeq \GL_4(\mathbb{C})$ where $\rho: \SL_2(\mathbb{C}) \to \GL(\mathsf{S}^3 \mathbb{C}^2)$ is the irreducible representation of $\SL_2(\mathbb{C})$ on the third symmetric power $\mathsf{S}^3(\mathbb{C}^2)$.  Because of this observation,  we have the following analogue of the Lemire-Popov-Reichstein (abbreviated as LPR) problem~\ref{prob:LPR}.
\begin{problem}[Representation theoretic LPR problem]
\label{prob:LPR-rep}
To which pair $(G,\rho)$ of a Lie group $G$ and its representation $\rho: G\to \GL(\mathbb{V})$,  the classical Cayley transform $C$ defined in \eqref{eq:classicalCayley} is applicable?
\end{problem}
Here by Definition~\ref{def:Cayley representation},  $C$ is \emph{applicable} to $\rho$ if there is an open neighbourhood $V$ of $0$ in $\mathfrak{g}$ such that $C \circ d\rho (V) \subseteq \rho (G)$.  We notice that if $G$ is a linear algebraic group and $\rho: G \to \GL(\mathbb{V})$ is a faithful representation to which $C$ is applicable,  then clearly $G$ satisfies the requirement in the LPR problem.  This paper is devoted to answering Problem~\ref{prob:LPR-rep}.  First,  we establish in Theorem~\ref{cond:3} the equivalence between the applicability of $C$ to $\rho$ and the power span property of  $d\rho (\mathfrak{g})$ (cf. Definition~\ref{cond:span}).  Next,  we restrict our discussion to semisimple Lie groups in Section~\ref{sec:semisimple} and obtain the equivalence between the applicability of $C$ to $\rho$ and the power span property of $d\rho (\mathfrak{h})$ in Theorem~\ref{cond:simple}.  Here $\mathfrak{h}$ denotes the Cartan subalgebra of $\mathfrak{g}$.  Moreover,  we prove in Theorem~\ref{thm:main} that if $\rho$ is irreducible and $C$ is applicable to $\rho$,  then the weight diagram of $\rho$ must be the orbit of the highest weight under the action of Weyl group,  possibly union with the origin.  Lastly,  Section~\ref{sec:simple} is concerned with a complete classification of irreducible representations of complex simple Lie groups and their compact real forms,  to which $C$ is applicable.  Our classification results (cf. Theorems~\ref{thm:classification1} and \ref{thm:classification2}) indicate that the only irreducible representations to which $C$ is applicable are: 
\begin{itemize} 
\item[$\diamond$] $G$ is of type $A_1$,  $B_n$ ($n\ge 1$),  $C_n$ ($n \ge 3$) or $D_n$ ($n \ge 3$) and $\rho$ is the standard representation.
\item[$\diamond$]  $G$ is of type $D_4$ and $\rho$ is one of the two spin representations.
\end{itemize}
Our results may be recognized as representation theoretic analogues of \cite[Theorem 1.31]{LPR06}. 

\subsection*{Caveats}
In this paper, we consider Lie groups and Lie algebras over $\mathbb{R}$ and $\mathbb{C}$.  Unless otherwise stated, representations to be discussed are all finite-dimensional complex vector spaces (except for the adjoint representations of Lie groups and Lie algebras),  even for real Lie groups and real Lie algebras.  A morphism between two real (resp.  complex) Lie groups is a smooth (resp.  complex analytic) group homomorphism.

\section{Preliminaries and notations}
We use capital letters such as $G$,  $H$,  $K$ and $U$ to denote Lie groups and their Lie algebras are correspondingly denoted by $\mathfrak{g}$,  $\mathfrak{h}$,  $\mathfrak{k} $ and $\mathfrak{u}$.  In particular,  $\mathfrak{h}$ is reserved for Cartan subalgebra.  We denote by $\mathcal{U}(\mathfrak{g})$ the universal enveloping algebra of $\mathfrak{g}$.

\subsection{Real and complex semisimple Lie groups}

\subsubsection{Complexification}
Let $\mathfrak{g}$ be a real semisimple Lie algebra.  We denote its \emph{complexification} by $\mathfrak{g}^{\mathbb{C}} \coloneqq \mathfrak{g} \otimes \mathbb{C}$.  A \emph{complexification of a real Lie group $G$} \cite[Chapitre 3]{bourbaki2006groupes} is a complex Lie group $G^{\CC}$ together with a real Lie group morphism $\tau: G \to G^{\CC}$ satisfying the following universal property: For any complex Lie group $H$ and any real Lie group morphism $\psi: G \to H$,  there exists a unique complex Lie group morphism $\varphi: G^{\CC} \to H$ such that $\varphi \circ \tau = \psi$. The complexification of a connected real Lie group $G$ with Lie algebra $\mathfrak{g}$ always exists.  If moreover $G$ is simply connected,  then the Lie algebra of $G^\CC$ is isomorphic to $\mathfrak{g}^\CC$ \cite[Chapitre 3,4,5,6]{bourbaki2006groupes}.

\subsubsection{Real form}
A \emph{real form of a complex semisimple Lie algebra $\mathfrak{u}$} \cite[Lecture 26]{fulton2013representation} is a real Lie subalgebra $\mathfrak{u}'$ of $\mathfrak{u}$ such that ${\mathfrak{u}'}^{\mathbb{C}}\simeq \mathfrak{u}$.  Let $U$ be a connected complex Lie group.  A \emph{real form of $U$} is a real Lie subgroup $U'$ of $U$ such that $U$ is a complexification of $U'$ and $\mathfrak{u}'^{\CC} \simeq \mathfrak{u}$. Real forms of a complex Lie group are not unique in general.  In Section \ref{sec:semisimple}, we will discuss two important real forms of complex semisimple Lie groups: the split form and the compact form.

\subsubsection{Restricted and induced representation}
Given a complex semisimple Lie algebra $\mathfrak{u}$ and a representation $\pi: \mathfrak{u} \to \mathfrak{gl}(\mathbb{V})$,  the restricted representation of $\pi$ to its real form $\mathfrak{u}_0$ is defined by 
\[
\Res(\pi) \coloneqq \pi|_{\mathfrak{u}_0}: \mathfrak{u}_0 \to \mathfrak{gl}(\mathbb{V}),\quad 
\Res(\pi)(x) = \pi(x).
\]
Similarly,  given a complex semisimple Lie group $U$ and a representation $\rho: U \to \GL(\mathbb{V})$,  the \emph{restricted representation} of $\rho$ to a real form $U_0$ of $U$ is  
\[
\Res(\rho) \coloneqq \rho|_{U_0}: U_0 \to \GL(\mathbb{V}),\quad \Res(\rho)(g) = \rho(g).
\]

Suppose that $\pi: \mathfrak{g} \to \mathfrak{gl}(\mathbb{V})$ is a representation of a real semisimple Lie algebra $\mathfrak{g}$.  The \emph{induced representation} of $\pi$ is 
\[
    \Ind(\pi): \mathfrak{g}^{\mathbb{C}} \to \mathfrak{gl}(\mathbb{V}),\quad \Ind(\pi)(x_1 + i x_2) = \pi(x_1) + i \pi(x_2), 
\]
where $x_1,x_2 \in \mathfrak{g}$.  Since $\mathbb{V}$ is a complex vectors space,  $\Ind(\pi)$ is well-defined.  If $\rho: G \to \GL(\mathbb{V})$ is a representation of a real semisimple Lie group $G$ and $(G^{\mathbb{C}},\tau)$ is its complexification.  The \emph{induced representation} of $\rho$ is the representation $\Ind (\rho):G^\mathbb{C} \to \GL(\mathbb{V})$ uniquely determined by the universal property of $(G^\mathbb{C},\tau)$. If moreover $G$ is a real form of $G^{\CC}$, then explicitly we have
\[
\Ind(\rho): G^\mathbb{C} \to \GL(\mathbb{V}),\quad \Ind(\rho)(\exp (x)) = \exp (\Ind( d \rho) (x)),
\]
where $x\in \mathfrak{g}^{\mathbb{C}}$ and $\exp:\mathfrak{g} \to G$ is the exponential map.

\subsubsection{Weight lattice and irreducible representation}
Let $\mathfrak{g}$ be a semisimple Lie algebra.  Suppose that $\Lambda$ (resp.  $\Phi$,  $\W$,  $C$) is the weight lattice (resp.  set of roots,  Weyl group,  fundamental Weyl chamber) of $\mathfrak{g}$.  
\begin{theorem}\cite[Theorem 14.18]{fulton2013representation}\label{thm: FTRTSLA}
We have the following:
\begin{enumerate}[(a)]
\item There is a one to one correspondence between $C \cap \Lambda$ and the set of isomorphism classes of irreducible finite dimensional representations of $\mathfrak{g}$.  

\item Let $\rho: \mathfrak{g} \to \mathfrak{gl}(\mathbb{V})$ be the irreducible finite dimensional representation with highest weight $\omega$ and let $\mathbb{V} = \bigoplus_{\lambda \in \mathbb{V}} \mathbb{V}_{\lambda}$ be the weight space decomposition.  Then
$\{\lambda \in \Lambda: \mathbb{V}_{\lambda} \ne 0 \} = \Conv(\W \omega) \cap \left( \omega + \ZZ\Phi \right)$,  where $\W \omega$ is the $\W$-orbit of $\omega$ and $\Conv(\W \omega)$ denotes the convex hull of $\W \omega$. 
\end{enumerate}
\end{theorem}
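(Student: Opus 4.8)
The plan is to treat this as the classical theorem of the highest weight: part (a) comes from the structure theory of highest weight modules, and part (b) from $\mathfrak{sl}_2$-string (``saturation'') arguments together with one combinatorial lemma on convex hulls of Weyl orbits. Throughout one fixes a Borel subalgebra $\mathfrak b = \mathfrak h \oplus \mathfrak n^{+}$, the Chevalley generators $e_i,f_i,h_i$ attached to the simple roots, and the dominance order $\lambda \preceq \mu \iff \mu - \lambda \in \NN\Phi^{+}$, and writes $\mu^{+}$ for the dominant $\W$-conjugate of $\mu$.

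For (a), I would first extract a highest weight from an arbitrary finite dimensional irreducible $\rho\colon\mathfrak g \to \mathfrak{gl}(\mathbb V)$: the finite set of weights has a $\preceq$-maximal element $\omega$, a nonzero $v \in \mathbb V_\omega$ is annihilated by $\mathfrak n^{+}$, and irreducibility together with the Poincar\'e--Birkhoff--Witt theorem gives $\mathbb V = \mathcal U(\mathfrak g)v = \mathcal U(\mathfrak n^{-})v$, whence every weight is $\preceq\omega$, $\dim\mathbb V_\omega = 1$, $\omega$ is the unique maximal weight, and, restricting to each $\mathfrak{sl}_2$-triple $(e_i,h_i,f_i)$, $\langle\omega,\alpha_i^{\vee}\rangle \in \ZZ_{\ge 0}$, i.e.\ $\omega \in C\cap\Lambda$. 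Uniqueness for a fixed $\omega$ follows by passing to the $\mathfrak g$-submodule of $\mathbb V\oplus\mathbb V'$ generated by the sum of highest weight vectors, which is a highest weight module of weight $\omega$, has a unique maximal proper submodule, and surjects onto both $\mathbb V$ and $\mathbb V'$. Existence for a given $\omega \in C\cap\Lambda$ follows by forming the Verma module $M(\omega)=\mathcal U(\mathfrak g)\otimes_{\mathcal U(\mathfrak b)}\CC_\omega$ and passing to its unique irreducible quotient $L(\omega)$; the key technical input is finite dimensionality of $L(\omega)$, for which one checks that $f_i^{\,\langle\omega,\alpha_i^{\vee}\rangle+1}v_\omega$ is again killed by $\mathfrak n^{+}$ and hence vanishes in $L(\omega)$, so each $e_i$ and $f_i$ acts locally nilpotently, $\dim L(\omega)_\lambda$ is $\W$-invariant, the weight set is $\W$-stable and contained in $\omega-\NN\Phi^{+}$, hence contained in the compact polytope $\Conv(\W\omega)$ and so finite, while each weight space is finite dimensional by PBW on $M(\omega)$.

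For (b), the inclusion ``$\subseteq$'' is the easy half: $\mathbb V=\mathcal U(\mathfrak n^{-})v$ puts every weight of $\mathbb V$ in $\omega-\NN\Phi^{+}\subseteq\omega+\ZZ\Phi$; local nilpotence makes the weight set $\Pi$ invariant under each simple reflection, hence under $\W$; and since every $w\lambda$ with $\lambda\in\Pi$ is again a weight, hence $\preceq\omega$, the identity $\Conv(\W\omega)=\{\mu:\mu^{+}\preceq\omega\}$ yields $\Pi\subseteq\Conv(\W\omega)\cap(\omega+\ZZ\Phi)$. For ``$\supseteq$'' I would observe that $\Pi$ is \emph{saturated} --- closed under $\mu\mapsto\mu-i\alpha$ for every root $\alpha$ and $0\le i\le\langle\mu,\alpha^{\vee}\rangle$, which is immediate from $\mathfrak{sl}_2$-theory --- and then invoke the combinatorial lemma that a saturated set containing a dominant, $\preceq$-maximal element $\omega$ contains every $\mu$ with $\mu^{+}\preceq\omega$; intersecting with the coset $\omega+\ZZ\Phi$, to which $\Pi$ belongs, finishes.

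The hard part is this last combinatorial lemma: the saturation/convexity statement that the obvious necessary conditions $\mu^{+}\preceq\omega$ and $\mu\in\omega+\ZZ\Phi$ are already sufficient for $\mu$ to be a weight. Its proof is a delicate induction on the height of $\omega-\mu$: using $\W$-stability one reduces to $\mu$ dominant, and then, from $\mu\ne\omega$, one must locate a simple root $\alpha$ appearing in $\omega-\mu$ such that a single reflection pulls a weight of strictly smaller height back into range and the $\alpha$-string through it descends to $\mu$ --- the subtlety being precisely that $\mu+\alpha$ need not stay dominant, nor even inside $\Conv(\W\omega)$, under the naive choice, so the inductive bookkeeping of heights and positivity must be done with care. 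Everything else is formal from Poincar\'e--Birkhoff--Witt, the representation theory of $\mathfrak{sl}_2$, and the construction of Verma modules.
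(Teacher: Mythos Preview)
The paper does not prove this theorem at all: it is quoted verbatim from \cite[Theorem~14.18]{fulton2013representation} as a background result in the preliminaries section, with no argument supplied. So there is no ``paper's own proof'' to compare against.

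That said, your sketch is a faithful outline of the standard proof one finds in Fulton--Harris or Humphreys: highest weight extraction via a $\preceq$-maximal weight and PBW, uniqueness by the diagonal-submodule trick, existence via the irreducible Verma quotient $L(\omega)$ with finite dimensionality coming from local nilpotence of the $e_i,f_i$, and for part~(b) the saturation argument reducing the reverse inclusion to the combinatorial lemma that any saturated set with highest element $\omega$ contains every $\mu$ with $\mu^{+}\preceq\omega$. Your identification of that last lemma as the genuinely delicate step is accurate. One small remark: the identity $\Conv(\W\omega)=\{\mu:\mu^{+}\preceq\omega\}$ that you invoke in the easy half of (b) is itself not entirely trivial and deserves its own short argument (it follows from the fact that $\omega - w\omega \in \NN\Phi^{+}$ for all $w\in\W$ together with convexity), but this is a standard ingredient and not a gap.
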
  
\subsection{Symmetric algebraic groups}
A complex linear algebraic group $G \subseteq \GL_n(\mathbb{C})$ is called \emph{linearly reductive} \cite[Section 3.1.3]{wallach2017geometric} if all the finite-dimensional regular representations are completely reducible . A complex linear algebraic group $G \subseteq \GL_n(\mathbb{C})$ is called \emph{symmetric}  \cite[Section 3.1.3]{wallach2017geometric}  if $G^\ast =G$, where $G^\ast \coloneqq \left\{\overline{g}^\tp: g \in G \right\}$. It is worth remarking that linear reductivity is independent of embeddings,  whereas the definition of a symmetric group depends on the specific embedding of $G$ into $\GL_n(\mathbb{C})$.   Lemma \ref{lem: linear reductive sym} below reveals that linearly reductive groups and symmetric groups are two sides of the same coin.

\begin{lemma}\label{lem: linear reductive sym}\cite[Lemma 3.5, Theorem 3.13]{wallach2017geometric}
Let $G \subseteq \GL_n(\mathbb{C})$ be a  linear algebraic group.  Then $G$ is linearly reductive if and only if there is some $P \in \GL_n(\mathbb{C})$ such that $P G P^{-1}$ is a  symmetric group.  
\end{lemma}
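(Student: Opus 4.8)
The plan is to exploit the correspondence, valid in characteristic zero, between linearly reductive complex algebraic groups and compact Lie groups, together with Weyl's unitarian trick. Throughout I write $\sigma(g)\coloneqq\overline{g}^{\tp}$, so that $G$ is symmetric precisely when $\sigma(G)=G$, and I set $K\coloneqq G\cap\U(n)$. Since linear reductivity is invariant under conjugation $G\mapsto PGP^{-1}$, and so is the property of having a conjugate that is symmetric, I may replace $G$ by a conjugate whenever convenient.

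\emph{Symmetric $\Rightarrow$ linearly reductive.} Assume $\sigma(G)=G$. First I record that $K$ is a compact real Lie subgroup of $G$: the algebraic group $G$ is Zariski-closed, hence closed in the Euclidean topology, so $K$ is a closed subgroup of the compact group $\U(n)$. Differentiating the diffeomorphism $\sigma|_G\colon G\to G$ at the identity shows that the complex Lie algebra $\mathfrak{g}\subseteq\mathfrak{gl}_n(\CC)$ is stable under the conjugate-linear involution $X\mapsto\overline{X}^{\tp}$; splitting into $(\pm1)$-eigenspaces gives $\mathfrak{g}=\mathfrak{k}\oplus i\mathfrak{k}$, where $\mathfrak{k}=\{X\in\mathfrak{g}:\overline{X}^{\tp}=-X\}$ is the Lie algebra of $K$. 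Thus $\mathfrak{g}$ is the complexification of $\mathfrak{k}$, so the Zariski closure $\overline{K}^{\,\mathrm{zar}}$ is a closed subgroup of $G$ whose Lie algebra is a complex subalgebra of $\mathfrak{g}$ containing $\mathfrak{k}$, hence equal to $\mathfrak{g}$; invoking the polar decomposition $G=K\cdot\exp(i\mathfrak{k})$ (so that $K$ meets every connected component of $G$) I conclude $\overline{K}^{\,\mathrm{zar}}=G$, i.e. $K$ is Zariski-dense in $G$. Now the unitarian trick applies: given a finite-dimensional regular representation $G\to\GL(\mathbb{W})$, averaging an arbitrary Hermitian form over $K$ against Haar measure yields a $K$-invariant inner product on $\mathbb{W}$; for any $G$-submodule $\mathbb{W}'\subseteq\mathbb{W}$ its orthogonal complement is $K$-stable, and since the stabiliser of a subspace in $\GL(\mathbb{W})$ is Zariski-closed it contains $\overline{K}^{\,\mathrm{zar}}=G$, so $\mathbb{W}=\mathbb{W}'\oplus(\mathbb{W}')^{\perp}$ as $G$-modules. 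Hence every regular representation of $G$ is completely reducible.

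\emph{Linearly reductive $\Rightarrow$ symmetric after conjugation.} Here I use that a linearly reductive complex algebraic group is reductive, hence contains a maximal compact subgroup $K$ that is Zariski-dense --- the classical compact-real-form statement, exemplified by $\U(n)\subset\GL_n(\CC)$, $\SO_n(\mathbb{R})\subset\SO_n(\CC)$, and so on. Applying the unitarian trick to the given faithful action on $\CC^n$ produces a $K$-invariant Hermitian inner product; picking $P\in\GL_n(\CC)$ that carries it to the standard one gives $PKP^{-1}\subseteq\U(n)$, and since $\U(n)$ is stable under $g\mapsto\overline{g}^{\tp}$ (there $\overline{g}^{\tp}=g^{-1}$) we get $\sigma(PKP^{-1})=PKP^{-1}$. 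The remaining observation is that $\sigma$ is a homeomorphism of $\GL_n(\CC)$ for the Zariski topology: for any ideal $I$ one has $\sigma^{-1}(V(I))=V(\{\overline{f}(x^{\tp}):f\in I\})$, which is Zariski-closed, and $\sigma^{2}=\Id$; therefore $\sigma$ commutes with passing to Zariski closures, whence
\[
\sigma(PGP^{-1})=\sigma\!\left(\overline{PKP^{-1}}^{\,\mathrm{zar}}\right)=\overline{\sigma(PKP^{-1})}^{\,\mathrm{zar}}=\overline{PKP^{-1}}^{\,\mathrm{zar}}=PGP^{-1},
\]
so $PGP^{-1}$ is symmetric.

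I expect the main obstacle to be the structure-theoretic input rather than the averaging argument: namely, the Zariski-density of the maximal compact subgroup of a reductive group, and in the reverse direction the fact that $\sigma(G)=G$ forces $K=G\cap\U(n)$ to be Zariski-dense in $G$, with the disconnected case --- controlling $\pi_0(G)$ via $\pi_0(K)$ through the polar decomposition --- being the genuinely delicate point. The remaining ingredients are either direct computations (that $\mathfrak{g}$ is $\sigma$-stable, that $\sigma$ is a Zariski homeomorphism) or the standard unitarian-trick averaging.
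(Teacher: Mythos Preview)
The paper does not supply a proof of this lemma; it is quoted directly from Wallach's book (Lemma~3.5 and Theorem~3.13 there), so there is no in-paper argument to compare against. Your sketch follows the standard route found in that reference: for the symmetric-implies-reductive direction, Zariski density of $K=G\cap\U(n)$ combined with the unitarian trick; for the converse, conjugate a Zariski-dense compact subgroup into $\U(n)$ and pass to Zariski closures using that $g\mapsto\overline{g}^{\tp}$ is a Zariski homeomorphism. You have correctly isolated the two nontrivial structural inputs---the existence of a Zariski-dense compact subgroup inside a linearly reductive group, and the polar decomposition (the paper's Theorem~\ref{thm: polar decomposition}, itself cited from Wallach) to control components in the forward direction---and the remaining steps are, as you say, routine computations.
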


\begin{theorem}[Polar decomposition]\label{thm: polar decomposition}\cite[Theorem 2.12]{wallach2017geometric}
Let $G \subseteq \GL_n(\CC)$ be a symmetric algebraic group and let $K \coloneqq G\cap U_n$ where $U_n$ is the group of $n\times n$  unitary matrices.  We denote by $\mathfrak{k}$ the Lie algebra of $K$.  Then the map $p: K \times i \mathfrak{k} \to G$ defined by $p(k,  Z) \coloneqq k \exp (Z)$ is a diffeomorphism.  In particular,  $K$ is connected if and only if $G$ is an irreducible algebraic variety.
\end{theorem}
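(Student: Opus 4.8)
The plan is to deduce the statement from the classical polar decomposition of $\GL_n(\CC)$ together with a single argument that exploits the algebraicity of $G$. Recall that the map $\Phi\colon U_n\times\mathcal{H}_n\to\GL_n(\CC)$, $\Phi(u,H)=u\exp(H)$, is a diffeomorphism, where $\mathcal{H}_n$ denotes the real vector space of Hermitian $n\times n$ matrices: this is the usual factorisation $g=u\cdot(\text{positive definite Hermitian})$ combined with the fact that $\exp$ restricts to a diffeomorphism from $\mathcal{H}_n$ onto the positive definite Hermitian matrices. Since $p=\Phi|_{K\times i\mathfrak{k}}$ and $K\times i\mathfrak{k}$ is visibly a closed submanifold of $U_n\times\mathcal{H}_n$, it will suffice to prove $\Phi^{-1}(G)=K\times i\mathfrak{k}$ as subsets; the diffeomorphism claim then follows by restricting $\Phi$. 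Before doing so I would record two facts about $\mathfrak{g}=\operatorname{Lie}(G)\subseteq\mathfrak{gl}_n(\CC)$. First, because $G$ is Zariski closed over $\CC$ it is a complex Lie subgroup, so $\mathfrak{g}$ is stable under multiplication by $i$. Second, differentiating the curve $t\mapsto(\exp tX)^{\ast}=\exp(tX^{\ast})$ (with $X^{\ast}=\overline{X}^{\tp}$) shows that $G^{\ast}=G$ forces $X^{\ast}\in\mathfrak{g}$ whenever $X\in\mathfrak{g}$. Splitting $\mathfrak{g}$ into the $\pm1$ eigenspaces of $X\mapsto X^{\ast}$ and using $i$-stability yields $\mathfrak{g}=\mathfrak{k}\oplus i\mathfrak{k}$, where $\mathfrak{k}=\{X\in\mathfrak{g}:X^{\ast}=-X\}=\mathfrak{g}\cap\mathfrak{u}_n=\operatorname{Lie}(K)$; in particular $i\mathfrak{k}\subseteq\mathfrak{g}$.

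For the inclusion $\Phi^{-1}(G)\subseteq K\times i\mathfrak{k}$, take $g\in G$ with polar decomposition $g=u\exp(H)$. Then $g^{\ast}g=\exp(H)\,u^{-1}u\,\exp(H)=\exp(2H)$, and since $g^{\ast}\in G$ we obtain that $a\coloneqq\exp(2H)$ is a positive definite Hermitian element of $G$. The key claim, isolated in the next paragraph, is that then $a^{t}\in G$ for every $t\in\RR$, where $a^{t}$ is defined by the functional calculus. Granting it, $\exp(H)=a^{1/2}\in G$, hence $u=g\exp(-H)\in G\cap U_n=K$; moreover $t\mapsto\exp(tH)=a^{t/2}$ lies in $G$, so $H\in\mathfrak{g}$, and as $H$ is Hermitian, $iH\in\mathfrak{g}\cap\mathfrak{u}_n=\mathfrak{k}$, i.e.\ $H\in i\mathfrak{k}$. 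The opposite inclusion is immediate: $K\subseteq G$, and for $Z\in i\mathfrak{k}\subseteq\mathfrak{g}$ we have $\exp(Z)\in G$, so $k\exp(Z)\in G$. Thus $\Phi^{-1}(G)=K\times i\mathfrak{k}$, and $p$ is a diffeomorphism onto $G$. Finally $G\cong K\times i\mathfrak{k}$ with $i\mathfrak{k}\cong\RR^{\dim\mathfrak{k}}$ contractible, so $G$ is connected if and only if $K$ is; combined with the standard fact that a linear algebraic group over $\CC$ is connected in the Euclidean topology exactly when it is irreducible as a variety, this gives the last assertion.

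The main obstacle is the key claim, and it is the only place algebraicity enters. I would argue as follows. Choose $v\in U_n$ with $vav^{-1}=\diag(\lambda_1,\dots,\lambda_n)$, $\lambda_j>0$. Every matrix entry of $a^{s}$, and also $\det(a^{s})^{-1}$, is then a finite $\CC$-linear combination of exponentials $e^{s\mu}$ in which each $\mu$ is an integer combination of the \emph{real} numbers $\log\lambda_1,\dots,\log\lambda_n$. Consequently, for any $f$ in the vanishing ideal of $G$ in $\CC[\GL_n(\CC)]$, the function $s\mapsto f(a^{s})$ has the form $\sum_{j=1}^{N}c_j e^{s\nu_j}$ with $\nu_1<\dots<\nu_N$ real. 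Since $a^{m}\in G$ for all $m\in\ZZ$, this function vanishes on $\ZZ$; putting $x_j=e^{\nu_j}$ (distinct positive reals) and using the invertibility of the Vandermonde matrix $(x_j^{\,m})_{0\le m\le N-1,\ 1\le j\le N}$ forces every $c_j=0$, so $f(a^{s})\equiv0$. As $f$ was arbitrary, $a^{s}\in G$ for all $s\in\RR$. This is precisely the step that fails for a general closed subgroup — e.g.\ an irrational one-parameter subgroup of a torus — so Zariski closedness is indispensable here; the remaining points (that $\Phi$ is a diffeomorphism, that $\operatorname{Lie}(G\cap U_n)=\mathfrak{g}\cap\mathfrak{u}_n$, and that two submanifolds of a common ambient manifold that coincide as sets coincide as submanifolds) are routine.
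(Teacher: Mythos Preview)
The paper does not supply its own proof of this statement; it is quoted verbatim from \cite[Theorem~2.12]{wallach2017geometric} and used as a black box. Your argument is correct and is essentially the standard proof found in that reference: reduce to the classical polar decomposition of $\GL_n(\CC)$, and use algebraicity of $G$ via the exponential-sum/Vandermonde trick to show that a positive Hermitian $a\in G$ satisfies $a^{t}\in G$ for every real $t$.
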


A subgroup $H$ of an algebraic group $G$ is called an \emph{irreducible algebraic subgroup} if $H$ is an irreducible subvariety of $G$.
\begin{proposition}\cite[Section 4.1.2]{onishchik2012lie}\label{prop:semisimple algebraic}
For each semisimple Lie subalgebra $\mathfrak{g} \subseteq \mathfrak{gl}_n(\mathbb{F})$,  there exists an irreducible algebraic subgroup $G \subseteq \GL_n(\mathbb{F})$ such that $\mathfrak{g}$ is the Lie algebra of $G$.  Moreover,  if $\mathbb{F} = \mathbb{C}$,  then $G$ can be chosen to be connected in the Euclidean topology. 
\end{proposition}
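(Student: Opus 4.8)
The plan is to reduce everything to the case $\FF = \CC$ and there construct $G$ as the subgroup generated by a finite collection of explicit one-parameter unipotent subgroups, read off from the Serre generators of $\mathfrak{g}$. So suppose first $\FF = \CC$. Since $\mathfrak{g} \subseteq \mathfrak{gl}_n(\CC)$ is semisimple, the standard module $\CC^n$ is a faithful $\mathfrak{g}$-module. I would fix a Cartan subalgebra $\mathfrak{h}$ of $\mathfrak{g}$, a choice of simple roots $\alpha_1,\dots,\alpha_r$, and root vectors $e_i$ spanning $\mathfrak{g}_{\alpha_i}$ and $f_i$ spanning $\mathfrak{g}_{-\alpha_i}$. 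Two standard facts drive the argument: (i) the elements $e_1,\dots,e_r,f_1,\dots,f_r$ generate $\mathfrak{g}$ as a Lie algebra (Serre); and (ii) each $e_i$ is the raising element of an $\mathfrak{sl}_2$-triple inside $\mathfrak{g}$, hence acts nilpotently on $\CC^n$, and likewise each $f_i$. Because $e_i$ acts nilpotently, $t \mapsto \exp(t e_i)$ is polynomial in $t$ and is a homomorphism $(\CC,+) \to \GL_n(\CC)$; its image $U_i^{+}$ is therefore a closed one-dimensional unipotent subgroup with Lie algebra $\CC e_i$, and similarly one gets $U_i^{-}$ with Lie algebra $\CC f_i$.

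Then I would set $G \coloneqq \langle U_1^{\pm},\dots,U_r^{\pm}\rangle$. By the theorem that the subgroup of an algebraic group generated by a family of connected algebraic subgroups is again a closed connected (hence irreducible) algebraic subgroup (Borel; Humphreys), $G$ is an irreducible algebraic subgroup of $\GL_n(\CC)$, and, being an irreducible complex variety, it is connected in the Euclidean topology as well. It remains to identify the Lie algebra of $G$ with $\mathfrak{g}$. On the one hand it contains every $\CC e_i$ and $\CC f_i$ and is closed under the bracket, so it contains the subalgebra they generate, which is $\mathfrak{g}$ by (i). On the other hand, let $\widetilde{G}$ be the connected immersed Lie subgroup of $\GL_n(\CC)$ with Lie algebra $\mathfrak{g}$ (Lie's second theorem). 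Since $\CC e_i, \CC f_i \subseteq \mathfrak{g}$, each $U_i^{\pm}$ lies in $\widetilde{G}$, whence $G \subseteq \widetilde{G}$; and since $\mathfrak{g}$ is contained in the Lie algebra of $G$, the integral subgroup of $G$ tangent to $\mathfrak{g}$ is $\widetilde{G}$, whence $\widetilde{G} \subseteq G$. Therefore $G = \widetilde{G}$, so the Lie algebra of $G$ equals $\mathfrak{g}$, finishing the complex case.

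For $\FF = \RR$ I would pass to the complexification $\mathfrak{g}^{\CC} = \mathfrak{g} \oplus i\mathfrak{g} \subseteq \mathfrak{gl}_n(\CC)$, which is again semisimple; by the complex case there is an irreducible algebraic subgroup $G^{\CC} \subseteq \GL_n(\CC)$ with Lie algebra $\mathfrak{g}^{\CC}$. Since $\mathfrak{g}^{\CC}$ is stable under entrywise complex conjugation, and a connected algebraic subgroup of $\GL_n(\CC)$ is determined by its Lie algebra (any such group is generated by the image of $\exp$), the group $G^{\CC}$ is conjugation-stable, hence defined over $\RR$. Taking $G$ to be the identity component of $G^{\CC} \cap \GL_n(\RR)$, one computes, using $\mathfrak{gl}_n(\CC) = \mathfrak{gl}_n(\RR) \oplus i\,\mathfrak{gl}_n(\RR)$, that its Lie algebra is $\mathfrak{g}^{\CC} \cap \mathfrak{gl}_n(\RR) = \mathfrak{g}$, and $G$ is irreducible since its complexification is connected. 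This descent is routine and is the point at which I would appeal to the discussion of real forms recalled earlier in the paper for any missing details.

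The heavy external input is Borel's theorem that the subgroup generated by connected algebraic subgroups is closed; everything else is bookkeeping. The one point that genuinely needs care — and the reason the proposition is not vacuous — is that a priori the Lie algebra of $G$ could be strictly larger than $\mathfrak{g}$, since the Zariski closure of a Lie subgroup can have a bigger Lie algebra (as already for one-parameter subgroups of a torus); this is ruled out here precisely because $G$ is sandwiched, $G \subseteq \widetilde{G}$, inside a subgroup with Lie algebra $\mathfrak{g}$, forcing $G = \widetilde{G}$. A more economical but less self-contained alternative would be to quote directly that a semisimple linear Lie algebra is algebraic (Onishchik–Vinberg), so that $\mathfrak{g}$ is the Lie algebra of the identity component of an algebraic group whose Lie algebra it is, this component being irreducible and Euclidean-connected over $\CC$, with the real case again handled by the same descent.
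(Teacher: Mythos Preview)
The paper does not supply its own proof of this proposition; it is stated with a citation to \cite[Section~4.1.2]{onishchik2012lie} and used as a black box. Your argument is a correct, essentially self-contained proof along one of the standard routes: over $\CC$ you generate $G$ by the one-parameter unipotent subgroups attached to the Serre generators, invoke Borel's theorem that a subgroup generated by connected closed subgroups is itself closed and connected, and then pin down $\mathrm{Lie}(G)=\mathfrak{g}$ via the sandwich $G\subseteq\widetilde{G}\subseteq G$; the real case is handled by Galois descent. This is in the same spirit as the treatment in Onishchik--Vinberg, where the crux is likewise that a semisimple linear Lie algebra is algebraic because it is generated by nilpotents. One cosmetic point: your final clause ``$G$ is irreducible since its complexification is connected'' is unnecessary --- the Zariski identity component of a real algebraic group is automatically irreducible, so no appeal to the complexification is needed there.
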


We remark that the equivalence between the connectedness of $K$ and $G$ follows immediately from the diffeomorphism $K \times i \mathfrak{k} \simeq G$.  By \cite[Corollary 2.14]{wallach2017geometric} ,  the connectedness of $G$ is further equivalent to the irreducibility of $G$ as an algebraic variety.  
\begin{theorem}[Cartan decomposition]\label{thm: Cartan decomposition}\cite[Corollary 2.20]{wallach2017geometric}
Let $G$ be a connected symmetric subgroup of $\GL_n(\CC)$ and let $K$ = $G\cap U_n$ be as above. Then we have $\mathfrak{g} = \mathfrak{k} \bigoplus_{\RR} i\mathfrak{k}$.  Moreover,  If $T$ is a maximal compact torus of $K$ with Lie algebra $\mathfrak{t}$,  then $\mathfrak{t}$ is a Cartan algebra of $K$ and $ \mathfrak{k} = \Ad(K)\mathfrak{t}$.
\end{theorem}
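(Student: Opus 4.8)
The plan is to obtain both assertions by combining the polar decomposition (Theorem~\ref{thm: polar decomposition}) with the classical structure theory of connected compact Lie groups; essentially no computation is needed beyond bookkeeping. For the splitting $\mathfrak{g} = \mathfrak{k}\oplus_{\RR} i\mathfrak{k}$ I would argue as follows. Since $G$ and $U_n$ are closed subgroups of $\GL_n(\CC)$, and a closed subgroup is recovered from the one-parameter subgroups lying inside it, the Lie algebra of $K = G\cap U_n$ is $\mathfrak{k} = \mathfrak{g}\cap\mathfrak{u}_n$, i.e.\ the skew-Hermitian elements of $\mathfrak{g}$. Because $G$ is a complex algebraic group, $\mathfrak{g}$ is a complex subalgebra of $\mathfrak{gl}_n(\CC)$, so $i\mathfrak{k}\subseteq\mathfrak{g}$ and $i\mathfrak{k} = \mathfrak{g}\cap i\mathfrak{u}_n$ consists of the Hermitian elements of $\mathfrak{g}$. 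Symmetry of $G$ means the order-two map $\theta(g) = (\overline{g}^{\tp})^{-1}$ is a well-defined automorphism of $G$; its differential $X\mapsto -\overline{X}^{\tp}$ is an $\RR$-linear involution of $\mathfrak{g}$ whose $(+1)$- and $(-1)$-eigenspaces are precisely the skew-Hermitian and Hermitian parts, so $\mathfrak{g} = \mathfrak{k}\oplus_{\RR} i\mathfrak{k}$. Equivalently, and more economically, one may differentiate the diffeomorphism $p\colon K\times i\mathfrak{k}\to G$ of Theorem~\ref{thm: polar decomposition} at $(e,0)$: the differential is the real-linear map $(X,Z)\mapsto X+Z$ from $\mathfrak{k}\oplus i\mathfrak{k}$ onto $\mathfrak{g}$, and it is bijective because $\mathfrak{k}\cap i\mathfrak{k} = 0$.

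For the statement on the maximal compact torus, note first that $K = G\cap U_n$ is compact (being closed in $U_n$) and, by Theorem~\ref{thm: polar decomposition}, connected, since the connectedness of $K$ is equivalent to $G$ being an irreducible variety, which holds as $G$ is connected. Thus $K$ is a connected compact Lie group, so the Lie algebra $\mathfrak{t}$ of a maximal torus $T$ is a maximal abelian subalgebra of $\mathfrak{k}$ equal to its own centralizer, hence a Cartan subalgebra of $\mathfrak{k}$. To see $\mathfrak{k} = \Ad(K)\mathfrak{t}$, take $X\in\mathfrak{k}$: the closure of $\{\exp(tX):t\in\RR\}$ is a torus in $K$, contained in some maximal torus $T'$; by conjugacy of maximal tori in a connected compact group, $T' = kTk^{-1}$ for some $k\in K$, so $X$ lies in the Lie algebra of $T'$, which is $\Ad(k)\mathfrak{t}$. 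Hence $\mathfrak{k}\subseteq\Ad(K)\mathfrak{t}$, and the reverse inclusion is clear because $\mathfrak{t}\subseteq\mathfrak{k}$ and $\mathfrak{k}$ is $\Ad(K)$-invariant.

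The two genuinely substantive inputs are imported wholesale from general theory rather than reproved here: that a complex linear algebraic group has a complex Lie algebra (so that $i\mathfrak{k}\subseteq\mathfrak{g}$ — this is already implicit in the validity of Theorem~\ref{thm: polar decomposition}), and the conjugacy theorem for maximal tori in a connected compact Lie group. The latter is the step I would expect to be the main obstacle in a fully self-contained treatment; once it is granted, the remainder of the argument is purely organizational.
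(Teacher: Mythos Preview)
The paper does not supply its own proof of this theorem: it is stated with a citation to \cite[Corollary~2.20]{wallach2017geometric} and used as a black box in what follows. So there is no ``paper's proof'' to compare against.

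Your argument is correct and is essentially the standard one. The decomposition $\mathfrak{g}=\mathfrak{k}\oplus_{\RR} i\mathfrak{k}$ via the Cartan involution $X\mapsto -\overline{X}^{\tp}$ is exactly right; note that identifying the $(-1)$-eigenspace with $i\mathfrak{k}$ uses that $\mathfrak{g}$ is a \emph{complex} Lie subalgebra of $\mathfrak{gl}_n(\CC)$, which holds because the paper's definition of ``symmetric'' already requires $G$ to be a complex linear algebraic group. Your alternative route via differentiating the polar diffeomorphism at $(e,0)$ is equally valid and perhaps cleaner. For the second assertion, you correctly reduce to the conjugacy of maximal tori in a connected compact Lie group, which is indeed the substantive imported fact; the deduction $\mathfrak{k}=\Ad(K)\mathfrak{t}$ from it is the usual one-line argument. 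The only minor point worth flagging is that your invocation of connectedness of $K$ relies on the equivalence ``$G$ connected $\Leftrightarrow$ $G$ irreducible'' for complex algebraic groups, which the paper states just after Theorem~\ref{thm: polar decomposition}, so you are on safe ground.
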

\section{Some basic facts}
In this short section,  we establish two basic facts that will be needed in the sequel.  The first fact is about the relation between the induction and the restriction of representations.  
\begin{proposition}\label{prop: irr R,C rep}
Let $\mathfrak{g}$ (resp.  $\mathfrak{u}$) be a real (resp.  complex) semisimple Lie algebra and let $\mathfrak{g}^{\mathbb{C}}$ (resp.  $\mathfrak{u}'$) be its complexification (resp.  real form).  Suppose that $\pi: \mathfrak{g} \to \mathfrak{gl}(\mathbb{V})$ (resp.  $\rho: \mathfrak{u} \to \mathfrak{gl}(\mathbb{W})$) is a representation of $\mathfrak{g}$ (resp.  $\mathfrak{u}$).  Then we have the followings properties:
\begin{enumerate}[(a)]
\item $\Res ( \Ind(\pi) ) = \pi$ and $\Ind (\Res (\rho)) = \rho$. \label{prop: irr R,C rep:item1}
\item $\pi$ (resp.  $\rho$) is irreducible if and only if $ \Ind(\pi)$ (resp.  $\Res(\rho)$) is irreducible.\label{prop: irr R,C rep:item2}
\item There are irreducible representations $\pi_j: \mathfrak{g} \to \mathfrak{gl}(\mathbb{V}_j)$,  $1 \le j \le s$,   such that 
\[
\mathbb{V} = \bigoplus_{j=1}^s \mathbb{V}_j,\quad \pi = \prod_{j=1}^s \pi_j.
\]\label{prop: irr R,C rep:item3}
\end{enumerate}
\end{proposition}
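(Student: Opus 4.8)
The three parts are essentially standard facts about the interplay between a real Lie algebra, its complexification, and complex representations, so the plan is to prove them by direct unwinding of definitions together with the complexification–real form dictionary recalled in the Preliminaries. For part \eqref{prop: irr R,C rep:item1}, I would simply compute: given $\pi:\mathfrak g\to\mathfrak{gl}(\mathbb V)$, the induced representation $\Ind(\pi)$ sends $x_1+ix_2\mapsto \pi(x_1)+i\pi(x_2)$, and restricting back to $\mathfrak g=\{x_1+i\cdot 0\}$ recovers $\pi$ on the nose; conversely, starting from $\rho:\mathfrak u\to\mathfrak{gl}(\mathbb W)$ and a real form $\mathfrak u'$ with $\mathfrak u'^{\mathbb C}\simeq\mathfrak u$, the representation $\Res(\rho)=\rho|_{\mathfrak u'}$ has induced representation $x_1+ix_2\mapsto\rho(x_1)+i\rho(x_2)$, which agrees with $\rho$ because $\rho$ is already $\mathbb C$-linear on $\mathfrak u=\mathfrak u'\oplus i\mathfrak u'$. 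So this part is a one-line verification once the definitions are in place.

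For part \eqref{prop: irr R,C rep:item2}, the key observation is that a complex subspace $\mathbb U\subseteq\mathbb V$ is invariant under $\pi(\mathfrak g)$ if and only if it is invariant under $\Ind(\pi)(\mathfrak g^{\mathbb C})$: the "if" direction is immediate since $\pi(\mathfrak g)\subseteq\Ind(\pi)(\mathfrak g^{\mathbb C})$, and the "only if" direction follows because $\Ind(\pi)(x_1+ix_2)=\pi(x_1)+i\pi(x_2)$ preserves $\mathbb U$ as soon as $\pi(x_1)$ and $\pi(x_2)$ do — here it is crucial that $\mathbb U$ is a \emph{complex} subspace, so multiplication by $i$ stays inside it. Hence the lattices of invariant complex subspaces coincide, and irreducibility of one is equivalent to irreducibility of the other; the statement for $\rho$ versus $\Res(\rho)$ is the same assertion read in the other direction, using part \eqref{prop: irr R,C rep:item1}.

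Part \eqref{prop: irr R,C rep:item3} is complete reducibility: since $\mathfrak g$ is semisimple, so is its complexification $\mathfrak g^{\mathbb C}$, and by Weyl's theorem every finite-dimensional complex representation of $\mathfrak g^{\mathbb C}$ is completely reducible. Applying this to $\Ind(\pi)$ gives a decomposition $\mathbb V=\bigoplus_{j=1}^s\mathbb V_j$ into $\Ind(\pi)$-irreducible complex subspaces; by part \eqref{prop: irr R,C rep:item2} each $\mathbb V_j$ is then $\pi$-irreducible, and restricting $\Ind(\pi)$ back to $\mathfrak g$ via part \eqref{prop: irr R,C rep:item1} exhibits $\pi=\prod_{j=1}^s\pi_j$ with $\pi_j=\pi|_{\mathbb V_j}$ irreducible. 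One must note that semisimplicity of $\mathfrak g$ over $\mathbb R$ is exactly what is needed for $\mathfrak g^{\mathbb C}$ to be semisimple, so Weyl's theorem applies.

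The only mildly delicate point — and the place I would be most careful — is the "complex subspace" hypothesis that makes part \eqref{prop: irr R,C rep:item2} work: if one allowed real subspaces the equivalence would fail, so the argument genuinely uses that $\mathbb V$ is a complex vector space and that all invariant subspaces under consideration are complex. This is consistent with the standing convention in the paper that representations are on finite-dimensional complex vector spaces. Beyond this, the proof is a routine assembly of definitions plus Weyl's complete reducibility theorem, and I do not anticipate any real obstacle.
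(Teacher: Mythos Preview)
Your proposal is correct and follows essentially the same route as the paper: part~(a) by direct unwinding of the definitions, part~(b) by observing that the $\pi(\mathfrak g)$-invariant and $\Ind(\pi)(\mathfrak g^{\mathbb C})$-invariant complex subspaces coincide, and part~(c) by decomposing $\Ind(\pi)$ into irreducibles and restricting. You are slightly more explicit than the paper in naming Weyl's theorem and in flagging the role of the ``complex subspace'' hypothesis, but the underlying argument is the same.
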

\begin{proof}
Property \eqref{prop: irr R,C rep:item1} follows immediately from the definition.  Suppose that $\pi$ is irreducible and $0 \ne \mathbb{V}' \subseteq \mathbb{V}$ is a subspace such that $\Ind(\pi)(\mathfrak{g})(\mathbb{V}’) \subseteq \mathbb{V}$.  Then we have 
\[
\mathbb{V\subseteq} \pi (\mathfrak{g}) (\mathbb{V}') = \Res (\Ind (\pi)) (\mathfrak{g}) ) (\mathbb{V}’) = \Ind(\pi) (\mathfrak{g}) (\mathbb{V}’) \subseteq \mathbb{V}',
\]
which implies that $\Ind(\pi)$ is irreducible.  Taking $\pi = \Res(\rho)$,  we obtain that the irreducibility of $\Res(\rho)$ leads to the irreducibility of $\rho = \Ind (\Res(\rho))$.  

Conversely,  if $\rho$ is irreducible and $0 \ne \mathbb{W}' \subseteq \mathbb{W}$ is a subspace such that $\Res(\rho) (\mathfrak{u}') (\mathbb{W}') \subseteq \mathbb{W}'$,  then 
\[
\rho (\mathfrak{u}) (\mathbb{W}') = \Ind (\Res(\rho)) \left( {\mathfrak{u}'}^\mathbb{C}\right) \left( \mathbb{W}'\right) =  \Res(\rho)(\mathfrak{u}')(\mathbb{W}')  + i  \Res(\rho)(\mathfrak{u}') (\mathbb{W}') \subseteq \mathbb{W}'. 
\]
Thus,  we obtain the irreducibility of $\Res(\rho)$.  Taking $\rho = \Ind(\pi)$,  we conclude that the irreducibility of $\Ind(\pi)$ implies the irreducibility of $\pi = \Res(\Ind(\pi))$ and this completes the proof of \eqref{prop: irr R,C rep:item2}.

To prove \eqref{prop: irr R,C rep:item3},  we notice that there are irreducible representations $\rho_j: \mathfrak{g} \to \mathfrak{gl}(\mathbb{V}_j)$,  $1 \le j \le s$,  such that 
\[
\mathbb{V} = \bigoplus_{j=1}^s \mathbb{V}_j,\quad \Ind(\pi) = \prod_{j=1}^s \rho_j.
\]
For each $1 \le j \le s$,  we define $\pi_j \coloneqq \Res(\rho_j)$,  which is irreducible according to  \eqref{prop: irr R,C rep:item2}.  It is clear that $\pi = \Res(\Ind(\pi)) = \prod_{j=1}^s \Res(\rho_j)$ and this proves \eqref{prop: irr R,C rep:item3}.
\end{proof}

The second fact we will need is the generation of a semisimple Lie algebra by its Cartan subalgebra.  To achieve this,  we first establish two lemmas.
\begin{lemma}\label{cor: semisimple is symmetric}
Let $\mathfrak{g}$ be a complex semisimple Lie algebra and let $\ad: \mathfrak{g} \to \mathfrak{gl}(\mathfrak{g})$ be its adjoint representation.  There exists an irreducible linearly reductive algebraic group $G \subseteq \GL(\mathfrak{g}) $ whose Lie algebra is isomorphic to $\ad(\mathfrak{g}) \simeq \mathfrak{g}$. By a change of coordinates of $\mathfrak{g}$, one can furthermore require $G$ to be symmetric.  
\end{lemma}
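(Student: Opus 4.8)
The plan is to read off $G$ from Proposition~\ref{prop:semisimple algebraic} and then upgrade it to a symmetric group via Lemma~\ref{lem: linear reductive sym}. Since $\mathfrak{g}$ is semisimple its center is trivial, so $\ad\colon\mathfrak{g}\to\mathfrak{gl}(\mathfrak{g})$ is injective and $\ad(\mathfrak{g})$ is a semisimple Lie subalgebra of $\mathfrak{gl}(\mathfrak{g})$ isomorphic to $\mathfrak{g}$. Applying Proposition~\ref{prop:semisimple algebraic} with $\mathbb{F}=\mathbb{C}$ to $\ad(\mathfrak{g})\subseteq\mathfrak{gl}(\mathfrak{g})$ produces an irreducible (equivalently, Euclidean-connected) algebraic subgroup $G\subseteq\GL(\mathfrak{g})$ whose Lie algebra is $\ad(\mathfrak{g})$.

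The key step is to verify that this $G$ is linearly reductive. I would use that $G$ is connected: for any finite-dimensional rational representation of $G$, a linear subspace is $G$-invariant if and only if it is invariant under $\operatorname{Lie}(G)=\ad(\mathfrak{g})$, one implication by differentiation and the other because a connected Lie group is generated by the image of its exponential map. Hence the $G$-submodule lattice and the $\ad(\mathfrak{g})$-submodule lattice of any such representation coincide, and Weyl's theorem on complete reducibility for the semisimple Lie algebra $\mathfrak{g}\cong\ad(\mathfrak{g})$ then shows every finite-dimensional rational representation of $G$ is completely reducible; that is, $G$ is linearly reductive.

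Finally, Lemma~\ref{lem: linear reductive sym} furnishes $P\in\GL(\mathfrak{g})$ with $PGP^{-1}$ symmetric. Conjugation by $P$ is an isomorphism of algebraic varieties, so $PGP^{-1}$ remains irreducible and its Lie algebra is $P\ad(\mathfrak{g})P^{-1}\cong\mathfrak{g}$; reinterpreting $P$ as a change of linear coordinates on $\mathfrak{g}$ lets us take $G$ itself to be symmetric. I expect the only genuine obstacle to be the linear reductivity of $G$, which is why I would isolate the connectedness argument identifying $G$-invariant and $\mathfrak{g}$-invariant subspaces; everything else is bookkeeping about how conjugation interacts with Lie algebras and with irreducibility of varieties.
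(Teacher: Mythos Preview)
Your proposal is correct and follows essentially the same route as the paper: invoke Proposition~\ref{prop:semisimple algebraic} to produce an irreducible algebraic group with Lie algebra $\ad(\mathfrak{g})$, establish linear reductivity by passing to the Lie algebra and using Weyl's complete reducibility, and then apply Lemma~\ref{lem: linear reductive sym} to conjugate into a symmetric group. The paper phrases the linear reductivity step by decomposing the induced Lie algebra representation into irreducibles and lifting back, whereas you phrase it via the equivalence of $G$-invariant and $\mathfrak{g}$-invariant subspaces under connectedness; these are the same argument in slightly different clothing.
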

\begin{proof}
The semisimplicity of $\mathfrak{g}$ ensures that $\ad$ is faithful.  Hence we have $\mathfrak{g} \simeq \ad(\mathfrak{g}) \subseteq \mathfrak{gl}(\mathfrak{g})$. According to Proposition \ref{prop:semisimple algebraic},  there is an irreducible algebraic subgroup $G' \subseteq \GL(\mathfrak{g})$ whose Lie algebra is equal to $\ad(\mathfrak{g})$.  We claim that $G'$ is linearly reductive.  Then Lemma \ref{lem: linear reductive sym} implies the existence of $P \in \GL(\mathfrak{g})$ such that $G \coloneqq P G' P^{-1} $ is the desired group.  Therefore,  it is left to prove the claim.  Let $\rho: G \to \GL(\mathbb{V})$ be a finite dimensional rational representation of $G$.  Since the Lie algebra of $G$ is isomorphic to $\mathfrak{g}$,  $\rho$ induces a representation $\pi: \mathfrak{g}  \to \mathfrak{gl}(\mathbb{V})$.  The semisimplicity of $\mathfrak{g}$ implies $\pi = \prod_{j=1}^m \pi_j: \mathfrak{g} \to \bigoplus_{j=1}^m \mathfrak{gl}(\mathbb{V}_j)$ where $\pi_j:  \mathfrak{g} \to \mathfrak{gl}(\mathbb{V}_j)$ is an irreducible representation for each $1 \le j \le m$.  We recall that $\pi$ is induced by $\rho$,  thus $\rho$ admits a decomposition $\rho = \prod_{j=1}^m \rho_j:  G \to \prod_{j=1}^m \GL(\mathbb{V}_j)$ where $\rho_j: G \to \GL(\mathbb{V}_j)$ is a representation inducing $\pi_j$.  Moreover,  by construction $\rho_j$ is irreducible for each $1 \le j \le m$.

By choosing a basis for $\mathfrak{g}$,  we may identify $\GL(\mathfrak{g})$ with $\GL_n(\mathbb{C})$,  where $n = \dim \mathfrak{g}$.  If $G \subseteq \GL(\mathfrak{g})$ is linearly reductive,  then Lemma~\ref{lem: linear reductive sym} implies that $P G P^{-1} \subseteq  \GL(\mathfrak{g})$ is symmetric for some $P\in  \GL(\mathfrak{g})$.  As a consequence,  the change of coordinates of $\mathfrak{g}$ determined by $P$ renders $G$ a symmetric group.
\end{proof}

\begin{lemma}\label{thm: adjoint factor thru}
Let $G$ be a connected Lie group with Lie algebra $\mathfrak{g}$.  Suppose $p:  \widetilde{G} \to G$ is the universal covering of $G$ and $q: G \to G_0 \coloneqq G/Z(G)$ is the quotient map where $Z(G)$ is the center of $G$.  Then we have the following commutative diagram: 
\[
\begin{tikzcd}
\widetilde{G} \arrow{r}{\widetilde{\Ad}} \arrow[swap]{d}{p}
& \GL(\mathfrak{g}) \\
G \arrow[swap]{r}{q} & G_0 \arrow[swap]{u}{\Ad_0}
\end{tikzcd}        
\]
where $\widetilde{\Ad}$ (resp.  $\Ad_0$) is the adjoint representation of $\widetilde{G}$ (resp.  $G_0$).
\end{lemma}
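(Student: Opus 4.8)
The plan is to peel the composite $\Ad_0 \circ q \circ p$ apart one arrow at a time, invoking in each case the naturality of the adjoint representation. Recall that for any morphism of connected Lie groups $\varphi: H \to G$ and any $h \in H$, differentiating the identity $\varphi \circ c_h = c_{\varphi(h)} \circ \varphi$ at the identity (where $c_x$ denotes conjugation by $x$) gives $d\varphi \circ \Ad_H(h) = \Ad_G(\varphi(h)) \circ d\varphi$.

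First I would apply this to $\varphi = p: \widetilde{G} \to G$. Since a covering map is a local diffeomorphism, $dp: \mathrm{Lie}(\widetilde{G}) \to \mathfrak{g}$ is an isomorphism, and identifying $\mathrm{Lie}(\widetilde{G})$ with $\mathfrak{g}$ along $dp$ turns the naturality identity into $\widetilde{\Ad} = \Ad_G \circ p$, where $\Ad_G$ is the adjoint representation of $G$. Next I would apply the same identity to $\varphi = q: G \to G_0$. Because $\mathrm{Lie}(Z(G))$ equals the center of $\mathfrak{g}$, which vanishes in the setting where the lemma is used — notably when $\mathfrak{g}$ is semisimple, and this is exactly what makes the target $\GL(\mathfrak{g})$ of $\Ad_0$ in the diagram correct — the differential $dq: \mathfrak{g} \to \mathrm{Lie}(G_0)$ is again an isomorphism; identifying along $dq$, naturality now yields $\Ad_G = \Ad_0 \circ q$. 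Composing the two displays gives $\widetilde{\Ad} = \Ad_G \circ p = \Ad_0 \circ q \circ p$, which is precisely the commutativity asserted by the diagram.

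For completeness I would also record why this is the canonical factorization: for connected $G$ one has $\ker \Ad_G = Z(G)$, since $\Ad_G(g) = \Id$ forces $d(c_g)_e = \Id$, hence $c_g = \Id_G$ by connectedness of $G$, i.e. $g \in Z(G)$ (the reverse inclusion being trivial); consequently $\Ad_0$ is injective and $\Ad_G$ factors through $q$ uniquely, in agreement with the diagram. The argument is entirely routine; the only point that deserves a sentence of care is verifying $\mathrm{Lie}(G/Z(G)) \cong \mathfrak{g}$ so that $\Ad_0$ genuinely takes values in $\GL(\mathfrak{g})$, and that is where the centerlessness of $\mathfrak{g}$ (automatic in the semisimple case) is used.
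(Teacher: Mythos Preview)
Your proof is correct. You argue via the naturality of the adjoint representation under Lie group morphisms, applying it first to $p$ and then to $q$, and composing. The paper takes a different, terser route: it observes $\ker(\widetilde{\Ad}) = Z(\widetilde{G})$, verifies $\ker(q\circ p) = p^{-1}(Z(G)) = Z(\widetilde{G})$ as well, and then declares the commutativity immediate. Your naturality argument is in fact what identifies the map $G_0 \to \GL(\mathfrak{g})$ induced by $\widetilde{\Ad}$ with $\Ad_0$ itself --- a step the paper's kernel computation leaves implicit. You also make explicit the requirement that $\mathfrak{z}(\mathfrak{g}) = 0$ (so that $dq$ is an isomorphism and $\Ad_0$ genuinely takes values in $\GL(\mathfrak{g})$), which the lemma's statement does not record but which holds in the semisimple setting where it is applied. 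Both approaches are routine; yours is slightly more self-contained, while the paper's emphasizes the group-theoretic picture of $G_0$ as the common quotient $\widetilde{G}/Z(\widetilde{G}) \cong G/Z(G)$.
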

\begin{proof}
{Notice that $\ker \left( \widetilde{\Ad} \right) = Z \left( \widetilde{G} \right)$,  $\ker(p) \subseteq Z \left( \widetilde{G} \right)$ and $p\left( Z\left( \widetilde{G} \right) \right) \subseteq Z(G) = \ker(q)$.  We may conclude that $\ker(q \circ p) = p^{-1} (Z(G)) = Z\left( \widetilde{G} \right)$. The commutativity of the diagram follows immediately. }
\end{proof}
We conclude this section by deriving the generation of a semisimple Lie algebra by its Cartan subalgebra.
{\begin{proposition}\label{prop: compact form generates}
Let $G$ be a compact semisimple Lie group with Lie algebra $\mathfrak{g}$.  We fix a root system for $G$ and denote by $\mathfrak{h}$ the Cartan subalgebra of $\mathfrak{g}$.  Then we have $\mathfrak{g} = \Ad(G)(\mathfrak{h})$.
\end{proposition}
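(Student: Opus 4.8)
The plan is to reduce the statement for a compact semisimple Lie group $G$ to the classical fact that in a compact connected Lie group every element is conjugate into a fixed maximal torus. Since $\mathfrak{h}$ is the Cartan subalgebra attached to a fixed root system, it is the Lie algebra of a maximal torus $T \subseteq G$ (after choosing $T$ compatibly with the root system); this is exactly the content of the last sentence of Theorem~\ref{thm: Cartan decomposition} applied in the compact setting, where $\mathfrak{k}=\mathfrak{g}$. So the goal becomes: $\mathfrak{g} = \Ad(G)(\mathfrak{t})$ where $\mathfrak{t}=\mathfrak{h}$ is the Lie algebra of the maximal torus $T$.

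The key step is the surjectivity of the exponential map for compact connected Lie groups: since $G$ is compact and connected, $\exp\colon\mathfrak{g}\to G$ is onto, and moreover every $g\in G$ lies in some maximal torus; all maximal tori are conjugate, so $g = a\exp(X)a^{-1}$ for some $a\in G$ and $X\in\mathfrak{t}$. I would not actually need the group-level statement, only the Lie algebra one: given any $Y\in\mathfrak{g}$, the closure of the one-parameter subgroup $\overline{\{\exp(tY):t\in\RR\}}$ is a compact connected abelian subgroup, hence contained in a maximal torus $S$; since $Y$ is in the Lie algebra $\mathfrak s$ of $S$, and $S = a T a^{-1}$ for some $a\in G$ by conjugacy of maximal tori, we get $\mathfrak s = \Ad(a)(\mathfrak t)$, so $Y\in\Ad(a)(\mathfrak h)$. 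This gives $\mathfrak{g}\subseteq\Ad(G)(\mathfrak h)$, and the reverse inclusion is trivial because $\mathfrak h\subseteq\mathfrak g$ and $\mathfrak g$ is $\Ad(G)$-stable.

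The main obstacle — really the only point requiring care — is justifying that $\mathfrak{h}$, defined algebraically via the fixed root system, is genuinely the Lie algebra of a maximal torus of the compact group $G$, and that maximal tori are conjugate. Both are standard: by Theorem~\ref{thm: Cartan decomposition} (with $K=G$, so $\mathfrak k = \mathfrak g$ since a compact group sits inside $U_n$ after averaging an inner product, making it symmetric with $K=G$), a maximal compact torus $T$ has Lie algebra $\mathfrak t$ which is a Cartan subalgebra, and $\mathfrak g = \Ad(G)\mathfrak t$ is literally asserted there. One then only needs that any two choices of Cartan subalgebra of a compact semisimple Lie algebra are $\Ad(G)$-conjugate, so that the $\mathfrak t$ furnished by the theorem can be taken to be the $\mathfrak h$ of our fixed root system. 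Thus the proposition is essentially a direct application of Theorem~\ref{thm: Cartan decomposition}, after identifying $\mathfrak h$ with the Lie algebra of a maximal torus; I would phrase the write-up as: embed $G\hookrightarrow U_n$, apply Theorem~\ref{thm: Cartan decomposition} to conclude $\mathfrak g=\Ad(G)\mathfrak t$ for the maximal torus $T$, and invoke conjugacy of Cartan subalgebras to replace $\mathfrak t$ by $\mathfrak h$.
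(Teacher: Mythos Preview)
Your maximal-torus argument (any $Y\in\mathfrak g$ generates a torus, which is conjugate to $T$, hence $Y\in\Ad(a)\mathfrak h$) is correct and entirely self-contained; it is the standard textbook route and is more elementary than what the paper does. The paper instead goes through the complexification: it uses Lemma~\ref{cor: semisimple is symmetric} to realize $\mathfrak g^{\CC}$ as the Lie algebra of a symmetric irreducible algebraic group $G'\subseteq\GL(\mathfrak g^{\CC})$, sets $K=G'\cap U$, invokes uniqueness of the compact form to identify $\mathfrak k\simeq\mathfrak g$, applies Theorem~\ref{thm: Cartan decomposition} to $G'$ to get $\mathfrak k=\Ad(K)\mathfrak h_K$, and finally uses Lemma~\ref{thm: adjoint factor thru} to transfer this back to the original $G$ (since $G$ and $K$ need not be isomorphic as groups, only their Lie algebras agree). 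Your approach avoids all of this machinery.

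One caution about your proposed write-up, though: you suggest embedding $G\hookrightarrow U_n$ and applying Theorem~\ref{thm: Cartan decomposition} directly with $K=G$. As stated in the paper, that theorem is for a connected symmetric subgroup of $\GL_n(\CC)$, and ``symmetric'' there is defined only for \emph{complex linear algebraic} groups. A compact real group sitting in $U_n$ is not itself a complex algebraic subgroup of $\GL_n(\CC)$; its Zariski closure is the complexification $G^{\CC}$, and one must check $G^{\CC}\cap U_n=G$. That is exactly the detour the paper takes via Lemmas~\ref{cor: semisimple is symmetric} and~\ref{thm: adjoint factor thru}. So if you want to cite Theorem~\ref{thm: Cartan decomposition}, you either need to supply that complexification step, or simply stick with your cleaner maximal-torus argument, which needs none of it.
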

\begin{proof}
Without loss of generality,  we assume that $G$ is connected.  Since any two Cartan subalgebras of $\mathfrak{g}$ are conjugated by the action of $G$, we are free to choose the one stated in Theorem \ref{thm: Cartan decomposition}.  By Lemma~\ref{cor: semisimple is symmetric},  there is a symmetric irreducible algebraic subgroup $G' \subseteq \GL \left( \mathfrak{g}^{\mathbb{C}} \right)$ whose Lie algebra is isomorphic to $\mathfrak{g}^{\mathbb{C}}$.  Let $U \subseteq \GL \left( \mathfrak{g}^{\mathbb{C}} \right)$ be the subgroup consisting of unitary elements and let $K \coloneqq G' \cap U$.  By Theorem \ref{thm: polar decomposition},  $K$ is connected and $\mathfrak{k}^{\mathbb{C}} \simeq \mathfrak{g}^{\mathbb{C}}$.  The uniqueness of the compact form of a semisimple Lie algebra implies $\mathfrak{k} \simeq \mathfrak{g}$.  Moreover,  by Theorem~\ref{thm: Cartan decomposition} we have $\mathfrak{k} = \Ad(K)(\mathfrak{h}_K)$ where $\mathfrak{h}_K$ is the Cartan subalgebra of $\mathfrak{k}$.  Under the isomorphism $\mathfrak{k} \simeq \mathfrak{g}$,   Lemma~\ref{thm: adjoint factor thru} implies $\mathfrak{g} = \Ad(G) \mathfrak{h}$.
\end{proof}}

\section{The Cayley transform on a representation}\label{sec:Cayley}
Suppose $\mathbb{F}$ is either $\mathbb{R}$ or $\mathbb{C}$.  Let $G$ be a Lie group and let $\rho: G \to   \GL(\mathbb{V})$ be a representation of $G$.  Here $\mathbb{V}$ is a finite-dimensional complex vector space. We denote by $\mathfrak{g}$ the Lie algebra of $G$. Assume $C: \mathfrak{gl}(\mathbb{V}) \dasharrow \GL(\mathbb{V})$ is the classical Cayley transform defined by $C(u) = (1 + u)/(1 - u)$.
\begin{definition}[Cayley representation]\label{def:Cayley representation}
We say that $\rho$ is a Cayley representation if there exist a neighbourhood $V_1\subseteq \mathfrak{g}$ of $0$ and a neighbourhood $V_2 \subseteq \im \rho$\footnote{The group $\im \rho$ is called the \emph{immersive Lie subgroup} \cite[Definition 4.2]{chenweihuan2001}.} of the identity element such that $C$ is defined on $d\rho (V_1)$ and $C(d\rho(V)) \subseteq \rho(G)$.  We also say that the Cayley transform is applicable to $\rho$.
\end{definition}
Clearly,  given a matrix Lie group $G \subseteq \GL_n(\mathbb{\CC})$,  the natural inclusion $\iota: G \hookrightarrow \GL_n(\mathbb{\CC})$ is a Cayley representation if and only if $C$ is applicable to $G$ in the classical sense.  The following basic properties are well-known for the classical Cayley transform.
\begin{proposition}[Basic properties]\label{prop:basicprop}
Let $\rho:  G \to \GL(\mathbb{V})$ be a representation to which the Cayley transform is applicable.  For any $X \in \mathfrak{g}$ such that $C$ is defined at $d\rho (X)$,  we have 
\begin{enumerate}[(a)]
\item $C(-d\rho(X)) C(d\rho(X)) = \Id_{\mathbb{V}}$.  \label{prop:basicprop:itema}
\item Denote $f(X) \coloneqq C ( d\rho (X)/2 )$ and $g(X) \coloneqq \exp (d\rho(X))$.  Then $f$ is a second order approximation of $g$ near the origin.  \label{prop:basicprop:itemb}
\item If $\pi: G \to  \GL(\mathbb{W})$ is Cayley,  then $\rho \times \pi: G \to \GL(\mathbb{V}) \times \GL(\mathbb{W})$ is Cayley. \label{prop:basicprop:itemc}
\end{enumerate}
\end{proposition}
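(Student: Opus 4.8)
The plan is to verify each of the three properties directly from the defining identity $C(u) = (1+u)(1-u)^{-1}$, working at the level of the linear maps $d\rho(X) \in \mathfrak{gl}(\mathbb{V})$ rather than worrying about the group structure, since all three assertions are really statements about a single matrix and its Cayley image.

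For part \eqref{prop:basicprop:itema}, I would set $u \coloneqq d\rho(X)$ and simply compute $C(-u)C(u) = (1-u)(1+u)^{-1}(1+u)(1-u)^{-1}$, using the fact that $(1+u)$ and $(1-u)$ commute (both are polynomials in $u$) so that the middle factors cancel and the whole product telescopes to $\Id_{\mathbb{V}}$. The only subtlety is to note that $C$ being defined at $d\rho(X)$ means $1 - d\rho(X)$ is invertible, and then $1 + d\rho(X) = 2 - (1 - d\rho(X))$ so that $1 + d\rho(X)$ is invertible precisely when $2$ is not an eigenvalue, which holds on a possibly smaller neighbourhood of $0$; alternatively one observes $C(-u)$ is defined whenever $1+u$ is invertible, and on the common domain the identity is formal. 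This is routine.

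For part \eqref{prop:basicprop:itemb}, I would expand both $f(X) = C(d\rho(X)/2) = (1 + d\rho(X)/2)(1 - d\rho(X)/2)^{-1}$ and $g(X) = \exp(d\rho(X))$ as power series in $d\rho(X)$ near $0$. Using the Neumann series $(1 - v)^{-1} = 1 + v + v^2 + \cdots$ with $v = d\rho(X)/2$, one gets $f(X) = (1 + d\rho(X)/2)(1 + d\rho(X)/2 + d\rho(X)^2/4 + \cdots) = 1 + d\rho(X) + d\rho(X)^2/2 + O(d\rho(X)^3)$, which matches $\exp(d\rho(X)) = 1 + d\rho(X) + d\rho(X)^2/2 + O(d\rho(X)^3)$ through second order. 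Hence $f(X) - g(X) = O(\|X\|^3)$ as $X \to 0$, i.e. $f$ is a second order approximation of $g$; this is exactly the Pad\'e $(1,1)$ fact cited in the introduction. Again routine, just bookkeeping of the first three terms.

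For part \eqref{prop:basicprop:itemc}, the point is that the Cayley transform commutes with direct sums: if $\rho$ and $\pi$ are both Cayley, choose neighbourhoods $V_1^\rho, V_1^\pi \subseteq \mathfrak{g}$ of $0$ and set $V_1 \coloneqq V_1^\rho \cap V_1^\pi$. On $V_1$, since $d(\rho \times \pi)(X) = d\rho(X) \oplus d\pi(X)$ acts blockwise, we have $1 - d(\rho\times\pi)(X) = (1 - d\rho(X)) \oplus (1 - d\pi(X))$, which is invertible on $V_1$, and $C(d(\rho\times\pi)(X)) = C(d\rho(X)) \oplus C(d\pi(X)) \in \rho(G) \times \pi(G) = (\rho\times\pi)(G)$. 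The main obstacle — insofar as there is one — is purely a matter of being careful with the domains: one must intersect the two neighbourhoods on which $C$ is defined and lands in the respective image subgroups, and check that the product of the two image subgroups is exactly the image of the product representation, which is immediate from the definition of $\rho \times \pi$. No genuinely hard step arises in any of the three parts.
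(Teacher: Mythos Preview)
Your proposal is correct and follows essentially the same approach as the paper: parts \eqref{prop:basicprop:itema} and \eqref{prop:basicprop:itemc} are dispatched by the paper with the single remark ``one can verify by definition,'' which is exactly what you spell out, and for part \eqref{prop:basicprop:itemb} the paper likewise sets $u = d\rho(X)$ and expands $C(u/2) - \exp(u)$ via the Neumann series to obtain $O(X^3)$. The only difference is the level of detail---you write out the first few terms explicitly while the paper records the full series $C(u/2) = 1 + u\sum_{k\ge 0} u^k/2^k$---but the argument is the same.
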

\begin{proof}
One can verify \eqref{prop:basicprop:itema} and \eqref{prop:basicprop:itemc} by definition.  If we denote $u \coloneqq d\rho(X)$,  then \eqref{prop:basicprop:itemb} follows from
\[
f(X) - g(X) = C\left( \frac{u}{2} \right) - \exp(u) =  1 +u  \sum_{k=0}^{\infty}\frac{u^k}{2^k} - \sum_{k =0}^{\infty}\frac{u^k}{k!}= O(X^3).  \qedhere
\]
\end{proof}

We remark that $\rho(G)$ is a subgroup of $\GL(\mathbb{V})$,  but it is not necessarily a Lie subgroup.  For instance,  we consider the representation of $\mathbb{R}$ on $\mathbb{C}^2$ defined by 
\[
\rho_\alpha: \mathbb{R} \to \GL(\mathbb{C}^2),\quad \rho_{\alpha}(t) = (\exp(it), \exp(i \alpha t)),
\]
where $\alpha$ is a fixed real number.  Clearly,  $\rho_{\alpha}(\mathbb{R})$ is a Lie subgroup of $\GL(\mathbb{R}^2)$ if and only if $\alpha \in \QQ$.  However,  it is clear from the definition that $d\rho (\mathfrak{g})$ is a Lie subalgebra of $\mathfrak{gl}(\mathbb{V})$.  The following proposition indicates that the applicability of the Cayley transform only depends on $\mathfrak{g}$ and its representation $d\rho: \mathfrak{g} \to \mathfrak{gl}(\mathbb{V})$.  

\begin{proposition}[Independence]\label{prop: group to Lie algebra}
Let $G$ be a Lie group and let $\pi: G_0 \to G$ be the universal cover of $G$.  A representation $\rho: G\to \GL(\mathbb{V})$ is Cayley if and only if $\rho \circ \pi: G_0 \to \GL(\mathbb{V})$ is Cayley.  
\end{proposition}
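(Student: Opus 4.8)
The plan is to observe that every ingredient entering Definition~\ref{def:Cayley representation} --- the Lie algebra $\mathfrak{g}$, the infinitesimal representation $d\rho$, the subgroup $\rho(G)\subseteq\GL(\mathbb{V})$, and the immersive Lie subgroup $\im\rho$ --- is left unchanged when $\rho$ is replaced by $\rho\circ\pi$, so that the applicability conditions for $\rho$ and for $\rho\circ\pi$ are word-for-word the same.

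First I would record the standard facts about the covering $\pi\colon G_0\to G$. Being a covering homomorphism, $\pi$ is a local diffeomorphism, so its differential at the identity $d\pi\colon\mathfrak{g}_0\to\mathfrak{g}$ is an isomorphism of Lie algebras; I identify $\mathfrak{g}_0$ with $\mathfrak{g}$ along $d\pi$, and then the chain rule gives $d(\rho\circ\pi)=d\rho\circ d\pi=d\rho$ as representations $\mathfrak{g}\to\mathfrak{gl}(\mathbb{V})$. Moreover $\pi$ is surjective onto the identity component of $G$ --- all of $G$ in the connected setting in which a universal cover is defined --- so $(\rho\circ\pi)(G_0)=\rho(G)$ as subsets of $\GL(\mathbb{V})$. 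Finally, the immersive Lie subgroup attached to a morphism is the integral subgroup of $\GL(\mathbb{V})$ tangent to the image of its infinitesimal representation; since $d(\rho\circ\pi)(\mathfrak{g})=d\rho(\mathfrak{g})$, these integral subgroups coincide, hence $\im(\rho\circ\pi)=\im\rho$ as immersed Lie subgroups.

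With this dictionary in hand the equivalence is immediate. By Definition~\ref{def:Cayley representation}, $\rho$ is Cayley iff there is a neighbourhood $V_1$ of $0$ in $\mathfrak{g}$ on which $C\circ d\rho$ is defined and which it maps into a neighbourhood $V_2$ of the identity inside $\im\rho\subseteq\rho(G)$. Transporting $V_1$ to $\mathfrak{g}_0$ by $(d\pi)^{-1}$ and using $d(\rho\circ\pi)=d\rho$, $(\rho\circ\pi)(G_0)=\rho(G)$ and $\im(\rho\circ\pi)=\im\rho$, the same pair $(V_1,V_2)$ witnesses that $\rho\circ\pi$ is Cayley; the reverse implication is identical. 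The only point requiring a word of care is the bookkeeping around connectedness: if one allows disconnected $G$, then $\pi$ covers only $G^{\circ}$, but after shrinking $V_1$ to be connected the set $C(d\rho(V_1))$ lies in the identity component, and $\rho(G^{\circ})=(\rho\circ\pi)(G_0)$, so $C(d\rho(V_1))\subseteq\rho(G)$ remains equivalent to $C(d\rho(V_1))\subseteq(\rho\circ\pi)(G_0)$ and the argument goes through verbatim. I expect no genuine obstacle here: the proposition is precisely the assertion that being Cayley is an invariant of the infinitesimal representation $d\rho$, which is exactly what licenses the passage from groups to Lie algebras carried out in the subsequent sections.
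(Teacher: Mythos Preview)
Your argument is correct and follows essentially the same approach as the paper's proof: both observe that $d\pi$ identifies the Lie algebras, so $d(\rho\circ\pi)=d\rho$, and that $\pi(G_0)=G$, so $(\rho\circ\pi)(G_0)=\rho(G)$, making the Cayley conditions for $\rho$ and $\rho\circ\pi$ literally identical. Your treatment is simply more explicit about the immersive subgroup $\im\rho$ and the connectedness bookkeeping, which the paper leaves implicit.
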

\begin{proof}
The 'only if' part follows immediately from the definition.  We notice that $G_0$ and $G$ have the same Lie algebra $\mathfrak{g}$.  If $\rho \circ \pi$ is Cayley,  then we have $C ( d (\rho \circ \pi) (V) ) \subseteq (\rho \circ \pi) (G_0) $ from some open neighbourhood $V$ of $0 \in \mathfrak{g}$.  Since $\pi$ is a covering map,  we have $d\pi  = \Id_{\mathfrak{g}}$ and $\pi(G_0) = G$.  This implies $C (d \rho (V)) \subseteq \rho(G)$.
\end{proof}

We fix a norm on $\mathbb{V}$ and denote by $\lVert x \rVert$ the corresponding operator norm of $x \in \mathfrak{gl}(\mathbb{V})$.

\begin{lemma} \label{crit:1}
For any $u \in  d\rho (\mathfrak{g})$ with $\lVert u \rVert <  \frac{1}{3}$,  $C(u)$ is defined.  Moreover,  we have $C(u) \in \rho(G)$ if and only if  $\log(1+u)-\log(1-u) \in  d\rho (\mathfrak{g})$.
\end{lemma}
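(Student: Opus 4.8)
The plan is to reduce the whole statement to the single operator identity $\exp\bigl(\log(1+u)-\log(1-u)\bigr)=C(u)$, and then to read off the equivalence by transporting it between $\rho(G)$ and $d\rho(\mathfrak{g})$ along the exponential map; the hypothesis $\lVert u\rVert<\tfrac13$ enters only to keep every series convergent and to keep $\exp$ injective on the neighbourhood of $0$ where we work.

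First, $C(u)$ is defined: since $\lVert u\rVert<\tfrac13<1$ the Neumann series gives $(1-u)^{-1}=\sum_{k\ge 0}u^{k}$, and $C(u)-1=2u(1-u)^{-1}$ has norm at most $2\lVert u\rVert/(1-\lVert u\rVert)<1$. Set $w:=\log(1+u)-\log(1-u)$, each logarithm being its convergent power series; collecting terms gives $w=2\sum_{k\ge 0}u^{2k+1}/(2k+1)$, so $\lVert w\rVert\le 2\operatorname{artanh}(\lVert u\rVert)<2\operatorname{artanh}(\tfrac13)=\log 2$. Every series here is a power series in the single operator $u$, hence lies in the commutative Banach algebra generated by $u$; substituting $u$ into the scalar identity $\exp(\log(1+z)-\log(1-z))=(1+z)/(1-z)$ (valid for $|z|<1$) therefore yields $\exp(w)=C(u)$. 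Since in addition $\lVert C(u)-1\rVert<1$ and $\lVert w\rVert<\log 2$, the exponential is a diffeomorphism, in particular injective, on $\{Z\in\mathfrak{gl}(\mathbb{V}):\lVert Z\rVert<\log 2\}$, and $w$ is the principal logarithm of $C(u)$; writing out these convergence estimates is routine.

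Granting $\exp(w)=C(u)$, the "if" direction is immediate: if $w=d\rho(X)$ with $X\in\mathfrak{g}$ then, by naturality $\rho\circ\exp_G=\exp\circ d\rho$, we get $C(u)=\exp(d\rho(X))=\rho(\exp_G X)\in\rho(G)$. For the "only if" direction suppose $C(u)\in\rho(G)$; replacing $G$ by its universal cover via Proposition \ref{prop: group to Lie algebra} we may assume $G$ connected, so $\rho(G)=\im\rho$ is the immersive Lie subgroup with Lie algebra $d\rho(\mathfrak{g})$. The mechanism is that near $I$ the group $\rho(G)$ is the $\exp$-image of a neighbourhood of $0$ in $d\rho(\mathfrak{g})$, so from $C(u)=\exp(w)\in\rho(G)$ with $\lVert w\rVert$ small one writes $C(u)=\exp(d\rho(Y))$ with $\lVert d\rho(Y)\rVert<\log 2$, and injectivity of $\exp$ on $\{\lVert Z\rVert<\log 2\}$ forces $w=d\rho(Y)\in d\rho(\mathfrak{g})$. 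I expect this last step to be the real obstacle: it needs the local geometry of $\rho(G)$ inside $\GL(\mathbb{V})$ — that a neighbourhood of $I$ in $\rho(G)$ is exactly $\rho(G)$ meeting a small ball of $\GL(\mathbb{V})$. This holds whenever $\rho(G)$ is (locally) closed, in particular when $G$ is semisimple, since then $d\rho(\mathfrak{g})$ is reductive and its analytic subgroup of $\GL(\mathbb{V})$ is closed (cf.\ Proposition \ref{prop:semisimple algebraic}) — this is the situation that matters for the rest of the paper. For a general $G$ the subgroup $\im\rho$ may fail to be embedded (as for $\rho_\alpha$ with $\alpha\notin\QQ$ in the example preceding the lemma), and one should then read "$C(u)\in\rho(G)$" as "$C(u)=\rho(\exp_G X)$ for some small $X$", after which the argument applies verbatim.
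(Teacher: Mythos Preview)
Your approach coincides with the paper's: both verify $\lVert C(u)-I\rVert<1$ from $\lVert u\rVert<\tfrac13$, identify $\log C(u)=\log(1+u)-\log(1-u)$ via power series in the commutative algebra generated by $u$, and then read off the equivalence through the $\exp$/$\log$ correspondence between a neighbourhood of $I$ in $\rho(G)$ and one of $0$ in $d\rho(\mathfrak{g})$. The subtlety you raise for non-embedded $\rho(G)$ is genuine, and the paper treats it no more carefully---it simply asserts $\log(V\cap\rho(G))=U\cap d\rho(\mathfrak{g})$ without comment---so your discussion is, if anything, more scrupulous than the original.
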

\begin{proof}
We recall that $\exp: \mathfrak{gl}(\mathbb{V}) \to \GL(\mathbb{V})$ is a homeomorphism between a neighbourhood $U$ of $0 \in \mathfrak{gl}(\mathbb{V})$ and a neighbourhood $V$ of $\Id \in \GL(\mathbb{V})$.  Conversely,  the map $\log: \GL(\mathbb{V}) \dasharrow \mathfrak{gl}(\mathbb{V})$ is defined for $a \in \GL(\mathbb{V})$ such that $\lVert a - \Id \rVert < 1$,  and it is the inverse map of $\exp$ whenever its defined.  Hence we may choose $V = \{a\in \GL(\mathbb{V}): \lVert a - \Id \rVert <1 \}$ and $U = \log (V)$.  

Since $\| u \| < \frac{1}{3}$,  $C(u)$ is defined by definition.  Moreover,  we have $\left\|  C(u) - \Id \right\|< 1$.  This implies that $C(u) \in V$ and $\log$ is defined at $C(u)$.  Thus,  we obtain $C(u) \in \rho(G)$ if and only if $\log(1+u)-\log(1-u) = \log(C(u)) \in  \log(V \cap \rho(G))  = U \cap d\rho (\mathfrak{g}) \subseteq d\rho(\mathfrak{g})$.
\end{proof}
By the proof of Lemma~\ref{crit:1},  the bound $1/3$ in Lemma \ref{crit:1} is independent of the choice of the norm on $\mathbb{V}$. To characterize representations of $G$ to which the Cayley transform is applicable,  we need the definition that follows.

\begin{definition}[Power span property]\label{cond:span}
Let $\mathfrak{u}$ be a Lie subalgebra of $\mathfrak{gl}(\mathbb{V})$.  An element $u \in \mathfrak{u}$ has the power span property if $u^{2k+1} \in \mathfrak{u}$ for any  integer $k \ge 0$.  A linear subspace $\mathbb{W} \subseteq \mathfrak{u}$ has the power span property if $x^{2k+1} \in \mathbb{W}$ for any element $x\in \mathbb{W}$ and any integer $k\ge 0$.  
\end{definition}
\begin{lemma}\label{cor: power span}
Suppose $u \in  d\rho (\mathfrak{g})$ and $\|u\| < \frac{1}{3}$.  If $u$ has the power span property,  then $C(u) \in \rho(G)$.
\end{lemma}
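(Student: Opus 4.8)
The plan is to reduce the assertion to the criterion of Lemma~\ref{crit:1} and then exploit the fact that only odd powers of $u$ survive in the relevant logarithmic series. First I would invoke Lemma~\ref{crit:1}: since $\lVert u\rVert<\tfrac13$, the transform $C(u)$ is defined, and the conclusion $C(u)\in\rho(G)$ is \emph{equivalent} to the membership $\log(1+u)-\log(1-u)\in d\rho(\mathfrak g)$. So everything comes down to showing that this particular element of $\mathfrak{gl}(\mathbb V)$ lies in the subspace $d\rho(\mathfrak g)$.

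Next I would expand the two logarithms as convergent power series (legitimate because $\lVert u\rVert<\tfrac13<1$, so $\lVert \pm u\rVert<1$):
\[
\log(1+u)-\log(1-u)=\sum_{k\ge1}\frac{(-1)^{k+1}}{k}u^{k}+\sum_{k\ge1}\frac{1}{k}u^{k}
=\sum_{k\ge1}\frac{1+(-1)^{k+1}}{k}\,u^{k}.
\]
The coefficient vanishes for every even $k$ and equals $2/k$ for every odd $k$, so
\[
\log(1+u)-\log(1-u)=2\sum_{j\ge0}\frac{u^{2j+1}}{2j+1}.
\]
Thus the series involves only odd powers of $u$. By the power span property, $u^{2j+1}\in d\rho(\mathfrak g)$ for every $j\ge0$, and since $d\rho(\mathfrak g)$ is a linear subspace, every partial sum $S_N:=2\sum_{j=0}^{N}u^{2j+1}/(2j+1)$ lies in $d\rho(\mathfrak g)$.

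Finally I would pass to the limit. The partial sums $S_N$ converge in operator norm to $\log(1+u)-\log(1-u)$ (again because $\lVert u\rVert<1$), and $d\rho(\mathfrak g)$, being a finite-dimensional subspace of $\mathfrak{gl}(\mathbb V)$, is closed in the operator-norm topology. Hence the limit $\log(1+u)-\log(1-u)$ belongs to $d\rho(\mathfrak g)$, and Lemma~\ref{crit:1} yields $C(u)\in\rho(G)$. I do not expect a genuine obstacle here; the only point that needs a word of care is the passage from partial sums to the limit, which is handled by closedness of $d\rho(\mathfrak g)$ (automatic in finite dimensions), and the verification that the even-indexed coefficients really cancel, which is the elementary computation above.
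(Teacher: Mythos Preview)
Your argument is correct and matches the paper's proof essentially verbatim: reduce via Lemma~\ref{crit:1} to showing $\log(1+u)-\log(1-u)\in d\rho(\mathfrak g)$, observe that only odd powers of $u$ appear in the series, invoke the power span property for each term, and use finite-dimensionality (hence closedness) of $d\rho(\mathfrak g)$ to pass to the limit. The only difference is cosmetic---you compute the coefficients $2/(2j+1)$ explicitly, whereas the paper merely asserts the existence of real coefficients $c_{2k+1}$.
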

\begin{proof}
By definition,  $\log(1+u)-\log(1-u) = \sum_{k=0}^{\infty} c_{2k+1} u^{2k+1}$ for some real numbers $c_1,c_3,\dots$.  Moreover,  this series is convergent.  Since $u \in d\rho (\mathfrak{g})$ has the power span property,  we have $u^{2k+1} \in d\rho (\mathfrak{g})$ for each integer $k \ge 0$.  We notice that  $d\rho (\mathfrak{g})$ is finite dimensional,  thus $\log(1+u)-\log(1-u) \in d\rho (\mathfrak{g})$ and Lemma~\ref{crit:1} implies that $C(u) \in \rho(G)$.
\end{proof}

\begin{lemma}\label{cond:4}
Let $\mathfrak{u}$ be a Lie subalgebra of $\mathfrak{gl}(\mathbb{V})$.  Then the following are equivalent:
\begin{enumerate}[(a)]
\item $\mathfrak{u}$ has the power span property. \label{cond:4:item1}
\item If $a, b \in \mathfrak{u}$ then $aba\in \mathfrak{u}$.  \label{cond:4:item2}
\item If $a, b, c \in \mathfrak{u}$,  then $abc + cba\in \mathfrak{u}$.  \label{cond:4:item3}
\end{enumerate}
\end{lemma}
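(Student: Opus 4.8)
The plan is to prove the two equivalences $(b)\Leftrightarrow(c)$ and $(a)\Leftrightarrow(b)$ separately, the only substantive step being $(a)\Rightarrow(b)$; everything else uses only the vector-space structure of $\mathfrak{u}$ or an immediate induction.

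The equivalence $(b)\Leftrightarrow(c)$ is pure polarization. For $(c)\Rightarrow(b)$, set $c=a$ in (c) to get $2aba=aba+aba\in\mathfrak{u}$, hence $aba\in\mathfrak{u}$. For $(b)\Rightarrow(c)$, apply (b) with $a$ replaced by $a+c$ and expand $(a+c)b(a+c)=aba+abc+cba+cbc$; subtracting $aba$ and $cbc$, both in $\mathfrak{u}$ by (b), leaves $abc+cba\in\mathfrak{u}$. The direction $(b)\Rightarrow(a)$ is an induction on $k$: the case $k=0$ is immediate, and if $x^{2k-1}\in\mathfrak{u}$ then $x^{2k+1}=x\cdot x^{2k-1}\cdot x\in\mathfrak{u}$ by (b) applied to $a=x$, $b=x^{2k-1}$.

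The hard part is $(a)\Rightarrow(b)$, where I would polarize the cubic map $x\mapsto x^3$ and then bring in the Lie bracket. Fix $a,b\in\mathfrak{u}$; since $a\pm b\in\mathfrak{u}$, all of $(a+b)^3$, $(a-b)^3$, $b^3$ lie in $\mathfrak{u}$ by the power span property, and expanding in the (noncommutative) associative algebra $\mathfrak{gl}(\mathbb{V})$ gives
\[
(a+b)^3-(a-b)^3-2b^3=2\bigl(a^2b+aba+ba^2\bigr)\in\mathfrak{u}.
\]
Because $\mathfrak{u}$ is a \emph{Lie} subalgebra, it is closed under brackets, so
\[
[a,[a,b]]=a^2b-2aba+ba^2\in\mathfrak{u}.
\]
Subtracting the second relation from the first yields $3aba\in\mathfrak{u}$, and dividing by $3$ (permissible since we are in characteristic zero) gives $aba\in\mathfrak{u}$, which is (b).

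I expect the only real obstacle to be spotting the right combination in $(a)\Rightarrow(b)$: polarizing $x^3$ by itself produces only $a^2b+aba+ba^2$, which is not a priori in $\mathfrak{u}$; the point is that $\mathfrak{u}$ carries more structure than a mere subspace, namely closure under $\ad$, and the double bracket $[a,[a,b]]=a^2b-2aba+ba^2$ is exactly the extra relation needed to eliminate the symmetric terms and isolate $aba$. With that identity in hand, all the remaining implications are routine substitutions.
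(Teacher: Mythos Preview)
Your proof is correct and follows essentially the same route as the paper: both isolate $a^2b+aba+ba^2\in\mathfrak{u}$ by polarizing the cubic, then subtract the double bracket $[a,[a,b]]$ to extract $3aba$. The only cosmetic difference is that the paper phrases the polarization via derivatives (taking $f(\mu)=(a+\mu b)^3$ and reading off $f'(0)$, and similarly $g(\mu)=(a+\mu c)b(a+\mu c)$ for (b)$\Rightarrow$(c)), whereas you use the equivalent finite-difference combinations $(a+b)^3-(a-b)^3-2b^3$ and $(a+c)b(a+c)-aba-cbc$.
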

\begin{proof}
Implications \eqref{cond:4:item3} $\implies$ \eqref{cond:4:item2} $\implies$ \eqref{cond:4:item1} are obvious by induction.  Next we prove \eqref{cond:4:item1} $\implies$ \eqref{cond:4:item2}.  If $a,b \in \mathfrak{u}$ and $\mathfrak{u}$ has the power span property,  then $f(\mu) \coloneqq (a+\mu b)^3 \in \mathfrak{u}$ for each $\mu \in \mathbb{F}$.  In particular,  we have $f'(0) = a^2b +aba +ba^2 \in \mathfrak{u}$.  This implies that $aba \in \mathfrak{u}$ as $3aba = f'(0) - [a,[a,b]]$.  We complete the proof by deriving \eqref{cond:4:item3} from \eqref{cond:4:item2}.  For $a,b,c\in \mathfrak{u}$ and $\mu \in \mathbb{F}$,  we consider $g(\mu) \coloneqq (a+\mu c)b(a+\mu c)$.  By \eqref{cond:4:item2},  $g(\mu) \in \mathfrak{u}$ for any $\mu \in \mathbb{F}$.  This implies that $g'(0) = abc + cba \in \mathfrak{u}$. 
\end{proof}

In the following,  we will establish the equivalence between the applicability of the Cayley transform and the power span property.  To that end,  we need the next lemma.
\begin{lemma}\label{lem:Cpsp}
Let $G$ be a Lie group and let $\rho$ be a representation of $G$.  If $\rho: G \to \GL(\mathbb{V})$ is Cayley,  then $d\rho (\mathfrak{g})$ has the power span property.
\end{lemma}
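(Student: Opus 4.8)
The plan is to show that if $\rho$ is Cayley, then every $u \in d\rho(\mathfrak g)$ satisfies $u^{2k+1} \in d\rho(\mathfrak g)$ for all $k \ge 0$. The subalgebra $d\rho(\mathfrak g)$ is a linear subspace, so it is closed under scaling; hence it suffices to prove $u^{2k+1} \in d\rho(\mathfrak g)$ for $u$ with arbitrarily small norm, say $\|u\| < 1/3$, since then for general $u$ we rescale $u = \lambda u_0$ with $\|u_0\|$ small and note $u^{2k+1} = \lambda^{2k+1} u_0^{2k+1}$.

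First I would invoke the hypothesis that $\rho$ is Cayley to get a neighbourhood $V$ of $0$ in $\mathfrak g$ with $C(d\rho(V)) \subseteq \rho(G)$; shrinking $V$, I may assume $\|d\rho(X)\| < 1/3$ for all $X \in V$. Fix $u = d\rho(X)$ with $X \in V$. By Lemma~\ref{crit:1}, the condition $C(u) \in \rho(G)$ is equivalent to $\log(1+u) - \log(1-u) \in d\rho(\mathfrak g)$. Writing out the power series,
\[
\log(1+u) - \log(1-u) = \sum_{k=0}^{\infty} c_{2k+1}\, u^{2k+1}, \qquad c_{2k+1} = \frac{2}{2k+1} > 0,
\]
so the Cayley hypothesis tells me that this particular convergent odd power series in $u$ lies in the finite-dimensional space $d\rho(\mathfrak g)$. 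The task is to extract from this single membership statement that each individual odd power $u^{2k+1}$ lies in $d\rho(\mathfrak g)$.

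The key trick is to exploit the freedom to rescale $u$. For $X \in V$ and $t$ in a small real interval around $0$, the element $tX$ also lies in $V$, and $d\rho(tX) = tu$ has norm $< 1/3$. Therefore, for all such $t$,
\[
\varphi(t) \coloneqq \log(1+tu) - \log(1-tu) = \sum_{k=0}^{\infty} c_{2k+1}\, t^{2k+1}\, u^{2k+1} \in d\rho(\mathfrak g).
\]
Since $d\rho(\mathfrak g)$ is a finite-dimensional (hence closed) linear subspace of $\mathfrak{gl}(\mathbb V)$, and $\varphi$ is a convergent power series in $t$ with coefficients $c_{2k+1} u^{2k+1}$, each Taylor coefficient $\varphi^{(2k+1)}(0)/(2k+1)! = c_{2k+1} u^{2k+1}$ must lie in $d\rho(\mathfrak g)$ as well: a limit of difference quotients of a curve valued in a closed subspace stays in that subspace, and iterating gives all derivatives at $0$. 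Since $c_{2k+1} \ne 0$, we conclude $u^{2k+1} \in d\rho(\mathfrak g)$ for every $k \ge 0$. As $u = d\rho(X)$ ranges over a neighbourhood of $0$ in $d\rho(\mathfrak g)$, and $d\rho(\mathfrak g)$ is spanned by such small elements, the power span property for all of $d\rho(\mathfrak g)$ follows, completing the proof.

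The only point requiring a little care — and the main obstacle — is the justification that, for a curve $t \mapsto \varphi(t)$ given by a convergent power series and taking values in a fixed finite-dimensional subspace $\mathfrak u = d\rho(\mathfrak g)$, every Taylor coefficient lies in $\mathfrak u$. This is standard (project onto a complement of $\mathfrak u$ and observe the projected series vanishes identically, so all its coefficients vanish), but one should state it cleanly rather than wave hands; the finite-dimensionality of $d\rho(\mathfrak g)$ is exactly what guarantees closedness and makes the projection argument legitimate.
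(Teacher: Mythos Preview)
Your proof is correct and follows essentially the same route as the paper's own argument: reduce to small $u$ by scaling, use Lemma~\ref{crit:1} to obtain $\varphi(t)=\log(1+tu)-\log(1-tu)\in d\rho(\mathfrak g)$ for $t$ near $0$, and then differentiate at $t=0$ to extract each odd power $u^{2k+1}$ (the paper records $\varphi^{(2k+1)}(0)=2(2k)!\,u^{2k+1}$). Your explicit remark on why Taylor coefficients of a curve valued in a closed subspace remain in that subspace is a welcome clarification of a step the paper leaves implicit.
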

\begin{proof}
By assumption,  there exists an open neighbourhood $V$ of $0\in \mathfrak{g}$ such that $C$ is defined on $d\rho (V)$ and $C(d\rho(V)) \subseteq \rho(G)$.  By shrinking $V$ if necessary,  we may assume that $\lVert d\rho (x) \rVert < 1/2$ for any $x \in V$.  For each $x\in \mathfrak{g}$,  there exists some $c > 0$ such that $c x \in V$.  It is clear that $d\rho (x)$ has the power span property if and only if $d\rho ( cx)$ has the power span property.  Hence it is sufficient to assume that $x \in V$.  We denote $u \coloneqq d\rho (x)$.  By definition,  $C(t u) \in \rho(G)$ for any $t \in \mathbb{F}$ with $\lvert \lambda \rvert \le 1$.  Lemma~\ref{crit:1} implies that $\varphi(t) \coloneqq \log(1+t u)-\log(1- t u) \in  d\rho(\mathfrak{g})$ if $t\in \mathbb{F}$ and $\lvert t \rvert \le 1$.  Therefore,  for any integer $k \ge 0$,  we may conclude that
\[
\varphi^{(2k+1)}(0) = 2 (2k)! u^{2k+1} \in d\rho(\mathfrak{g}).  \qedhere
\]
\end{proof}

\begin{theorem}[Characterization I]\label{cond:3}
Let $G$ be a Lie group and let $\rho$ be a representation of $G$.  The following are equivalent:
\begin{enumerate}[(a)]
\item $\rho$ is a Cayley representation. \label{cond:3:item1}
\item There exists an open neighbourhood $V$ of $0 \in \mathfrak{g}$ such that $\log(1 + d\rho(x)) - \log(1 - d \rho(x)) \in d\rho (\mathfrak{g})$ for any $x \in V$. \label{cond:3:item2}
\item $d\rho(\mathfrak{g})$ has the power span property.  \label{cond:3:item3}
\item If $a, b \in  d\rho(\mathfrak{g})$,  then $aba\in  d\rho(\mathfrak{g})$.  \label{cond:3:item4}
\item If $a, b, c \in d\rho(\mathfrak{g})$,  then $abc + cba\in  d\rho(\mathfrak{g})$. \label{cond:3:item5}
\end{enumerate}
\end{theorem}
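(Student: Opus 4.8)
The plan is to close the loop of implications $(a)\Rightarrow(c)\Rightarrow(b)\Rightarrow(a)$ and to dispatch the equivalences $(c)\Leftrightarrow(d)\Leftrightarrow(e)$ separately, in each case invoking a lemma already in hand. For $(c)\Leftrightarrow(d)\Leftrightarrow(e)$ I would simply apply Lemma~\ref{cond:4} with $\mathfrak{u}=d\rho(\mathfrak{g})$; this is legitimate because, as observed just before Proposition~\ref{prop: group to Lie algebra}, $d\rho(\mathfrak{g})$ is always a Lie subalgebra of $\mathfrak{gl}(\mathbb{V})$, even when $\rho(G)$ fails to be a Lie subgroup.

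The implication $(a)\Rightarrow(c)$ is exactly Lemma~\ref{lem:Cpsp}. For $(c)\Rightarrow(b)$ I would fix a norm on $\mathbb{V}$ and take $V$ to be (an open neighbourhood of $0$ contained in) the set $\{x\in\mathfrak{g}:\lVert d\rho(x)\rVert<1/3\}$. Writing $u=d\rho(x)$ and recalling the expansion $\log(1+u)-\log(1-u)=\sum_{k\ge 0}c_{2k+1}u^{2k+1}$, which converges for $\lVert u\rVert<1$, the power span property of $d\rho(\mathfrak{g})$ puts each partial sum in $d\rho(\mathfrak{g})$; since $d\rho(\mathfrak{g})$ is finite-dimensional, hence closed, the limit lies in $d\rho(\mathfrak{g})$ as well. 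This is the argument already carried out in the proof of Lemma~\ref{cor: power span}. Finally, for $(b)\Rightarrow(a)$ I would shrink $V$ if necessary so that $\lVert d\rho(x)\rVert<1/3$ for $x\in V$; then $C$ is defined on $d\rho(V)$, and Lemma~\ref{crit:1} converts the hypothesis $\log(1+d\rho(x))-\log(1-d\rho(x))\in d\rho(\mathfrak{g})$ into $C(d\rho(x))\in\rho(G)$ for every $x\in V$, which is precisely the assertion that $\rho$ is a Cayley representation.

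There is no genuinely hard step: the theorem is an assembly of Lemmas~\ref{crit:1}, \ref{cor: power span}, \ref{cond:4}, and \ref{lem:Cpsp}. The only points that require a little care are the bookkeeping with neighbourhoods — ensuring that the $V$ produced in each implication can be taken inside the region $\lVert d\rho(x)\rVert<1/3$ where Lemma~\ref{crit:1} is valid — and the observation that the finite-dimensionality of $d\rho(\mathfrak{g})$ is exactly what makes the infinite series in $(c)\Rightarrow(b)$ land back inside the subalgebra. I would present the proof in the order above; alternatively one can shortcut $(c)\Rightarrow(a)$ directly through Lemma~\ref{cor: power span}, but routing through $(b)$ has the advantage of establishing $(b)$ along the way.
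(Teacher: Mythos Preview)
Your proposal is correct and matches the paper's own proof almost exactly: the paper likewise derives the equivalences $(c)\Leftrightarrow(d)\Leftrightarrow(e)$ from Lemma~\ref{cond:4}, obtains $(a)\Rightarrow(c)$ from Lemma~\ref{lem:Cpsp}, and closes the loop using Lemmas~\ref{crit:1} and~\ref{cor: power span}. The only cosmetic difference is that the paper states $(a)\Leftrightarrow(b)$ directly via Lemma~\ref{crit:1} and then $(c)\Rightarrow(a)$ via Lemma~\ref{cor: power span}, whereas you route $(c)\Rightarrow(b)\Rightarrow(a)$; as you yourself note, this is the same argument with the intermediate step made explicit.
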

\begin{proof}
The equivalence between \eqref{cond:3:item1} and \eqref{cond:3:item2} is a direct consequence of Lemma~\ref{crit:1},  while the equivalences among \eqref{cond:3:item3}--\eqref{cond:3:item5} follow from Lemma~\ref{cond:4}.  Moreover,  Lemma~\ref{lem:Cpsp} implies \eqref{cond:3:item1} $\implies$ \eqref{cond:3:item3}.  Lastly,  if \eqref{cond:3:item3} holds,  then Lemma~\ref{cor: power span} indicates that $C(u) \in \rho(G)$  for any $u\in d\rho(\mathfrak{g})$ with $\lVert u \rVert < 1/2$,  from which we may conclude that \eqref{cond:3:item1} holds.
\end{proof}

We recall that for an abelian Lie algebra $\mathfrak{g}$,  its universal enveloping algebra $\mathcal{U}(\mathfrak{g})$ is isomorphic to $S(\mathfrak{g}) \coloneqq \bigoplus_{n=0}^\infty S^n(\mathfrak{g})$,  where $S^n (\mathfrak{g}) \coloneqq \spn_\FF\left\{\prod_{k=1}^{n} u_i: u_i \in \mathfrak{g},  1\le  i \le n \right\}$ is the $n$-th symmetric power of $\mathfrak{g}$.  Thus,  we may simply identify the equivalence class $\overline{a \otimes b \otimes c} \in \mathcal{U}(\mathfrak{g})$ with $abc \in S^3(\mathfrak{g})$ for $a,b,c\in \mathfrak{g}$.  
\begin{proposition}[Characterization II]\label{prop:applicability2}
Let $G$ be an abelian Lie group and let $\mathfrak{g}$ be its Lie algebra.  Suppose $\rho: G \to \GL(\mathbb{V})$ is a representation of $G$.  Then the followings are equivalent:
\begin{enumerate}[(a)]
\item $\rho$ is a Cayley representation. \label{prop:applicability2:item1}
\item $\widetilde{d\rho} \left( S^3(\mathfrak{g}) \right) \subseteq d\rho(\mathfrak{g})$. \label{prop:applicability2:item2}
\item $\widetilde{d\rho}\left( S^{2k+1}(\mathfrak{g}) \right) \subseteq d\rho(\mathfrak{g})$ for any integer $k\ge 0$. \label{prop:applicability2:item3}
\end{enumerate}
Here $\widetilde{d\rho}:\mathcal{U} (\mathfrak{g})\simeq S(\mathfrak{g}) \to \mathfrak{gl}(\mathbb{V})$ is the algebra homomorphism induced by $d\rho$.
    \end{proposition}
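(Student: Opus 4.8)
The plan is to establish the cycle \eqref{prop:applicability2:item1} $\Rightarrow$ \eqref{prop:applicability2:item3} $\Rightarrow$ \eqref{prop:applicability2:item2} $\Rightarrow$ \eqref{prop:applicability2:item1}, using Theorem~\ref{cond:3} and Lemma~\ref{cond:4} to pass between the Cayley condition and the power span property of $d\rho(\mathfrak{g})$, and using the abelianness of $\mathfrak{g}$ to recognize ordered products of the operators $d\rho(u_i)$ as values of $\widetilde{d\rho}$. The one preliminary point to settle is precisely this last identification: since $\mathfrak{g}$ is abelian, $d\rho$ is a Lie algebra homomorphism whose image is an abelian Lie subalgebra of $\mathfrak{gl}(\mathbb{V})$, so $[d\rho(x),d\rho(y)] = d\rho([x,y]) = 0$; consequently $\mathcal{U}(\mathfrak{g}) \simeq S(\mathfrak{g})$ and the induced algebra homomorphism $\widetilde{d\rho}$ satisfies $\widetilde{d\rho}(u_1\cdots u_n) = d\rho(u_1)\cdots d\rho(u_n)$ for all $u_1,\dots,u_n\in\mathfrak{g}$, the right-hand side being independent of the order of the factors. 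In particular $\widetilde{d\rho}(u^n) = d\rho(u)^n$.

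For \eqref{prop:applicability2:item1} $\Rightarrow$ \eqref{prop:applicability2:item3}: by Theorem~\ref{cond:3}, if $\rho$ is Cayley then $d\rho(\mathfrak{g})$ is an abelian Lie subalgebra of $\mathfrak{gl}(\mathbb{V})$ with the power span property. I would first note that in this situation every product of an odd number of elements of $d\rho(\mathfrak{g})$ again lies in $d\rho(\mathfrak{g})$: for three elements this is Lemma~\ref{cond:4}\eqref{cond:4:item3} combined with commutativity, which gives $abc + cba = 2abc$, and the general odd case follows by induction on the number of factors, grouping the first three of them (which already lies in $d\rho(\mathfrak{g})$ and commutes with the rest). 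Since $S^{2k+1}(\mathfrak{g})$ is spanned by monomials $u_1\cdots u_{2k+1}$ with $u_i\in\mathfrak{g}$, applying $\widetilde{d\rho}$ yields products of $2k+1$ pairwise commuting elements of $d\rho(\mathfrak{g})$, each of which lies in the linear subspace $d\rho(\mathfrak{g})$; hence $\widetilde{d\rho}(S^{2k+1}(\mathfrak{g})) \subseteq d\rho(\mathfrak{g})$. The implication \eqref{prop:applicability2:item3} $\Rightarrow$ \eqref{prop:applicability2:item2} is immediate upon taking $k=1$.

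For \eqref{prop:applicability2:item2} $\Rightarrow$ \eqref{prop:applicability2:item1}: given $a,b\in d\rho(\mathfrak{g})$, write $a = d\rho(x)$ and $b = d\rho(y)$ with $x,y\in\mathfrak{g}$; then $aba = d\rho(x)d\rho(y)d\rho(x) = \widetilde{d\rho}(xyx) = \widetilde{d\rho}(x^2y)$ with $x^2 y\in S^3(\mathfrak{g})$, so the hypothesis forces $aba\in d\rho(\mathfrak{g})$. As $d\rho(\mathfrak{g})$ is a Lie subalgebra of $\mathfrak{gl}(\mathbb{V})$, the equivalence of \eqref{cond:4:item1} and \eqref{cond:4:item2} in Lemma~\ref{cond:4} shows that $d\rho(\mathfrak{g})$ has the power span property, and then Theorem~\ref{cond:3} gives that $\rho$ is a Cayley representation.

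I do not anticipate a genuine obstacle: the argument is a direct unwinding of the definition of $\widetilde{d\rho}$ together with the already-established Theorem~\ref{cond:3} and Lemma~\ref{cond:4}. The only places requiring care are (i) verifying that $\widetilde{d\rho}$ is well defined on the commutative algebra $S(\mathfrak{g}) \simeq \mathcal{U}(\mathfrak{g})$ and that $\widetilde{d\rho}(u_1\cdots u_n)$ is genuinely the ordered product $d\rho(u_1)\cdots d\rho(u_n)$, which is exactly where abelianness of $\mathfrak{g}$ is used, and (ii) the short induction showing that an odd product of pairwise commuting operators drawn from a subspace with the power span property stays in that subspace.
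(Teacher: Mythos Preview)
Your proof is correct and follows essentially the same route as the paper's: both arguments hinge on Theorem~\ref{cond:3} and Lemma~\ref{cond:4} to translate the Cayley condition into the power span property of $d\rho(\mathfrak{g})$, and both use the abelianness of $\mathfrak{g}$ to identify $\widetilde{d\rho}(u_1\cdots u_n)$ with the commuting product $d\rho(u_1)\cdots d\rho(u_n)$. The only difference is organizational---the paper proves \eqref{prop:applicability2:item2}$\Leftrightarrow$\eqref{prop:applicability2:item3} directly by induction on $S^{2k+1}$ and then \eqref{prop:applicability2:item1}$\Leftrightarrow$\eqref{prop:applicability2:item2}, whereas you run the cycle \eqref{prop:applicability2:item1}$\Rightarrow$\eqref{prop:applicability2:item3}$\Rightarrow$\eqref{prop:applicability2:item2}$\Rightarrow$\eqref{prop:applicability2:item1}---but the content is the same.
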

    \begin{proof}
If $\widetilde{d\rho} \left( S^3(\mathfrak{g}) \right) \subseteq d\rho(\mathfrak{g})$,  then by induction we may derive $\widetilde{d\rho}\left( S^{2k+1}(\mathfrak{g}) \right) \subseteq d\rho(\mathfrak{g})$ for any integer $k\geq 0$.  This establishes the equivalence between \eqref{prop:applicability2:item2} and \eqref{prop:applicability2:item3}.  If \eqref{prop:applicability2:item2} holds and $x,y \in \mathfrak{g}$,  then $d\rho(x)d\rho(y)d\rho(x) = \widetilde{d\rho}(xyx)\in \widetilde{d\rho} \left( S^3(\mathfrak{g}) \right)\subseteq d\rho(\mathfrak{g})$.  Theorem \ref{cond:3} implies that $\rho$ is Cayley.  Conversely,  if \eqref{prop:applicability2:item1} holds and $x,y,z\in \mathfrak{g}$,  then by Theorem \ref{cond:3} we obtain
\[
\widetilde{d\rho} (xyz)=\frac{1}{2} \widetilde{d\rho} (xyz + zyx) =\frac{1}{2}\left( d\rho (x)  d\rho (y) d\rho (z)  + d\rho (z)  d\rho (y) d\rho (x) \right) \in d\rho (\mathfrak{g}). \qedhere
\]
\end{proof}

We conclude this section by a brief discussion on the applicability of the Cayley transform under an automorphism of $G$.  Let $\Aut(G)$ be the automorphism group of $G$.  Then for each $\sigma \in \Aut(G)$ and representation $\rho: G \to \GL(\mathbb{V})$,  we have another representation $\rho^{\sigma} \coloneqq \rho \circ \sigma: G \to \GL(\mathbb{V})$.  
\begin{proposition}\label{prop:auto}
The representation $\rho^{\sigma}$ has  the following properties:
\begin{enumerate}[(a)]
\item $\rho^{\sigma}$ is irreducible if and only if $\rho$ is irreducible.  \label{prop:auto:1}
\item If $\rho = \bigoplus_{i=1}^m \rho_i$ then $\rho^{\sigma} = \bigoplus_{i=1}^m \rho_i^{\sigma}$,  where $\rho_i: G \to \GL(\mathbb{V}_i)$ and $\bigoplus_{i=1}^m \mathbb{V}_i = \mathbb{V}$. \label{prop:auto:2}
\item If $\sigma \in \Inn(G)$,  then $\rho^{\sigma}$ and $\rho$ are isomorphic. \label{prop:auto:3}
\item $\rho^{\sigma}$ is Cayley if and only if $\rho$ is Cayley.  \label{prop:auto:4}
\end{enumerate}
\end{proposition}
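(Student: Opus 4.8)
The plan is to prove all four items of Proposition~\ref{prop:auto} directly from the definitions, since each is essentially a transport-of-structure statement along the automorphism $\sigma$. Throughout, the key observation is that $\rho^\sigma$ and $\rho$ have the same image, i.e.\ $\rho^\sigma(G) = \rho(\sigma(G)) = \rho(G)$, and that at the Lie algebra level $d\rho^\sigma = d\rho \circ d\sigma$, where $d\sigma \in \Aut(\mathfrak{g})$ is a Lie algebra automorphism. In particular $d\rho^\sigma(\mathfrak{g}) = d\rho(d\sigma(\mathfrak{g})) = d\rho(\mathfrak{g})$, so $\rho^\sigma$ and $\rho$ determine the \emph{same} subalgebra of $\mathfrak{gl}(\mathbb{V})$.

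For \eqref{prop:auto:1}, a subspace $\mathbb{W} \subseteq \mathbb{V}$ is $\rho^\sigma(G)$-invariant if and only if it is $\rho(\sigma(G)) = \rho(G)$-invariant, so the lattices of invariant subspaces of $\rho$ and $\rho^\sigma$ coincide; irreducibility is preserved. For \eqref{prop:auto:2}, since $\rho = \bigoplus_i \rho_i$ means $\rho(g) = \bigoplus_i \rho_i(g)$ for all $g$, precomposing with $\sigma$ gives $\rho^\sigma(g) = \bigoplus_i \rho_i^\sigma(g)$, which is exactly the claimed decomposition. For \eqref{prop:auto:3}, if $\sigma = \mathrm{conj}_{h}$ for some $h \in G$, then $\rho^\sigma(g) = \rho(hgh^{-1}) = \rho(h)\rho(g)\rho(h)^{-1}$, so the intertwiner $\rho(h) \in \GL(\mathbb{V})$ witnesses $\rho^\sigma \cong \rho$.

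For \eqref{prop:auto:4}, the cleanest route is to invoke Theorem~\ref{cond:3}: $\rho$ is Cayley if and only if $d\rho(\mathfrak{g})$ has the power span property, and likewise $\rho^\sigma$ is Cayley if and only if $d\rho^\sigma(\mathfrak{g})$ has the power span property. But we have already noted $d\rho^\sigma(\mathfrak{g}) = d\rho(\mathfrak{g})$ as subalgebras of $\mathfrak{gl}(\mathbb{V})$, so the two conditions are literally the same. Hence $\rho^\sigma$ is Cayley if and only if $\rho$ is. I expect no real obstacle here; the only point requiring a word of care is checking that $d\sigma$ is indeed a Lie algebra automorphism of $\mathfrak{g}$ (so that $d\sigma(\mathfrak{g}) = \mathfrak{g}$), which is standard, and that Theorem~\ref{cond:3} applies to both $\rho$ and $\rho^\sigma$ since both are representations of the same Lie group $G$.
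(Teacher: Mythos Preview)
Your proof is correct and fills in precisely the kind of details the paper omits: the paper's own proof is the one-line ``All these properties can be easily verified by definition.'' Your arguments for \eqref{prop:auto:1}--\eqref{prop:auto:3} are the natural verifications one would expect, and for \eqref{prop:auto:4} your use of Theorem~\ref{cond:3} (via $d\rho^\sigma(\mathfrak{g}) = d\rho(\mathfrak{g})$) is clean and entirely legitimate since that theorem precedes the proposition; one could also argue \eqref{prop:auto:4} directly from Definition~\ref{def:Cayley representation} by transporting the neighbourhood $V_1$ through $d\sigma^{-1}$, but your route is at least as tidy.
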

\begin{proof}
All these properties can be easily verified by definition.
\end{proof}
According to Proposition~\ref{prop:auto}~\eqref{prop:auto:3},  the outer automorphism group $\Out(G) \coloneqq \Aut(G)/\Inn(G)$ acts on the representation ring $R(G)$ generated by isomorphism classes of representations of $G$.  Moreover,  by Proposition~\ref{prop:auto}~\eqref{prop:auto:4},  the applicability of the Cayley transform is stable with respect to the action of $\Out(G)$.
\begin{corollary}[Stability I]\label{cor:stability-1}
If $\rho$ is a Cayley representation,  then all representations in the $\Out(G)$-orbit of $\rho$ in $R(G)$ are Cayley. 
\end{corollary}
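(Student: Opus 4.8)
The plan is to reduce everything to the already-established results about representations themselves and then apply the two key ingredients: Proposition~\ref{prop:auto}~\eqref{prop:auto:4}, which says applicability of the Cayley transform is an invariant of each $\rho^{\sigma}$, and Proposition~\ref{prop:auto}~\eqref{prop:auto:3}, which says that inner automorphisms do not change the isomorphism class of $\rho$. First I would recall that the $\Out(G)$-orbit of the isomorphism class $[\rho]$ in $R(G)$ is well-defined precisely because of \eqref{prop:auto:3}: any two lifts $\sigma, \sigma'$ of the same element of $\Out(G)$ differ by an inner automorphism $\iota$, so $\rho^{\sigma'} = (\rho^{\sigma})^{\iota}$ is isomorphic to $\rho^{\sigma}$, and hence the class $[\rho^{\sigma}]$ depends only on the coset $\sigma \Inn(G) \in \Out(G)$.

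Next I would fix a representative $\bar\sigma \in \Aut(G)$ of an arbitrary element of $\Out(G)$ and consider the representation $\rho^{\bar\sigma}$. Since $\rho$ is assumed Cayley, Proposition~\ref{prop:auto}~\eqref{prop:auto:4} applied to the automorphism $\bar\sigma$ gives that $\rho^{\bar\sigma}$ is Cayley as well. The only remaining point is to make sure that the notion of ``being Cayley'' descends to isomorphism classes, i.e.\ that if two representations are isomorphic and one is Cayley then so is the other. This is immediate from Theorem~\ref{cond:3}: if $\rho \simeq \rho'$ via an intertwiner $T \in \GL(\mathbb{V})$, then $d\rho'(\mathfrak{g}) = T\, d\rho(\mathfrak{g})\, T^{-1}$, and the power span property (condition~\eqref{cond:3:item4}, say) is manifestly preserved under conjugation by $T$, since $T(aba)T^{-1} = (TaT^{-1})(TbT^{-1})(TaT^{-1})$. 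Therefore every representation whose class lies in the $\Out(G)$-orbit of $[\rho]$ is Cayley.

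I do not anticipate any real obstacle here; the statement is essentially a bookkeeping corollary that packages Proposition~\ref{prop:auto} together with the conjugation-invariance of the power span characterization from Theorem~\ref{cond:3}. The one subtlety worth a sentence of care is the well-definedness of the $\Out(G)$-action on $R(G)$, which I would either cite from the paragraph immediately preceding the corollary or spell out in one line using~\eqref{prop:auto:3}. So the write-up will consist of: (i) noting the action is well-defined, (ii) invoking~\eqref{prop:auto:4} for a chosen representative automorphism, and (iii) observing that Cayley-ness is an isomorphism invariant by Theorem~\ref{cond:3}.

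\begin{proof}
Let $[\rho]\in R(G)$ denote the isomorphism class of $\rho$, and let $\sigma\Inn(G)\in\Out(G)$ be an arbitrary element with representative $\sigma\in\Aut(G)$. By Proposition~\ref{prop:auto}~\eqref{prop:auto:3}, if $\sigma'=\sigma\circ\iota$ for some $\iota\in\Inn(G)$, then $\rho^{\sigma'}=(\rho^{\sigma})^{\iota}\simeq\rho^{\sigma}$, so the class $[\rho^{\sigma}]$ depends only on the coset $\sigma\Inn(G)$; hence the $\Out(G)$-action on $R(G)$ is well-defined and the orbit of $[\rho]$ consists precisely of the classes $[\rho^{\sigma}]$ with $\sigma\in\Aut(G)$.

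Since $\rho$ is Cayley, Proposition~\ref{prop:auto}~\eqref{prop:auto:4} shows that $\rho^{\sigma}$ is Cayley for every $\sigma\in\Aut(G)$. Finally, being Cayley is an isomorphism invariant: if $\rho''\simeq\rho^{\sigma}$ via $T\in\GL(\mathbb{V})$, then $d\rho''(\mathfrak{g})=T\,d\rho^{\sigma}(\mathfrak{g})\,T^{-1}$, and for $a,b\in d\rho^{\sigma}(\mathfrak{g})$ one has $T(aba)T^{-1}=(TaT^{-1})(TbT^{-1})(TaT^{-1})$, so condition~\eqref{cond:3:item4} of Theorem~\ref{cond:3} passes from $d\rho^{\sigma}(\mathfrak{g})$ to $d\rho''(\mathfrak{g})$, whence $\rho''$ is Cayley. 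Therefore every representation whose class lies in the $\Out(G)$-orbit of $[\rho]$ is Cayley.
\end{proof}
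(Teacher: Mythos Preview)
Your proposal is correct and follows the same approach as the paper: the paper does not give a separate proof of Corollary~\ref{cor:stability-1} at all, treating it as an immediate consequence of Proposition~\ref{prop:auto}~\eqref{prop:auto:3} and~\eqref{prop:auto:4} together with the preceding sentence about the $\Out(G)$-action on $R(G)$. Your write-up simply makes explicit the two points the paper leaves implicit, namely the well-definedness of the action and the fact that the Cayley property is an isomorphism invariant (which you justify cleanly via the conjugation-invariance of condition~\eqref{cond:3:item4} in Theorem~\ref{cond:3}).
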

\begin{remark}\label{rmk:spin8}
According to \cite[Section~20.3]{fulton2013representation},  $\Out(\Spin_8(\CC))$ acts on $\{\mathbb{C}^n,   S^+, S^-\}$ as the permutation group $\mathfrak{S}_3$.  Here $\mathbb{S}_+, \mathbb{S}_-$ are the two spin representations.  Since $\mathbb{C}^n$ is Cayley,  Corollary~\ref{cor:stability-1} immediately implies that $\mathbb{S}_+$ and $\mathbb{S}_-$ are also Cayley representations. 
\end{remark}
\section{The Cayley transform for semisimple Lie groups}\label{sec:semisimple}
%
%
%
%
Let $G$ be a semisimple Lie group over $\FF$ and let $\mathfrak{g}$ be its Lie algebra.  Assume $\rho: G \to \GL(\mathbb{V})$ is a representation of $G$ and $\mathfrak{h}$ is a Cartan subalgebra of $\mathfrak{g}$.  We recall that if $\mathbb{F} = \mathbb{C}$,   then 
\[
\mathfrak{g} = \mathfrak{g}_0 \bigoplus \left( \bigoplus_{\alpha\in \Phi} \mathfrak{g}_{\alpha} \right),\quad \mathfrak{g}_0 = \mathfrak{h},
\]
where $\Phi$ is the set of roots of $\mathfrak{g}$.  If $\mathfrak{g}$ is a compact real form,  then we have 
\[
\mathfrak{g} =\mathfrak{g}_0  \bigoplus \left( \bigoplus_{\alpha\in \Phi} \left( \mathfrak{g}^{\mathbb{C}}_{\alpha} \bigoplus \mathfrak{g}^{\mathbb{C}}_{-\alpha} \right) \bigcap \mathfrak{g} \right),\quad \mathfrak{g}_0 = \mathfrak{h}.
\]
Moreover,  if $\mathfrak{g}$ is a non-compact real form,  then $\mathfrak{g}$ admits a restricted root space decomposition \cite[Proposition~6.40 and Equation~(6.48b)]{Knapp02}:
\[
\mathfrak{g} = \mathfrak{g}_0 \bigoplus \left( \bigoplus_{\sigma \in \Sigma} \mathfrak{g}_{\sigma} \right),\quad \mathfrak{g}_{\mu} = \mathfrak{g} \bigcap \left(  \bigoplus_{ \alpha \in \Phi,  \; \alpha|_{\mathfrak{a}}  = \mu } \mathfrak{g}^{\mathbb{C}}_{\alpha}\right),\quad  \mu \in \Sigma \sqcup \{0\},
\]
where $\mathfrak{a}$ is some abelian subspace of $\mathfrak{g}$ and $\Sigma$ is the set of restricted roots of $\mathfrak{g}$.  In this case,  $\mathfrak{g}_0 = \mathfrak{h}$ is not necessarily true,  but it is true if $\mathfrak{g}$ is a split real form.
\begin{lemma}\label{lem: a3 to a}
Suppose that $G$ is a semisimple Lie group over $\mathbb{F}$ such that $\mathfrak{g}_0 = \mathfrak{h}$.  If $\rho$ is Cayley,  then $\widetilde{d\rho} \left( S^3\mathfrak{h}\right) \subseteq d\rho(\mathfrak{h})$. 
\end{lemma}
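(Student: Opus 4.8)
The plan is to turn the Cayley hypothesis into a power span statement via Theorem~\ref{cond:3}, apply it to Cartan elements (where commutativity does most of the work), and then reduce the remaining content to the purely Lie-algebraic fact that $\mathfrak{g}_0=\mathfrak{h}$ says $\mathfrak{h}$ is self-centralizing in $\mathfrak{g}$.

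First I would unpack the hypothesis. Since $\rho$ is Cayley, Theorem~\ref{cond:3} gives that $d\rho(\mathfrak{g})$ has the power span property, equivalently that $abc+cba\in d\rho(\mathfrak{g})$ for all $a,b,c\in d\rho(\mathfrak{g})$. Apply this with $a=d\rho(H_1)$, $b=d\rho(H_2)$, $c=d\rho(H_3)$ for $H_1,H_2,H_3\in\mathfrak{h}$. As $\mathfrak{h}$ is abelian, the operators $d\rho(H_i)$ commute pairwise, so $abc=cba=\widetilde{d\rho}(H_1H_2H_3)$ and hence $\widetilde{d\rho}(H_1H_2H_3)=\tfrac12(abc+cba)\in d\rho(\mathfrak{g})$; since such monomials span $\widetilde{d\rho}(S^3\mathfrak{h})$ over $\mathbb{F}$ and $d\rho(\mathfrak{g})$ is a subspace, this already yields $\widetilde{d\rho}(S^3\mathfrak{h})\subseteq d\rho(\mathfrak{g})$. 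Moreover $d\rho(H)$ commutes with each factor $d\rho(H_i)$, hence with their product, for every $H\in\mathfrak{h}$; so every element of $\widetilde{d\rho}(S^3\mathfrak{h})$ centralizes $d\rho(\mathfrak{h})$ in $\mathfrak{gl}(\mathbb{V})$. Thus the lemma follows once we prove: any $Y\in d\rho(\mathfrak{g})$ that commutes with $d\rho(\mathfrak{h})$ lies in $d\rho(\mathfrak{h})$.

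To prove this I would first note that $\mathfrak{g}_0=\mathfrak{h}$ forces the centralizer $Z_{\mathfrak{g}}(\mathfrak{h})$ of $\mathfrak{h}$ in $\mathfrak{g}$ to equal $\mathfrak{h}$, in each of the three cases recalled before the statement: in the complex and compact cases $\mathfrak{g}_0$ is by definition $Z_{\mathfrak{g}}(\mathfrak{h})$, while in the non-compact case $\mathfrak{g}_0=Z_{\mathfrak{g}}(\mathfrak{a})$, so $\mathfrak{a}\subseteq\mathfrak{g}_0=\mathfrak{h}$ (as $\mathfrak{a}$ is abelian) and therefore $\mathfrak{h}\subseteq Z_{\mathfrak{g}}(\mathfrak{h})\subseteq Z_{\mathfrak{g}}(\mathfrak{a})=\mathfrak{h}$. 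Next, because $\mathfrak{g}$ is semisimple the ideal $\mathfrak{k}:=\ker d\rho$ has a complementary ideal $\mathfrak{g}'$, and $d\rho|_{\mathfrak{g}'}\colon\mathfrak{g}'\to d\rho(\mathfrak{g})$ is a Lie algebra isomorphism. A short computation using $[\mathfrak{k},\mathfrak{g}']=0$ and $Z_{\mathfrak{g}}(\mathfrak{h})=\mathfrak{h}$ shows $\mathfrak{h}=(\mathfrak{h}\cap\mathfrak{k})\oplus\mathfrak{h}'$ with $\mathfrak{h}':=\mathfrak{h}\cap\mathfrak{g}'$, that $d\rho(\mathfrak{h})=d\rho(\mathfrak{h}')$, and that $Z_{\mathfrak{g}'}(\mathfrak{h}')=\mathfrak{h}'$. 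Then, transporting $Y$ through the isomorphism, $(d\rho|_{\mathfrak{g}'})^{-1}(Y)$ centralizes $\mathfrak{h}'$, so it lies in $Z_{\mathfrak{g}'}(\mathfrak{h}')=\mathfrak{h}'$, and applying $d\rho$ gives $Y\in d\rho(\mathfrak{h}')=d\rho(\mathfrak{h})$.

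The step I expect to be the main obstacle is exactly this last claim, and the subtlety it conceals is that $d\rho$ need not be faithful, so one cannot read ``commutes with $d\rho(\mathfrak{h})$'' directly off the root-space decomposition of $\mathfrak{g}$. Splitting off $\ker d\rho$ as a direct ideal summand---which is legitimate precisely because $\mathfrak{g}$ is semisimple---reduces matters to the faithful case, where $Z_{\mathfrak{g}'}(\mathfrak{h}')=\mathfrak{h}'$ closes the argument. An alternative I considered is to diagonalize $d\rho(\mathfrak{h})$ on $\mathbb{V}$, write $Y=d\rho(X)$, expand $X$ along root spaces, and run a Vandermonde argument on $(\ad H)^kX$ for a regular $H$ to force every nonzero root-space component of $X$ into $\ker d\rho$; this also works, but it needs a separate treatment of the compact case, in which $\ad H$ acts on the real root planes by rotations rather than by scalars, whereas the ideal-decomposition argument is uniform and lands directly in the $\mathbb{F}$-span $d\rho(\mathfrak{h})$.
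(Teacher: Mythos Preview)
Your argument is correct, and it takes a genuinely different route from the paper's.

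Both proofs begin the same way: the Cayley hypothesis gives the $abc+cba$ criterion for $d\rho(\mathfrak{g})$, and commutativity of $d\rho(\mathfrak{h})$ yields $\widetilde{d\rho}(S^3\mathfrak{h})\subseteq d\rho(\mathfrak{g})$. The divergence is in the refinement from $d\rho(\mathfrak{g})$ to $d\rho(\mathfrak{h})$. The paper argues via the weight-space decomposition of $\mathbb{V}$: elements of $\widetilde{d\rho}(S^3\mathfrak{h})$ preserve each $\mathbb{V}_\lambda$, whereas the root-space images $d\rho(\mathfrak{g}_\alpha)$ shift weights, so the block structure forces the root-space components to vanish. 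This is done first over $\mathbb{C}$ and then, for $\mathbb{F}=\mathbb{R}$, by passing to $\Ind(d\rho)$ and intersecting back with $d\rho(\mathfrak{g})$. Your argument replaces this representation-theoretic dissection by an intrinsic centralizer statement: products of $d\rho(H_i)$ commute with $d\rho(\mathfrak{h})$, so it suffices to show that the centralizer of $d\rho(\mathfrak{h})$ inside $d\rho(\mathfrak{g})$ equals $d\rho(\mathfrak{h})$, which you deduce from $Z_{\mathfrak{g}}(\mathfrak{h})=\mathfrak{h}$ after splitting off $\ker d\rho$ as a semisimple ideal summand. The paper's approach is more concrete and representation-theoretic but requires the $\mathbb{C}$/$\mathbb{R}$ case split; your approach is uniform over $\mathbb{F}$, makes the treatment of non-faithful $d\rho$ explicit, and bypasses weight-space bookkeeping entirely.
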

\begin{proof}
By Propositions~\ref{prop: group to Lie algebra} and \ref{prop:applicability2},  it is clear that $\widetilde{d\rho}  \left( S^3 \mathfrak{h} \right) \subseteq d\rho(\mathfrak{g})$.  Next we split our discussion into two cases: 
\begin{itemize}
\item[$\diamond$] $\FF=\CC$.  Let $\mathbb{V} = \bigoplus_{\lambda \in \Lambda} \mathbb{V}_{\lambda}$ be the weight space decomposition.  Here $\Phi$ (resp.  $\Lambda$) is the set of roots (resp.  weight lattice) of $\mathfrak{g}$.  By definition,  we have 
\[
d\rho (x) (\mathbb{V}_{\lambda}) \subseteq \begin{cases}
\mathbb{V}_{\lambda + \alpha} &\text{~if~}x \in \mathfrak{g}_{\alpha} \\
\mathbb{V}_{\lambda} &\text{~if~}x \in \mathfrak{h} \\
\end{cases}.
\]
Since $d\rho (\mathfrak{g}) = d\rho (\mathfrak{h}) \bigoplus \left( \oplus_{\alpha \in \Phi} d\rho (g_{\alpha}) \right)$ and $\widetilde{d\rho}  \left( S^3 \mathfrak{h} \right) \subseteq d\rho(\mathfrak{g})$,  we conclude that $\widetilde{d\rho}  \left( S^3 \mathfrak{h} \right) \subseteq d\rho(\mathfrak{h})$.
\item[$\diamond$] $\mathbb{F} = \mathbb{R}$.  We consider $\Ind(d\rho): \mathfrak{g}^{\mathbb{C}} \to \mathfrak{gl}(\mathbb{V})$.  Since $\mathfrak{h}^{\mathbb{C}}$ is a Cartan subalgebra of $\mathfrak{g}^{\mathbb{C}}$,  by the argument for $\mathbb{F} = \mathbb{C}$ we have 
\[
\widetilde{\Ind(d\rho)} \left( S^3  \mathfrak{h}^{\mathbb{C}} \right) \subseteq \Ind(d\rho) \left( \mathfrak{h}^{\mathbb{C}} \right).
\]
Therefore,  we obtain 
\[
\widetilde{d\rho} \left( S^3 \mathfrak{h} \right) \subseteq  \widetilde{\Ind(d\rho)} \left( S^3  \mathfrak{h}^{\mathbb{C}}  \right) \bigcap d\rho (\mathfrak{g}) \subseteq \Ind(d\rho) \left( \mathfrak{h}^{\mathbb{C}} \right) \bigcap d\rho (\mathfrak{g}).
\]
By assumption,  we have a decomposition $\mathfrak{g} = \mathfrak{h} \bigoplus \mathfrak{l}$ for some subspace $\mathfrak{l} \subseteq \mathfrak{g}$.  Thus,  $d\rho (\mathfrak{g}) = d \rho (\mathfrak{h}) \oplus d \rho (\mathfrak{l})$ and $\Ind(d\rho) \left( \mathfrak{h}^{\mathbb{C}} \right) \bigcap d\rho (\mathfrak{g}) = d\rho (\mathfrak{h})$.  \qedhere
\end{itemize}
\end{proof}
As a consequence of Lemma~\ref{lem: a3 to a},  we obtain the third characterization of the applicability of the Cayley transform.  
\begin{theorem}[Characterization III]\label{cond:simple}
Let $G$ be a semisimple Lie group over $\mathbb{F}$ such that $\mathfrak{g}_0 = \mathfrak{h}$ and $\mathfrak{g} = \Ad(G) (\mathfrak{h})$ where $\Ad: G \to \GL(\mathfrak{g})$ is the adjoint representation of $G$.  Let $\rho: G \to \GL(\mathbb{V})$ be a representation of $G$.  Then $\rho$ is Cayley if and only if $ \widetilde{d \rho}\left( S^3 \mathfrak{h} \right) \subseteq d \rho (\mathfrak{h})$. 
\end{theorem}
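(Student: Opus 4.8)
The plan is to treat the two implications separately. The ``only if'' direction is a citation: if $\rho$ is Cayley, then since the hypothesis includes $\mathfrak{g}_0 = \mathfrak{h}$, Lemma~\ref{lem: a3 to a} applies directly and yields $\widetilde{d\rho}(S^3\mathfrak{h}) \subseteq d\rho(\mathfrak{h})$ (the stronger hypothesis $\mathfrak{g} = \Ad(G)(\mathfrak{h})$ being irrelevant here).

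For the ``if'' direction, assume $\widetilde{d\rho}(S^3\mathfrak{h}) \subseteq d\rho(\mathfrak{h})$. By the equivalence of \eqref{cond:3:item1} and \eqref{cond:3:item3} in Theorem~\ref{cond:3}, it suffices to prove that $d\rho(\mathfrak{g})$ has the power span property. The mechanism is the $\Ad$-equivariance of $d\rho$, namely
\[
d\rho(\Ad(g)Y) = \rho(g)\, d\rho(Y)\, \rho(g)^{-1} \qquad (g \in G,\ Y \in \mathfrak{g}),
\]
obtained by differentiating $\rho(g\exp(tY)g^{-1}) = \rho(g)\,\rho(\exp(tY))\,\rho(g)^{-1}$ at $t=0$. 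First I would promote the hypothesis to all odd symmetric powers: since $\mathfrak{h}$ is abelian, the inductive argument in the proof of Proposition~\ref{prop:applicability2} (applied with $\mathfrak{h}$ in place of $\mathfrak{g}$) upgrades $\widetilde{d\rho}(S^3\mathfrak{h}) \subseteq d\rho(\mathfrak{h})$ to $\widetilde{d\rho}(S^{2k+1}\mathfrak{h}) \subseteq d\rho(\mathfrak{h})$ for every integer $k\ge 0$. Then, given any $u = d\rho(X) \in d\rho(\mathfrak{g})$, I use $\mathfrak{g} = \Ad(G)(\mathfrak{h})$ to write $X = \Ad(g)H$ with $g\in G$ and $H \in \mathfrak{h}$, so $u = \rho(g)\,d\rho(H)\,\rho(g)^{-1}$; since $d\rho(H)^{2k+1} = \widetilde{d\rho}(H^{2k+1})$, conjugation gives
\[
u^{2k+1} = \rho(g)\,\widetilde{d\rho}(H^{2k+1})\,\rho(g)^{-1} = \rho(g)\, d\rho(H_k)\,\rho(g)^{-1} = d\rho(\Ad(g)H_k) \in d\rho(\mathfrak{g}),
\]
where $H_k \in \mathfrak{h}$ is chosen with $\widetilde{d\rho}(H^{2k+1}) = d\rho(H_k)$ by the promoted hypothesis. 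Hence $d\rho(\mathfrak{g})$ has the power span property, and Theorem~\ref{cond:3} gives that $\rho$ is Cayley.

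As a variation, one can skip the promotion step: establish only $u^3 \in d\rho(\mathfrak{g})$ for every $u \in d\rho(\mathfrak{g})$ by the same computation with $k=1$, and then invoke the polarization argument from the proof of Lemma~\ref{cond:4} --- applied to $\mu \mapsto (a+\mu b)^3$ and using that $d\rho(\mathfrak{g})$ is a Lie subalgebra --- to deduce $aba \in d\rho(\mathfrak{g})$ for all $a,b \in d\rho(\mathfrak{g})$, which is condition \eqref{cond:3:item4} of Theorem~\ref{cond:3}.

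I do not expect a genuine obstacle. The content is entirely the reduction furnished by $\mathfrak{g} = \Ad(G)(\mathfrak{h})$ --- which holds for compact semisimple $G$ by Proposition~\ref{prop: compact form generates} --- converting the nonlinear operation of raising to an odd power on $d\rho(\mathfrak{g})$ into a $\rho(g)$-conjugate of the same operation on $d\rho(\mathfrak{h})$, where the cubic hypothesis does all the work. The only points needing care are recording the $\Ad$-equivariance identity correctly and this reduction; everything else is bookkeeping via Theorem~\ref{cond:3} and the facts established earlier.
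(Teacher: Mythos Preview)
Your proposal is correct and follows essentially the same route as the paper: the ``only if'' direction is Lemma~\ref{lem: a3 to a}, and for the ``if'' direction you write $x=\Ad(g)H$, use the $\Ad$-equivariance $d\rho(\Ad(g)H)=\rho(g)\,d\rho(H)\,\rho(g)^{-1}$, and invoke Proposition~\ref{prop:applicability2} to get $d\rho(H)^{2k+1}\in d\rho(\mathfrak{h})$, then conjugate back. The paper's proof is the same argument, stated slightly more tersely.
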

\begin{proof}
By Theorem~\ref{cond:3} and Lemma~\ref{lem: a3 to a},  it suffices to prove that $\widetilde{d \rho}\left( S^3 \mathfrak{h} \right) \subseteq d \rho (\mathfrak{h})$ ensures the power span property of $\mathfrak{g}$.  To this end,  we notice that each $x \in \mathfrak{g}$ can be written as $x = \Ad(g) H$ for some $g\in G$ and $H \in \mathfrak{h}$.  Hence,  we have $d\rho(x) = d \rho ( \Ad(g) H ) = \rho(g) \left( d \rho(H) \right) \rho(g)^{-1}$,  which together with Proposition~\ref{prop:applicability2} leads to
\[
d\rho(x)^{2k+1} = \rho(g) \left( d \rho(H) \right)^{2k+1} \rho(g)^{-1} \in \rho(g) d \rho(\mathfrak{h}) \rho(g)^{-1} = d\rho (\Ad(g)(\mathfrak{h})) \subseteq d\rho (\mathfrak{g}).  \qedhere
\]
\end{proof}
\subsection{The Cayley transform on irreducible representations} 
In this subsection,  we  consider the applicability of the Cayley transform on an irreducible representation of a semisimple Lie group.  Let $\mathfrak{g}$ be a semisimple Lie algebra over $\FF$ and let $\pi: \mathfrak{g} \to \mathfrak{gl}(\mathbb{V})$ be a representation.  We suppose that $\mathbb{V}=\bigoplus_{\lambda \in \Lambda}\mathbb{V}_\lambda$ is the weight space decomposition of $\mathbb{V}$,  where $\Lambda$ is the weight lattice of $\mathfrak{g}$.  Moreover,  we denote the set of simple roots of $\mathfrak{g}$ by $\Delta^+$.  
\subsubsection{Geometric condition} Since a finite dimensional irreducible  representation of a semisimple Lie group are characterized by its highest weight,  the applicability of the Cayley transform on this representation must impose a condition on the highest weight.  To formulate this condition explicitly,  we first establish the following three lemmas. 
\begin{lemma}\label{prop: balance weights}
We have $\sum_{\lambda \in \Lambda} \dim ( \mathbb{V}_\lambda)\lambda =0$.
\end{lemma}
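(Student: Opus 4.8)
The plan is to realize the weighted sum of weights as a trace and exploit that traces annihilate commutators. Fix $h\in\mathfrak{h}$. Since $\pi(h)$ acts on each weight space $\mathbb{V}_\lambda$ by the scalar $\lambda(h)$, the decomposition $\mathbb{V}=\bigoplus_{\lambda\in\Lambda}\mathbb{V}_\lambda$ gives
\[
\tr\bigl(\pi(h)\bigr)=\sum_{\lambda\in\Lambda}\dim(\mathbb{V}_\lambda)\,\lambda(h).
\]
Hence $\sum_{\lambda}\dim(\mathbb{V}_\lambda)\lambda$, regarded as a functional on $\mathfrak{h}$, sends $h$ to $\tr(\pi(h))$, and it suffices to prove that this trace vanishes for every $h\in\mathfrak{h}$.

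Next I would invoke the semisimplicity of $\mathfrak{g}$. Because $\mathfrak{g}=[\mathfrak{g},\mathfrak{g}]$, every element of $\mathfrak{h}$ is a finite sum of brackets; concretely, for each simple root $\alpha$ the coroot $h_\alpha=[e_\alpha,f_\alpha]$ lies in $\mathfrak{h}$ and the $h_\alpha$ form a basis of $\mathfrak{h}$, so it is enough to treat $h=[x,y]$ with $x,y\in\mathfrak{g}$. For such $h$ one has $\pi(h)=\pi(x)\pi(y)-\pi(y)\pi(x)$, whose trace is $0$ since $\mathbb{V}$ is finite-dimensional. Thus $\tr(\pi(h))=0$ for all $h\in\mathfrak{h}$, and combining this with the previous paragraph yields $\sum_{\lambda}\dim(\mathbb{V}_\lambda)\lambda=0$.

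The only point needing a word of care is the case $\FF=\RR$, where the weight lattice and the weight space decomposition tacitly refer to a Cartan subalgebra acting semisimply on the complex space $\mathbb{V}$: there I would pass to the induced representation $\Ind(\pi)\colon\mathfrak{g}^{\CC}\to\mathfrak{gl}(\mathbb{V})$, which carries the same weights on $\mathbb{V}$, and run the argument over $\CC$. I do not expect a genuine obstacle; the entire proof rests only on the two facts that traces kill commutators and that a semisimple Lie algebra equals its own derived algebra. (Alternatively one could observe that $\sum_\lambda\dim(\mathbb{V}_\lambda)\lambda$ is $\W$-invariant, because $\W$ permutes the weights preserving multiplicities, and that $0$ is the only $\W$-fixed vector in $\mathfrak{h}^\ast$; the trace argument is shorter and avoids this input.)
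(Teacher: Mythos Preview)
Your proof is correct and follows essentially the same approach as the paper: both identify $\sum_\lambda\dim(\mathbb{V}_\lambda)\lambda(h)$ with $\tr(\pi(h))$, use that the coroots $H_\alpha=[e_\alpha,f_\alpha]$ span $\mathfrak{h}$ so that the trace vanishes as a trace of commutators, and handle $\FF=\RR$ by passing to $\Ind(\pi)$. The only cosmetic difference is that the paper phrases the complex case by contradiction, whereas you argue directly.
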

\begin{proof}
For simplicity,  we denote $\omega  \coloneqq  \sum_{\lambda \in \Lambda} \dim ( \mathbb{V}_\lambda)\lambda$.  Assume $\FF=\CC$.  We proceed by contradiction.  Suppose that $\omega  \ne 0$.  There exists some $H\in \mathfrak{h}$ such that $\omega  (H) \ne 0$.  Since $\mathfrak{h} = \spn_{\CC} \{ H_{\alpha}: \alpha \in \Delta^+ \}$,  we can find some $\beta \in \Delta^+$ such that $\omega (H_{\beta}) \ne 0$.  Here $H_{\alpha} \in [\mathfrak{g}_{\alpha},  \mathfrak{g}_{-\alpha}]$ is the co-root of $\alpha \in \Delta^+$.  By definition,  $H_{\beta} = [x_{\beta},  x_{-\beta}]$ for some $x_{\beta} \in \mathfrak{g}_{\beta}$ and $x_{-\beta} \in \mathfrak{g}_{-\beta}$.  Thus,  we obtain a contradiction: 
\[
\omega  (H_{\beta}) = \tr (\pi (H_{\beta})) = \tr\left( [\pi(x_\beta),  \pi(x_{-\beta})]  \right)= 0.
\]
For $\FF=\RR$,  we observe that $\omega$ remains unchanged if we pass from $\pi$ to  $\Ind \pi: \mathfrak{g}^{\mathbb{C}} \to \mathfrak{gl}(\mathbb{V})$.  This implies that $\omega = 0$ and the proof is complete.
\end{proof}
For a subset $S$ of a free Noetherian $\mathbb{Z}$-module $M$,  we denote by $\mathbb{Z} S$ the submodule of $M$ generated by $S$.  The \emph{rank of $S$} is defined by $\rank(S) \coloneqq \rank (\mathbb{Z} S)$.  In particular,  $\rank (S)$ is equal to the $\mathbb{K}$-dimension of the subspace of $M\otimes_{\ZZ} \mathbb{K}$ spanned by $S$ over any field $\mathbb{K}$ of characteristic zero.
\begin{lemma}\label{lem:rank-orbit}
If $\pi: \mathfrak{g} \to \mathfrak{gl}(\mathbb{V})$ is an irreducible representation with the highest weight $\mu$ and $\dim_\FF (\pi(\mathfrak{h}))= \dim_\FF(\mathfrak{h})$,  then $\rank ( \mathcal{O}_\mu ) = \dim_\FF(\mathfrak{h})$ where $\mathcal{O}_\mu \coloneqq \{s(\mu): s\in \W \}$ and $\W$ is the Weyl group of $\mathfrak{g}$.
\end{lemma}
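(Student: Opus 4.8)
The plan is to prove the stronger, hypothesis-free identity $\rank(\mathcal O_\mu)=\dim_\FF(\pi(\mathfrak h))$; the stated hypothesis then merely substitutes the value $\dim_\FF(\mathfrak h)$ for the right-hand side. I would first reduce the real case to the complex one by applying the complex case to $\Ind(\pi)\colon\mathfrak g^\CC\to\mathfrak{gl}(\mathbb V)$, which by Proposition~\ref{prop: irr R,C rep} is irreducible with the same highest weight $\mu$ (now relative to the Cartan subalgebra $\mathfrak h^\CC$ of $\mathfrak g^\CC$), and whose Weyl group and weight lattice are precisely those figuring in the statement; here I would use that $\Ind(\pi)(\mathfrak h^\CC)=\spn_\CC\pi(\mathfrak h)$, whose complex dimension equals $\dim_\RR\pi(\mathfrak h)$ because $\pi(\mathfrak h)$ is a totally real subspace of $\mathfrak{gl}(\mathbb V)$ in the relevant situations (when $\mathfrak g$ is compact or split, $\pi(H)$ is normal with purely imaginary, resp.\ real, spectrum, so $\pi(\mathfrak h)\cap i\,\pi(\mathfrak h)=\{0\}$). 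So from now on I assume $\FF=\CC$ and set $W\coloneqq\{\lambda\in\Lambda:\mathbb V_\lambda\ne 0\}$.

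The first main step is to identify $\dim_\CC\pi(\mathfrak h)$ with $\rank(W)$. For $H\in\mathfrak h$ the operator $\pi(H)$ preserves each weight space and acts on $\mathbb V_\lambda$ by the scalar $\lambda(H)$; since $\mathbb V=\bigoplus_{\lambda\in W}\mathbb V_\lambda$, this shows that the kernel of $\pi|_{\mathfrak h}$ is the common zero set $\bigcap_{\lambda\in W}\ker\lambda$ of the functionals in $W$. Hence
\[
\dim_\CC\pi(\mathfrak h)=\dim_\CC\mathfrak h-\dim_\CC\Bigl(\bigcap_{\lambda\in W}\ker\lambda\Bigr)=\dim_\CC\spn_\CC W=\rank(W).
\]

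The second step is to show $\rank(W)=\rank(\mathcal O_\mu)$. By Theorem~\ref{thm: FTRTSLA}(b), $W=\Conv(\W\mu)\cap(\mu+\ZZ\Phi)$, so on one hand $\mathcal O_\mu=\W\mu\subseteq W$ (each $w\mu$ lies in $\Conv(\W\mu)$, and $w\mu-\mu\in\ZZ\Phi$ since $\W$ is generated by the simple reflections and $\mu$ is integral), and on the other hand $W\subseteq\Conv(\W\mu)\subseteq\spn_\RR(\W\mu)$ because a convex combination is in particular a linear combination. Therefore $\spn_\RR W=\spn_\RR(\W\mu)$, and since $W$ and $\W\mu$ both lie in the lattice $\Lambda$ this upgrades to $\spn_\QQ W=\spn_\QQ(\W\mu)$, i.e.\ $\rank(W)=\rank(\mathcal O_\mu)$. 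Combining the two steps with the hypothesis gives $\rank(\mathcal O_\mu)=\rank(W)=\dim_\FF(\pi(\mathfrak h))=\dim_\FF(\mathfrak h)$.

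I expect the only genuine obstacle to lie in the bookkeeping for the reduction from $\FF=\RR$ to $\FF=\CC$, namely verifying $\dim_\CC\Ind(\pi)(\mathfrak h^\CC)=\dim_\RR\pi(\mathfrak h)$ together with the fact that the weight-lattice and Weyl-group data are unaffected by $\Ind$; the two core steps above are essentially formal, relying only on the scalar action of $\mathfrak h$ on weight spaces and on the description of the weight set in Theorem~\ref{thm: FTRTSLA}.
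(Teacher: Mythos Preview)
Your argument is correct and follows the same route as the paper: both identify $\dim_\FF \pi(\mathfrak{h})$ with the rank of the set of weights via the diagonal action of $\mathfrak{h}$ on weight spaces, and then equate that rank with $\rank(\mathcal{O}_\mu)$ using the convex-hull description of the weight set from Theorem~\ref{thm: FTRTSLA}. The paper packages everything into a single matrix $A$ (listing the weights with multiplicity) and closes with the sandwich $\dim\mathfrak{h}=\dim\pi(\mathfrak{h})\le\rank(A)=\rank(\mathcal{O}_\mu)\le\dim\mathfrak{h}$, treating both fields uniformly rather than reducing $\FF=\RR$ to $\FF=\CC$ via $\Ind$ as you do; your handling of the real case is in fact more explicit than the paper's on this point.
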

\begin{proof}
Let $n \coloneqq \dim_{\FF} (\mathfrak{h})$. Suppose that elements in $S \coloneqq \{\lambda\in \Lambda: \mathbb{V}_{\lambda} \ne 0\}$ are $\lambda_1,  \dots,  \lambda_m$.  We define
\begin{equation}\label{lem:rank-orbit:eq1}
A = [
\underbrace{\lambda_1,\dots, \lambda_1}_{\dim (\mathbb{V}_{\lambda_1}) \text{~copies}},\dots,  \underbrace{\lambda_m,\dots, \lambda_m}_{\dim (\mathbb{V}_{\lambda_m}) \text{~copies}} ] \in \Lambda^{\dim (\mathbb{V})}.
\end{equation}
If we regard each $\lambda \in \Lambda \subseteq \mathfrak{h}^\ast$ as an $n$-dimensional column vector,  then $A  \in \mathbb{F}^{n \times \dim (\mathbb{V})}$.  By Theorem~\ref{thm: FTRTSLA}, each element in $S$ is a convex linear combination of elements in $\mathcal{O}_{\mu}$.  Therefore,  we have $\rank( A ) = \rank( S ) = \rank(\mathcal{O}_{\mu})$.  Next we observe that $\pi(\mathfrak{h}) = \left\lbrace
A(H): H\in \mathfrak{h}
\right\rbrace$,  where 
\begin{equation}\label{lem:rank-orbit:eq2}
A(H) \coloneqq \diag \left(
\underbrace{\lambda_1(H),\dots, \lambda_1(H)}_{\dim (\mathbb{V}_{\lambda_1}) \text{~copies}},\dots,  \underbrace{\lambda_m(H),\dots, \lambda_m(H)}_{\dim (\mathbb{V}_{\lambda_m}) \text{~copies}} \right) \in \mathbb{C}^{\dim (\mathbb{V}) \times \dim (\mathbb{V})}.
\end{equation}
This implies that $\dim_{\mathbb{F}} (\mathfrak{h}) = \dim_{\mathbb{F}} (\pi (\mathfrak{h})) \le \rank(A) = \rank(\mathcal{O}_{\mu}) \le \dim_{\mathbb{F}} (\mathfrak{h})$.
\end{proof}

\begin{lemma}\label{prop: Gauss elimination}
If $\pi: \mathfrak{g} \to \mathfrak{gl}(\mathbb{V})$ is an irreducible representation such that $\widetilde{\pi} \left( S^3 \mathfrak{h}\right)\subseteq \pi(\mathfrak{h})$ and  $\dim_\FF( \mathfrak{h} ) = \dim_\FF(  \pi(\mathfrak{h}) ) \eqqcolon n$,  then there exist {linearly independent} $\omega_1,\dots,  \omega_n \in \Lambda$ such that 
\[
\{\omega_1,\dots,  \omega_n\} \subseteq 
\{\lambda\in \Lambda\setminus \{0\}: \mathbb{V}_{\lambda} \ne 0\}
\subseteq 
\{\pm \omega_1,\dots,  \pm \omega_n\}. 
\]
\end{lemma}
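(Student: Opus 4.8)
The plan is to read the structure of the nonzero weights off a commutative matrix algebra that the hypothesis forces $\pi(\mathfrak h)$ to lie inside of. First I would reduce to $\FF=\CC$: replacing $\pi$ by $\Ind(\pi)$ as in the proof of Lemma~\ref{lem: a3 to a} preserves irreducibility by Proposition~\ref{prop: irr R,C rep}, turns the hypothesis into $\widetilde{\Ind(\pi)}\bigl(S^{3}\mathfrak h^{\CC}\bigr)\subseteq \Ind(\pi)\bigl(\mathfrak h^{\CC}\bigr)$, and satisfies $\dim_{\CC}\Ind(\pi)\bigl(\mathfrak h^{\CC}\bigr)=n$: indeed $\Ind(\pi)\bigl(\mathfrak h^{\CC}\bigr)=\pi(\mathfrak h)+i\,\pi(\mathfrak h)$ by definition of $\Ind$, and for a split (resp.\ compact) Cartan subalgebra the eigenvalues of $\pi(\mathfrak h)$ on $\mathbb V$ are real (resp.\ purely imaginary), so $\pi(\mathfrak h)\cap i\,\pi(\mathfrak h)=0$ and the sum above is a direct sum of two real $n$-dimensional spaces, of complex dimension $n$. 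The conclusion for $\Ind(\pi)$ then restricts to the one for $\pi$. So assume $\FF=\CC$, fix a basis of $\mathbb V$ adapted to the weight decomposition, set $N\coloneqq\dim_{\CC}\mathbb V$, and view every element of $\pi(\mathfrak h)$ as a diagonal matrix, i.e.\ a point of $\CC^{N}$, all products below being entrywise.

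\emph{Step 1 (triple-product closure) and Step 2 (idempotents).} For $x,y\in\mathfrak h$ one has $\pi(x)^{2}\pi(y)=\widetilde\pi(x^{2}y)\in\pi(\mathfrak h)$; since $\pi(\mathfrak h)$ is an abelian Lie subalgebra of $\mathfrak{gl}(\mathbb V)$, Lemma~\ref{cond:4} then gives $\pi(x)\pi(y)\pi(z)\in\pi(\mathfrak h)$ for all $x,y,z$ (using that these matrices commute). Writing $U\coloneqq\pi(\mathfrak h)\subseteq\CC^{N}$, this says $U\cdot U\cdot U\subseteq U$. Discard the coordinates sitting over the zero weight, on which $U$ vanishes identically; the remaining coordinate functionals $c_{i}\colon U\to\CC$, $c_{i}(u)=u_{i}$, are exactly the nonzero weights of $\pi$ regarded as functionals on $\mathfrak h\simeq U$, each $c_{i}$ is nonzero, and together they span $U^{\ast}$. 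Now let $\mathcal A\subseteq\CC^{N}$ be the unital subalgebra generated by $\{u^{2}:u\in U\}$. By Step~1 with the two outer factors equal, $u^{2}v\in U$ for all $u,v\in U$, so induction gives $\mathcal A\cdot U\subseteq U$; thus $U$ is a module over $\mathcal A$. Being commutative, unital, finite-dimensional and reduced, $\mathcal A\simeq\CC^{T}$ for some $T$, and its primitive idempotents $e_{1},\dots,e_{T}$ are $0/1$ diagonal matrices with $e_{1}+\dots+e_{T}=\Id$, hence the orthogonal projections onto the blocks of a partition of the weight spaces, so $U=\bigoplus_{p=1}^{T}e_{p}U$.

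\emph{Step 3 (each block is one $\pm$-pair; counting).} Fix $p$ and let $B_{p}$ be the support of $e_{p}$. Since $e_{p}\mathcal A=\CC e_{p}$, the vector $u^{2}$ is constant on $B_{p}$ for every $u\in U$, so $c_{i}^{2}=c_{j}^{2}$ as functions on $U$ whenever $i,j$ lie over weights in $B_{p}$; because a vector space over an infinite field is not a union of two proper subspaces, this forces $c_{i}=\pm c_{j}$. Hence all these $c_{i}$ equal $\pm\omega_{p}$ for a single nonzero functional $\omega_{p}$, which is itself a weight with $\mathbb V_{\omega_{p}}\neq0$; moreover $e_{p}u$ is determined by the scalar $\omega_{p}(u)$, so $\dim e_{p}U=1$, and summing over $p$ gives $n=\dim U=T$. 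Since the $c_{i}$ span $U^{\ast}$, the $n$ functionals $\omega_{1},\dots,\omega_{n}$ span the $n$-dimensional $U^{\ast}$, hence form a basis; every nonzero weight equals some $\pm\omega_{p}$, and every $\omega_{p}$ is a nonzero weight occurring in $\mathbb V$. This is precisely the chain $\{\omega_{1},\dots,\omega_{n}\}\subseteq\{\lambda\in\Lambda\setminus\{0\}:\mathbb V_{\lambda}\neq0\}\subseteq\{\pm\omega_{1},\dots,\pm\omega_{n}\}$.

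Steps~1--2 and the final count are routine once the set-up is in place; the only delicate point is buried in the reduction to $\FF=\CC$ for the compact forms, where $e_{p}\mathcal A$ can be a copy of $\CC$ rather than of $\RR$, so that $u^{2}$ need not be constant on $B_{p}$. In that case the two coordinates of such a $\CC$-factor are complex-conjugate weights, and a compact Cartan subalgebra acts with purely imaginary eigenvalues, so conjugate weights are negatives of one another and the argument of Step~3 applies verbatim. Making this precise --- together with the identity $\dim_{\CC}\Ind(\pi)\bigl(\mathfrak h^{\CC}\bigr)=n$ --- is where I expect essentially all of the genuine work to lie.
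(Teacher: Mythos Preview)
Your argument over $\CC$ is correct and genuinely different from the paper's. The paper chooses coordinates: it writes the weight matrix $A_0\in\ZZ^{n\times m}$, performs Gaussian elimination $A_0=GB$ with $B=[I_n\mid C]$, sets $\omega=\omega'G$ (so the first $n$ weights become the candidates $\omega_1,\dots,\omega_n$), takes the dual basis $X_1,\dots,X_n\in\mathfrak{h}$, and then evaluates the containment $A(X_j)^2A(X_k)\in\pi(\mathfrak h)$ entrywise to force $c_{js}^3=c_{js}$ and $c_{js}^2c_{ks}=0$ for $j\neq k$; this pins each extra column of $B$ to a single $\pm1$. Your route replaces this coordinate bookkeeping with structure: the triple-product closure makes $U=\pi(\mathfrak h)$ a module over the reduced commutative algebra $\mathcal A$ generated by squares, and the primitive idempotent decomposition $\mathcal A\simeq\CC^T$ immediately partitions the nonzero weights into $T$ blocks on each of which all weights agree up to sign. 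The paper's approach is more elementary and entirely self-contained; yours is cleaner and explains \emph{why} the $\pm$-pair structure appears (it is the idempotent decomposition of the square algebra), at the cost of invoking the Wedderburn-type fact that a reduced finite-dimensional commutative $\CC$-algebra is a product of copies of $\CC$.

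Your reduction from $\FF=\RR$ to $\FF=\CC$, however, is incomplete as stated. You only verify $\dim_\CC\Ind(\pi)(\mathfrak h^\CC)=n$ for split and compact Cartan subalgebras, but the lemma carries no such hypothesis. The last paragraph's remarks about running the idempotent argument directly over $\RR$ for compact forms (with $e_p\mathcal A$ possibly a copy of $\CC$) are muddled and unnecessary. The clean fix is already available in the paper: Lemma~\ref{lem:rank-orbit} (valid over either $\FF$) gives $\rank\{\lambda:\mathbb V_\lambda\ne0\}=n$ from the hypothesis $\dim_\FF\pi(\mathfrak h)=n$. Since every weight factors through $\Ind(\pi)\vert_{\mathfrak h^\CC}$, a rank-$n$ family of weights forces this map to be injective, hence $\dim_\CC\Ind(\pi)(\mathfrak h^\CC)=n$, and your $\CC$-argument then applies with $T=n$. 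With this one-line patch your proof is complete.
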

\begin{proof}
Suppose $\Delta^+ = \{\alpha_1,\dots,  \alpha_n\}$.  Let $H_j$ be the co-root of $\alpha_j \in \Delta^+$ for each $1 \le j \le n $.  Since $\{H_1,\dots,  H_n\}$ is a basis of $\mathfrak{h}$,  it has the dual basis $\{\omega'_1,\dots,  \omega'_n\}$.  Assume $\{\lambda\in \Lambda: \mathbb{V}_{\lambda} \ne 0\}= \{ \lambda_1,  \dots,  \lambda_m \}$.  By definition,  there exists column vectors $a_1,\dots,  a_m \in \mathbb{Z}^{n}$ such that 
\begin{equation}\label{prop: Gauss elimination:eq1}
A =  \omega'  A',  \quad \omega'\coloneqq  \begin{bmatrix}
\omega'_1 &
\cdots &
\omega'_n
\end{bmatrix},\quad A' \coloneqq [
\underbrace{a_1,  \cdots,  a_1}_{\dim(\mathbb{V}_1) \text{~copies}},  \cdots,  \underbrace{a_m,  \cdots,  a_m}_{\dim(\mathbb{V}_m) \text{~copies}} ]
\end{equation}
where $A$ is defined in \eqref{lem:rank-orbit:eq1}.  Clearly,  $\omega'(\mathfrak{h}) \subseteq \mathbb{C}^n$ consists of vectors $\omega'(H) \coloneqq \begin{bsmallmatrix} \omega'_1(H),\cdots,  \omega'_n(H)
\end{bsmallmatrix}$ for $H\in \mathfrak{h}$.  In fact,  we must have $\omega'(\mathfrak{h}) = \mathbb{C}^n$ as $\rank (\omega') = n$.  Let $A_0 \coloneqq [
a_1, \cdots,  a_m] \in \mathbb{Z}^{n \times m}$.
\begin{itemize}
\item[$\diamond$] $\FF=\CC$.  According to the proof of Lemma~\ref{lem:rank-orbit},  we have $\rank(A_0) = n$.  We may re-index $\lambda_j$'s such that the left $n\times n$ submatrix of $A_0$ has the full rank.  By Gaussian elimination,  we obtain a decomposition $A_0 = GB$ where $G \in \mathbb{Z}^{n\times n}$ and 
\[
B = \begin{bmatrix}
I_n & C 
\end{bmatrix} = \begin{bmatrix} 
1 & 0 & \cdots & 0 & c_{1,1} & \cdots & c_{1,m-n} \\
0 & 1 & \cdots & 0 & c_{2,1} & \cdots & c_{2,m-n} \\
\vdots & \vdots & \ddots & \vdots & \vdots & \ddots & \vdots \\
0 & 0 & \cdots & 1 & c_{n,1} & \cdots & c_{n,m-n}
\end{bmatrix} \in \mathbb{Q}^{n \times m}.
\]
It is clear that we can construct $B' \in \mathbb{Q}^{n \times \dim (\mathbb{V})}$ from $B$ such that $A' = G B'$.  We take $\omega \coloneqq \omega' G \in \Lambda^n$.  By \eqref{prop: Gauss elimination:eq1} we have $A(H) = \diag ( \omega'(H) A' ) =  \diag ( \omega(H) B' )$ for any $H\in \mathfrak{h}$.  Since $S^3 \left( \pi(\mathfrak{h}) \right) = \widetilde{\pi} \left( S^3 \mathfrak{h}\right)\subseteq \pi(\mathfrak{h})$,  \eqref{lem:rank-orbit:eq2} implies that $A(H)^2 A(H') \in \pi (\mathfrak{h})$ for any $H,H'\in \mathfrak{h}$.  Let $X_1,\dots,  X_n$ be the dual basis of $\omega_1,\dots  \omega_n$.  Then by evaluating $A(X_j)^2 A(X_k)$ we obtain 
\[
c_{js}^3 = c_{js},\quad c_{js}^2 c_{ks} = 0,\quad 1 \le s \le m-n,\; 1 \le j < k \le n.
\]
Therefore,  $c_{js} \in \{0,-1,1\}$ for all $1 \le s \le m-n$ and $1 \le j  \le n$.  Moreover,  for each $1 \le s \le m-n$ there is at most one $1 \le j \le n$ such that $c_{js} \ne 0$.  By \eqref{prop: Gauss elimination:eq1},  we derive that $A = \omega B'$ where $B' \in \{0,\pm 1\}^{n \times \dim (\mathbb{V})}$.  By construction,  we clearly have $\omega = (\lambda_1,\dots,  \lambda_n)^\tp$ and this completes the proof.
\item[$\diamond$] $\FF=\RR$.  We notice that $        \widetilde{\pi} \left( S^3 \mathfrak{h} \right) \subseteq  \pi \left( \mathfrak{h} \right) $ implies 
\[
\widetilde{\Ind(\pi)} \left( S^3 (\mathfrak{h}^{\CC}) \right) = \widetilde{\pi} \left( S^3 \mathfrak{h} \right)\otimes \mathbb{C} \subseteq \pi \left( \mathfrak{h} \right) \otimes \CC = \Ind( \pi)\left( \mathfrak{h}^{\CC}\right).
\]
Since weights of $\pi$ are the same as those of $\Ind(\pi)$,  we obtain the desired inclusions by the complex case.  \qedhere
\end{itemize}
\end{proof}

\begin{theorem}[Geometric condition]\label{thm:main}
Suppose that $G$ is a semisimple Lie group over $\mathbb{F}$ such that $\mathfrak{g}_0 = \mathfrak{h}$.  Let $\rho: G \to \GL(\mathbb{V})$ be an  irreducible representation with the highest weight $\omega_1$ such that $\dim_\FF(\mathfrak{h}) = \dim_\FF(d \rho (\mathfrak{h})) \eqqcolon n$.  If $\rho$ is Cayley,  then we have the following: 
\begin{enumerate}[(a)]
\item Either $\{\lambda \in \Lambda:  V_{\lambda} \ne 0 \}= \mathcal{O}_{\omega_1}$ or  $\{\lambda \in \Lambda:  V_{\lambda} \ne 0 \} =  \mathcal{O}_{\omega_1} \sqcup \{0\}$.  In particular,  a weight of $\mathbb{V}$ is either zero or an extreme weight.
\label{thm:main:item1}
\item $\rank ( \mathcal{O}_{\omega_1} ) = n$.  \label{thm:main:item2}
\item There exist $\omega_2,\dots,  \omega_n \in \Lambda$ such that $\mathcal{O}_{\omega_1} = \{ \pm \omega_1,\dots,  \pm \omega_n \}$.  \label{thm:main:item3}
\end{enumerate}
\end{theorem}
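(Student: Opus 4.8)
The plan is to obtain parts (a)--(c) by feeding Lemmas~\ref{lem: a3 to a}, \ref{lem:rank-orbit} and~\ref{prop: Gauss elimination} into a short combinatorial argument about the Weyl orbit $\mathcal{O}_{\omega_1}$. First I would make the harmless reductions: by Proposition~\ref{prop: group to Lie algebra} we may assume $G$ is connected (passing to the universal cover changes neither $\mathfrak{g}$, nor $d\rho$, nor the Cayley property), so that $d\rho$ is an irreducible representation of $\mathfrak{g}$ with highest weight $\omega_1$; and we may assume $\mathfrak{g}\ne 0$, hence $n=\dim_\FF\mathfrak{h}\ge 1$ and $\omega_1\ne 0$ (otherwise $\mathbb{V}$ would be trivial, forcing $d\rho(\mathfrak{h})=0$ and $n=0$). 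Since $\rho$ is Cayley and $\mathfrak{g}_0=\mathfrak{h}$, Lemma~\ref{lem: a3 to a} gives $\widetilde{d\rho}(S^3\mathfrak{h})\subseteq d\rho(\mathfrak{h})$; this is exactly the hypothesis of Lemma~\ref{prop: Gauss elimination}, which, combined with $\dim_\FF\mathfrak{h}=\dim_\FF d\rho(\mathfrak{h})=n$, produces linearly independent $\mu_1,\dots,\mu_n\in\Lambda$ with
\[
\{\mu_1,\dots,\mu_n\}\subseteq S\setminus\{0\}\subseteq\{\pm\mu_1,\dots,\pm\mu_n\},\qquad S\coloneqq\{\lambda\in\Lambda:\mathbb{V}_\lambda\ne 0\}.
\]
Because the highest weight $\omega_1$ lies in $S\setminus\{0\}$, after reindexing the $\mu_j$ and, if necessary, replacing $\mu_1$ by $-\mu_1$, I may take $\mu_1=\omega_1$. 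Part~(b) is then immediate from Lemma~\ref{lem:rank-orbit}, which yields $\rank(\mathcal{O}_{\omega_1})=\dim_\FF\mathfrak{h}=n$.

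For part~(c) I would show $\mathcal{O}_{\omega_1}=\{\pm\mu_1,\dots,\pm\mu_n\}$. First, $\mathcal{O}_{\omega_1}=\W\omega_1$ consists of nonzero weights of $\mathbb{V}$: every element of $\W\omega_1$ is a weight, and being a $\W$-translate of $\omega_1\ne 0$ under the isometric action of $\W$, it is nonzero; hence $\mathcal{O}_{\omega_1}\subseteq S\setminus\{0\}\subseteq\{\pm\mu_1,\dots,\pm\mu_n\}$. Next, the vector $v\coloneqq\sum_{\lambda\in\mathcal{O}_{\omega_1}}\lambda$ is fixed by $\W$, and since $\W$ fixes no nonzero vector of $\mathfrak{h}^\ast$ (the roots span $\mathfrak{h}^\ast$), we get $v=0$. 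As $\mu_1,\dots,\mu_n$ are linearly independent, the $2n$ vectors $\pm\mu_1,\dots,\pm\mu_n$ are pairwise distinct, so the coefficient of $\mu_j$ in $v$ equals $+1$, $-1$, or $0$ according as $\mathcal{O}_{\omega_1}$ contains exactly $\mu_j$, exactly $-\mu_j$, or both or neither of $\pm\mu_j$; from $v=0$ I conclude that for each $j$ the orbit contains both or neither of $\pm\mu_j$. But part~(b) says $\mathcal{O}_{\omega_1}$ spans $\mathfrak{h}^\ast$, so it contains at least one of $\pm\mu_j$ for every $j$, hence both. Thus $\mathcal{O}_{\omega_1}=\{\pm\mu_1,\dots,\pm\mu_n\}$, and setting $\omega_i\coloneqq\mu_i$ proves part~(c).

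Part~(a) follows by squeezing: $\mathcal{O}_{\omega_1}\subseteq S\setminus\{0\}\subseteq\{\pm\mu_1,\dots,\pm\mu_n\}=\mathcal{O}_{\omega_1}$ forces $S\setminus\{0\}=\mathcal{O}_{\omega_1}$, i.e.\ the set of weights of $\mathbb{V}$ equals $\mathcal{O}_{\omega_1}$ or $\mathcal{O}_{\omega_1}\sqcup\{0\}$ according to whether $0$ is a weight; since the extreme weights of an irreducible representation are precisely the Weyl orbit $\mathcal{O}_{\omega_1}$ of the highest weight, every weight of $\mathbb{V}$ is zero or extreme. The only delicate point is the bookkeeping that lines up the vectors $\mu_j$ coming out of Lemma~\ref{prop: Gauss elimination} with the prescribed highest weight $\omega_1$ and keeps track of signs; apart from that, the genuinely new ingredient is the elementary observation that a full-rank, $\W$-stable subset of $\{\pm\mu_1,\dots,\pm\mu_n\}$ must be the whole set, everything else being a direct application of the lemmas already established. (Lemma~\ref{prop: balance weights} is not needed directly in this route, its content being subsumed by the remark that $\W$ fixes no nonzero vector of $\mathfrak{h}^\ast$.)
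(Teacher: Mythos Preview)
Your proof is correct and follows the paper's overall architecture (invoke Lemma~\ref{lem: a3 to a} to get $\widetilde{d\rho}(S^3\mathfrak{h})\subseteq d\rho(\mathfrak{h})$, then Lemma~\ref{prop: Gauss elimination} for the inclusions $\{\mu_j\}\subseteq S\setminus\{0\}\subseteq\{\pm\mu_j\}$, and Lemma~\ref{lem:rank-orbit} for part~(b)), but the endgame is genuinely different. The paper proves $\mathcal{O}_{\omega_1}=\{\pm\omega_j\}$ in two separate steps: first it shows each $\omega_j\in\mathcal{O}_{\omega_1}$ by a contradiction argument using the convex-hull description $S=\Conv(\mathcal{O}_{\omega_1})\cap(\omega_1+\ZZ\Phi)$ from Theorem~\ref{thm: FTRTSLA}, splitting into the cases $-\omega_j\in\mathcal{O}_{\omega_1}$ and $-\omega_j\notin\mathcal{O}_{\omega_1}$; then it shows some $-\omega_k\in\mathcal{O}_{\omega_1}$ by another contradiction using Lemma~\ref{prop: balance weights} together with the fact that extreme weights have multiplicity one. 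Your route replaces all of this by the single observation that the unweighted sum $\sum_{\lambda\in\mathcal{O}_{\omega_1}}\lambda$ is $\W$-invariant and therefore zero, which forces the orbit to be sign-symmetric inside $\{\pm\mu_j\}$; combined with the rank-$n$ condition this pins down the orbit immediately. This is shorter, avoids the case analysis, does not use the convex-hull description of $S$, and, as you note, makes Lemma~\ref{prop: balance weights} and the multiplicity-one fact unnecessary. The only cost is that you must know $\W$ has no nonzero fixed vector in $\mathfrak{h}^\ast$, but that is a standard consequence of the roots spanning $\mathfrak{h}^\ast$ in the semisimple setting.
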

\begin{proof}
By Lemma~\ref{lem:rank-orbit},  it is clear that \eqref{thm:main:item2} holds.  We prove \eqref{thm:main:item1} and \eqref{thm:main:item3} at the same time.  According to Lemmas~\ref{lem: a3 to a} and \ref{prop: Gauss elimination},  there exist $\omega'_1, \omega_2,  \dots,  \omega_n \in \Lambda$ such that 
\begin{equation}\label{thm:main:eq1}
\{\omega'_1,\dots,  \omega_n\} \subseteq 
\{\lambda\in \Lambda\setminus \{0\}: \mathbb{V}_{\lambda} \ne 0\}
\subseteq 
\{\pm \omega'_1,\dots,  \pm \omega_n\}. 
\end{equation}
Without loss of generality,  we may assume that $\omega'_1$ or $-\omega'_1$ is the highest weight of $\mathbb{V}$.  By swapping $\Delta^+$ and $\Delta^-$ if necessary,  we may further assume that $\omega'_1 = \omega_1$.  For simplicity,  we denote 
\[
\mathcal{O} \coloneqq \mathcal{O}_{\omega_1},\quad S \coloneqq \{\lambda\in \Lambda: \mathbb{V}_{\lambda} \ne 0\}.
\] 

We first claim that $\omega_j \in \mathcal{O}$ for all $1 \le j \le n$.  Indeed,  if on the contrary that $\omega_j \not\in \mathcal{O}$ for some $j$,  then we may assume that $\omega_2 \not\in \mathcal{O}$.  We write $\mathcal{O} = \{ \mu_1,\dots,  \mu_m \}$ where $\mu_1 = \omega_1$.  We recall that $\mathcal{O} \subseteq S \setminus \{0\}$ and $\omega_2 \in S = \Conv( \mathcal{O} ) \cap \left( \omega_1 + \mathbb{Z} \Phi \right)$ where $\Conv( \mathcal{O} )$ is the convex hull of $\mathcal{O}$ and $ \mathbb{Z} \Phi$ is the root lattice of $\mathfrak{g}$.  Hence,  we have 
\[
\omega_2 = \sum_{i=1}^{m} c_i \mu_i,  \quad \sum_{i=1}^{m} c_i = 1, \quad   0 \leq c_i < 1, \; 1 \le i \le m.
\]
If $- \omega_2 \notin \mathcal{O}$ then $\mathcal{O} \subseteq \{\pm \omega_1, \pm \omega_3,  \dots,  \pm \omega_n\}$.  This implies that $\omega_2 \in S \subseteq  \spn\{ \omega_1, \omega_3,\dots \omega_n\}$,  which contradicts to the linear independence of $\omega_1,\dots,  \omega_n$.  If $- \omega_2 \in \mathcal{O}$,  then $-\omega_2 = \mu_j$ for some $1 \le j  \le n$.  This leads to a contradiction again:
\[
(1+c_j) \omega_2 = \sum_{i \ne j} c_i \mu_i \in  \spn\{ \mu_i:  1\le i \ne j \le m\} \subseteq \spn\{\pm \omega_1, \pm \omega_3,  \dots,  \pm \omega_n\}.
\]

Next,  we prove that there is some $1 \le k \le n$,  such that $-\omega_k \in \mathcal{O}$.  Otherwise,  by \eqref{thm:main:eq1} we must have $\mathcal{O} = \{\omega_1,\dots,  \omega_n\}$.  Then $\mathcal{O}  \subseteq S  \subseteq \Conv( \mathcal{O} )$ and \eqref{thm:main:eq1} imply $S \setminus \{0\} = \mathcal{O}$.  The linear independence of $\omega_1,\dots,  \omega_n$ indicates that $\sum_{j=1}^n \omega_n \ne 0$.  Since $\dim \mathbb{V}_{\omega_j} = 1$ for each $1 \le j \le n$,  this contradicts to Lemma~\ref{prop: balance weights}.

Lastly,  $\pm \omega_k \in \mathcal{O}$ if and only if $\pm \omega_j \in \mathcal{O}$ for each $1 \le j \le n$,  since $ \mathcal{O}$ is an orbit of $\W$.  Thus,  \eqref{thm:main:eq1} implies that $\mathcal{O} = \{\pm \omega_1,\dots,  \pm \omega_n\} = S \setminus \{0\}$.  This completes the proof of \eqref{thm:main:item1} and \eqref{thm:main:item3}.
\end{proof}
Theorem~\ref{thm:main} characterizes the set of weights of an irreducible representation $\mathbb{V}$,  on which the Cayley transform is applicable.  As a direct consequence,  one can also describe its weight diagram.  We recall that the weight diagram of a representation $\mathbb{V}$ is a pair $(V,  m)$,  where $V \coloneqq \{\lambda \in \Lambda: \mathbb{V}_{\lambda} \ne 0\}$ and $m: V \to \mathbb{N}$ is defined by $m(\lambda) = \dim V_{\lambda}$.
\begin{corollary}[Weight diagram]\label{rmk:polytope}
{Let $(V,m)$ be the weight diagram of $\mathbb{V}$.  Then $V$ is a $\W$-invariant subset $\Lambda \simeq \mathbb{Z}^n$,  which is not contained in any low rank sublattice.  It is symmetric about the origin and has $2n$ or $(2n+1)$ vertices,  $2n$ of which form a $\W$-orbit and $m = 1$ on this orbit.  In the latter case,  the origin is the additional vertex. }
\end{corollary}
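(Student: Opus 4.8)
The plan is to deduce Corollary~\ref{rmk:polytope} directly from Theorem~\ref{thm:main}, supplemented only by two standard facts from the representation theory of semisimple Lie algebras: that the support $V=\{\lambda\in\Lambda:\mathbb{V}_\lambda\ne 0\}$ is stable under the Weyl group $\W$ and that $m$ is constant on each $\W$-orbit, and that every extreme weight (a weight lying in the $\W$-orbit of the highest weight) is simple, i.e. has multiplicity one; see \cite{fulton2013representation}. No new ideas beyond Theorem~\ref{thm:main} are needed, so the "proof" is largely a dictionary translation into the language of weight diagrams.

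First I would record that we are in the hypotheses of Theorem~\ref{thm:main}: $\rho$ is an irreducible Cayley representation with highest weight $\omega_1$, $\mathfrak{g}_0=\mathfrak{h}$, and $\dim_\FF(\mathfrak{h})=\dim_\FF(d\rho(\mathfrak{h}))\eqqcolon n$. Part~(a) of that theorem gives the dichotomy $V=\mathcal{O}_{\omega_1}$ or $V=\mathcal{O}_{\omega_1}\sqcup\{0\}$, and part~(c) gives $\mathcal{O}_{\omega_1}=\{\pm\omega_1,\dots,\pm\omega_n\}$ with $\omega_1,\dots,\omega_n$ linearly independent in $\Lambda$. Linear independence forces $0\notin\mathcal{O}_{\omega_1}$ and forces the $2n$ elements $\pm\omega_1,\dots,\pm\omega_n$ to be pairwise distinct, so $|V|=2n$ in the first case and $|V|=2n+1$ in the second, the extra element being the origin; this is exactly the assertion that $V$ has $2n$ or $(2n+1)$ vertices, $2n$ of which form the single $\W$-orbit $\mathcal{O}_{\omega_1}$, with the origin as the additional vertex in the latter case. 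The set $\{\pm\omega_j\}_{j=1}^n$ is visibly symmetric about the origin, and adjoining $0$ preserves this, so $V=-V$.

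Next I would treat the lattice-rank claim and the multiplicities. By Theorem~\ref{thm:main}(b) we have $\rank(\mathcal{O}_{\omega_1})=n$; since $\rank_\ZZ\Lambda=\dim_\FF(\mathfrak{h})=n$, the subgroup $\ZZ V\subseteq\Lambda$ has full rank $n$, and hence $V$ cannot be contained in any sublattice of $\Lambda$ of rank $<n$. For the multiplicities, every element of $\mathcal{O}_{\omega_1}$ lies in the $\W$-orbit of the highest weight $\omega_1$, hence is extreme and therefore simple, so $m\equiv 1$ on $\mathcal{O}_{\omega_1}$ (equivalently, $m(\omega_1)=1$ together with the $\W$-invariance of $m$). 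Combining this with the previous paragraph yields every assertion of the corollary. I expect no genuine obstacle; the only points meriting explicit mention are the distinctness of the $2n$ extreme weights, which rests on the linear independence supplied by Lemma~\ref{prop: Gauss elimination} (propagated through Theorem~\ref{thm:main}), and the two cited standard facts about the $\W$-invariance of $m$ and the simplicity of extreme weights.
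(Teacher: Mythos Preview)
Your proposal is correct and matches the paper's approach: the paper presents this corollary as a direct consequence of Theorem~\ref{thm:main} without giving a separate proof, and you have simply spelled out that deduction, supplementing it with the standard facts on $\W$-invariance and simplicity of extreme weights exactly as intended.
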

\begin{definition}[Cayley configuration]
An irreducible representation of a semisimiple Lie group is called a representation with Cayley configuration if its weight diagram has the configuration described in Corollary~\ref{rmk:polytope}.
\end{definition}

\begin{corollary}[Self-duality]
For each representation $\rho: G \to \GL(\mathbb{V})$ with Cayley configuration,  we have $\mathbb{V} \simeq \mathbb{V}^\ast$ as representations of $G$.  In particular,  If $\mathfrak{g}_0 = \mathfrak{h}$ and $\rho: G \to \GL(\mathbb{V})$ is an  irreducible representation such that $\dim_\FF(\mathfrak{h}) = \dim_\FF(d \rho (\mathfrak{h}))$ and  $\rho$ is Cayley,  then $\mathbb{V}$ is self-dual.
\end{corollary}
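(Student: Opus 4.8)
The plan is to read off both assertions from the shape of the weight set described in Corollary~\ref{rmk:polytope}, combined with the classification of irreducible representations by highest weight (Theorem~\ref{thm: FTRTSLA}(a)). The ``in particular'' clause is immediate: its hypotheses ($G$ semisimple with $\mathfrak{g}_0 = \mathfrak{h}$, $\rho$ irreducible with $\dim_\FF(\mathfrak{h}) = \dim_\FF(d\rho(\mathfrak{h}))$, and $\rho$ Cayley) are exactly those of Theorem~\ref{thm:main}, so by Theorem~\ref{thm:main} and Corollary~\ref{rmk:polytope} the representation $\rho$ has Cayley configuration, and the first assertion then applies. Thus it suffices to prove: an irreducible representation $\rho: G \to \GL(\mathbb{V})$ of a semisimple Lie group with Cayley configuration is self-dual.

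First I would reduce to $\FF = \CC$. When $G$ is real, replace $d\rho$ by $\Ind(d\rho): \mathfrak{g}^{\CC} \to \mathfrak{gl}(\mathbb{V})$; by Proposition~\ref{prop: irr R,C rep} this is again irreducible, it has the same weight space decomposition (hence the same weight diagram, hence still Cayley configuration), and since forming the dual module commutes with $\Ind$ and $\Res$, a $\mathfrak{g}^{\CC}$-module isomorphism $\mathbb{V} \simeq \mathbb{V}^{\ast}$ restricts to a $\mathfrak{g}$-module isomorphism. A $\mathfrak{g}$-module isomorphism is automatically $G$-equivariant when $G$ is connected, and the general case reduces to the identity component (or, in the Cayley setting, to the universal cover via Proposition~\ref{prop: group to Lie algebra}). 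So it remains to treat a connected complex semisimple $G$ and an irreducible $\mathfrak{g}$-module $\mathbb{V}$ with Cayley configuration and highest weight $\omega_1$.

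For the core step, write $V \coloneqq \{\lambda \in \Lambda: \mathbb{V}_\lambda \ne 0\}$. By Corollary~\ref{rmk:polytope}, $V$ equals $\mathcal{O}_{\omega_1}$ or $\mathcal{O}_{\omega_1} \sqcup \{0\}$; in either case $V = -V$, and the multiplicity function is even, being $1$ on $\mathcal{O}_{\omega_1}$ and fixed by $\lambda \mapsto -\lambda$ at the origin. The dual module $\mathbb{V}^{\ast}$ is irreducible with weight set $-V = V$ and the same multiplicities. Since the highest weight of an irreducible module is the unique maximal element of its weight set for the order defined by $\Delta^+$ (equivalently, the unique dominant weight in it), $\mathbb{V}$ and $\mathbb{V}^{\ast}$ share the highest weight $\omega_1$, and Theorem~\ref{thm: FTRTSLA}(a) yields $\mathbb{V} \simeq \mathbb{V}^{\ast}$. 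Equivalently, one checks directly that $-\omega_1$, being the unique antidominant element of the symmetric $\W$-orbit $\mathcal{O}_{\omega_1}$, equals $w_0(\omega_1)$ for the longest element $w_0 \in \W$, so the highest weight $-w_0(\omega_1)$ of $\mathbb{V}^{\ast}$ is $\omega_1$.

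I do not expect a genuine obstacle here: the content is the standard equivalence ``symmetric weight set $\Longleftrightarrow$ self-dual'' for irreducible modules, and the substantive work was already carried out in Theorem~\ref{thm:main} and Corollary~\ref{rmk:polytope}. The only points requiring care are the bookkeeping for real forms (handled by Proposition~\ref{prop: irr R,C rep}) and the passage from $\mathfrak{g}$-equivariance to $G$-equivariance.
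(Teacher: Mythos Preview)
Your proposal is correct and follows essentially the same route as the paper: both arguments use the symmetry $V = -V$ of the weight set coming from the Cayley configuration to identify the weight sets of $\mathbb{V}$ and $\mathbb{V}^\ast$, and then conclude via highest-weight theory. You supply more of the bookkeeping (the reduction to $\mathbb{F}=\mathbb{C}$, the passage from $\mathfrak{g}$-modules to $G$-modules, and the explicit identification $-\omega_1 = w_0(\omega_1)$) that the paper's one-line proof leaves implicit; one small slip is the parenthetical ``the unique dominant weight in it,'' since $0$ is also dominant when $\mathbb{V}_0 \ne 0$, but your primary characterization via the unique maximal element is correct and suffices.
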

\begin{proof}
By definition and the symmetry of the weight diagram of a representation with Cayley configuration,  we have 
\[
\{\lambda\in \Lambda: \mathbb{V}_{\lambda} \ne 0 \} = -\{\lambda\in \Lambda: (\mathbb{V}^\ast)_{\lambda} \ne 0 \} = \{\lambda\in \Lambda: (\mathbb{V}^\ast)_{\lambda} \ne 0 \},
\]
from which we conclude that $\mathbb{V} \simeq \mathbb{V}^\ast$.
\end{proof}

We recall from Section~\ref{sec:Cayley} that $\Out(G)$ acts on $R(G)$.  By Theorem~\ref{thm: FTRTSLA},  this induces an action of $\Out(G)$ on the weight lattice $\Lambda$.  In particular,  $\Out(G)$ also acts on the subset $R_0(G)$ consisting of isomorphism classes of irreducible representations.  The following observation is obvious .
\begin{corollary}[Stability II]
If $\mathbb{V} \in R_0(G)$ is a representation with Cayley configuration,  then for each $\varphi \in \Out(G)$,  $\varphi (\mathbb{V})$ is also a representation with Cayley configuration.
\end{corollary}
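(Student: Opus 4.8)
The statement is essentially a bookkeeping exercise: once one knows that $\Out(G)$ acts on $\Lambda$ by $\W$-normalizing lattice automorphisms fixing the origin, every clause in the definition of Cayley configuration (via Corollary~\ref{rmk:polytope}) is manifestly preserved. I would organize the proof in two steps, plus a remark on where the only nontrivial input lies.

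\emph{Step 1: the $\Out(G)$-action on $\Lambda$.} Fix $\varphi\in\Out(G)$ and pick a representative $\sigma\in\Aut(G)$. Twisting $\sigma$ by an inner automorphism — which does not change the class $\varphi(\mathbb V)\in R_0(G)$ by Proposition~\ref{prop:auto}~\eqref{prop:auto:3} — we may assume $d\sigma$ preserves the fixed Cartan subalgebra $\mathfrak h$ together with a fixed positive system, hence permutes the simple roots. Dualizing, $\sigma$ induces a lattice automorphism of $\Lambda\subset\mathfrak h^\ast$ — still denoted $\varphi$ — that fixes $0$ and normalizes the Weyl group (conjugation by $\varphi$ sends $s_\alpha$ to $s_{\varphi(\alpha)}$, so $\varphi\W\varphi^{-1}=\W$). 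By Theorem~\ref{thm: FTRTSLA}, the representation $\varphi(\mathbb V)$ — which is irreducible by Proposition~\ref{prop:auto}~\eqref{prop:auto:1}, hence lies in $R_0(G)$ — has weight diagram $\bigl(\varphi(V),\,m\circ\varphi^{-1}\bigr)$, where $(V,m)$ is the weight diagram of $\mathbb V$. In other words, passing from $\mathbb V$ to $\varphi(\mathbb V)$ replaces the set of weights by its image under the $\W$-normalizing, origin-fixing lattice automorphism $\varphi$, and leaves all multiplicities intact.

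\emph{Step 2: checking the clauses of Corollary~\ref{rmk:polytope}} for $V'\coloneqq\varphi(V)$. Since $\varphi$ is a lattice automorphism it preserves the rank of any subset, so $V'$ is not contained in a proper sublattice. Since $\varphi$ is linear it commutes with $v\mapsto -v$, so $V'=-V'$. Since $\varphi$ is a multiplicity-preserving bijection, $V'$ has the same number $2n$ or $2n+1$ of points, with $m=1$ on a distinguished subset of $2n$ of them; extending $\varphi$ $\RR$-linearly, it preserves $\Conv(\cdot)$ and its extreme points, so those $2n$ points are again the vertices of $\Conv(V')$. Since $\varphi$ normalizes $\W$ we have $\varphi(\W\omega)=\W\varphi(\omega)$ for the highest weight $\omega$ of $\mathbb V$, so the $2n$ vertices of $V'$ form a single $\W$-orbit. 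Finally $\varphi(0)=0$, so in the $(2n+1)$-point case the extra point of $V'$ is again the origin. Hence $\varphi(\mathbb V)$ satisfies the configuration of Corollary~\ref{rmk:polytope}, i.e.\ it has Cayley configuration.

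The only step requiring genuine input beyond formal manipulation is Step~1 — that, after an inner twist, an outer automorphism acts on $\Lambda$ as a $\W$-normalizing lattice automorphism fixing $0$ — and this is precisely the classical identification of $\Out(G)$ with the symmetry group of the Dynkin diagram (for $G$ simple, cf.\ \cite[Section~20.3]{fulton2013representation}; for $G$ semisimple one additionally allows permutations of isomorphic simple factors). Everything after that is the verification in Step~2, which is why the corollary is stated as ``obvious''.
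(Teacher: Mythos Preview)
Your proof is correct. The paper gives no argument for this corollary beyond declaring it ``obvious'', and your two steps supply exactly the verification one would expect: realize $\Out(G)$ as $\W$-normalizing lattice automorphisms of $\Lambda$ via the Dynkin-diagram symmetries, then check that each clause of Corollary~\ref{rmk:polytope} is preserved under such a map. There is nothing to compare --- you have simply written out what the authors left implicit.
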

\subsubsection{The sufficiency of the geometric condition}Theorem~\ref{thm:main} (or equivalently,   Corollary~\ref{rmk:polytope}) is a necessary condition for the applicability of the Cayley transform on an irreducible representation.  Next we investigate its sufficiency.  We begin with the existence of a representation with Cayley configuration.
\begin{lemma}[Existence of representation with Cayley configuration]\label{prop:orbit to complex contain}
Let $G$ be a simply connected semisimple Lie group over $\mathbb{F}$ and let $\mathcal{O}$ be the $\W$-orbit of $\omega_1 \in \Lambda$.  Suppose that $\mathcal{O}$ is symmetric about the origin,  $\dim_{\mathbb{F}} (\mathfrak{h}) = \rank(\mathcal{O}) = |\mathcal{O}|/2 = n$ and $\Conv(\mathcal{O}) \cap \left( \omega_1 + \mathbb{Z} \Phi \right) \subseteq \mathcal{O} \cup \{ 0 \}$.   Then there exists a unique irreducible representation $\rho: G \to \GL(\mathbb{V})$ such that 
\begin{enumerate}[a)]
\item $\{\lambda \in \Lambda: \mathbb{V}_{\lambda} \ne 0 \} = \Conv(\mathcal{O}) \cap (\omega_0 + \mathbb{Z} \Phi)$.  \label{prop:orbit to complex contain:item1}
\item $\dim \left( d \rho (\mathfrak{h} ) \right) = n$. \label{prop:orbit to complex contain:item2}
\item $\widetilde{ d \rho } \left( S^3 \mathfrak{h} \right) \subseteq d \rho (\mathfrak{h})$ for $\mathbb{F} = \mathbb{C}$ and $\widetilde{ d \Ind (\rho) } \left( S^3 \mathfrak{h}^{\mathbb{C}} \right) \subseteq d \Ind(\rho) (\mathfrak{h}^{\mathbb{C}})$ for $\mathbb{F} = \mathbb{R}$.  \label{prop:orbit to complex contain:item3}
\end{enumerate}
\end{lemma}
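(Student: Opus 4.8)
The plan is to construct the representation directly from the highest weight $\omega_1$ and then verify the three properties using the structural hypotheses on $\mathcal{O}$. For existence and uniqueness, one first checks that $\omega_1$ (after replacing it by a $\mathcal{W}$-conjugate lying in the fundamental Weyl chamber, which does not change $\mathcal{O}$) is a dominant integral weight; since $G$ is simply connected, Theorem~\ref{thm: FTRTSLA}(a) then produces a unique irreducible representation $\rho\colon G\to\GL(\mathbb{V})$ with highest weight $\omega_1$, and Theorem~\ref{thm: FTRTSLA}(b) immediately gives property~\eqref{prop:orbit to complex contain:item1}, namely $\{\lambda\in\Lambda:\mathbb{V}_\lambda\neq 0\}=\Conv(\mathcal{O})\cap(\omega_1+\mathbb{Z}\Phi)$. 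By the hypothesis $\Conv(\mathcal{O})\cap(\omega_1+\mathbb{Z}\Phi)\subseteq\mathcal{O}\cup\{0\}$, the weight set is exactly $\mathcal{O}$ or $\mathcal{O}\sqcup\{0\}$, so the picture is completely explicit from the outset.

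For property~\eqref{prop:orbit to complex contain:item2}, I would argue exactly as in the proof of Lemma~\ref{lem:rank-orbit}: the image $d\rho(\mathfrak{h})$ consists of the diagonal matrices $A(H)$ of \eqref{lem:rank-orbit:eq2}, so $\dim_{\mathbb{F}}(d\rho(\mathfrak{h}))=\rank(\mathcal{O})=n$, using that the convex-hull weights contribute no new rank. For $\mathbb{F}=\mathbb{R}$ one passes to $\Ind(\rho)$, whose weights coincide with those of $\rho$, and then invokes the complex case. The real case of \eqref{prop:orbit to complex contain:item3} likewise reduces to the complex case by tensoring with $\mathbb{C}$, since $\widetilde{d\Ind(\rho)}(S^3\mathfrak{h}^{\mathbb{C}})=\widetilde{d\rho}(S^3\mathfrak{h})\otimes\mathbb{C}$ and $d\Ind(\rho)(\mathfrak{h}^{\mathbb{C}})=d\rho(\mathfrak{h})\otimes\mathbb{C}$.

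The heart of the argument is property~\eqref{prop:orbit to complex contain:item3} for $\mathbb{F}=\mathbb{C}$: I must show $\widetilde{d\rho}(S^3\mathfrak{h})\subseteq d\rho(\mathfrak{h})$, i.e.\ $d\rho(H_1)d\rho(H_2)d\rho(H_3)\in d\rho(\mathfrak{h})$ for all $H_i\in\mathfrak{h}$. Since $d\rho(\mathfrak{h})$ acts diagonally in the weight basis, with $d\rho(H)$ acting on $\mathbb{V}_\lambda$ by $\lambda(H)$, the product acts on $\mathbb{V}_\lambda$ by $\lambda(H_1)\lambda(H_2)\lambda(H_3)$; so I need a weight $\mu\in\mathfrak{h}^\ast$ (depending linearly on $H_2$, say, with $H_1,H_3$ fixed) with $\mu(H)=\lambda(H_1)\lambda(H_2)\lambda(H_3)$ for every weight $\lambda$. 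Concretely, the question is whether the cubic function $\lambda\mapsto\lambda(H_1)\lambda(H_2)\lambda(H_3)$ on the finite weight set agrees with the restriction of a linear functional. This is where the geometry of $\mathcal{O}$ enters: by~\eqref{thm:main:item3}-type reasoning one can choose a basis $\omega_1,\dots,\omega_n$ of $\mathfrak{h}^\ast$ with $\mathcal{O}=\{\pm\omega_1,\dots,\pm\omega_n\}$; picking $H_i$ in the dual basis $X_1,\dots,X_n$, the value $\omega_k(X_j)$ is $\pm\delta_{jk}$, so the cubic monomials $\omega_k(X_a)\omega_k(X_b)\omega_k(X_c)$ reduce to $\pm$ single-index expressions, and one checks directly that the resulting function of $\omega_k$ is linear — it coincides with $\pm\omega_a$ when $a=b=c$ and vanishes otherwise. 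The extra weight $0$ (if present) contributes $0$ on both sides, so it causes no trouble. I expect the main obstacle to be the bookkeeping in this final verification: handling arbitrary $H_1,H_2,H_3\in\mathfrak{h}$ (not just dual-basis vectors) cleanly, which is done by multilinearity, and confirming that the linear functional one produces actually lies in $\Lambda\subseteq\mathfrak{h}^\ast$ so that it defines an element of $d\rho(\mathfrak{h})$ rather than merely a diagonal operator — but this follows because $\pm\omega_a\in\Lambda$ and $d\rho(\mathfrak{h})$ is spanned by the $d\rho$ of coroots, whose weight-eigenvalues realize all of $\spn_{\mathbb{F}}\mathcal{O}$.
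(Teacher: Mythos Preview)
Your proposal is correct and follows essentially the same route as the paper: write $\mathcal{O}=\{\pm\omega_1,\dots,\pm\omega_n\}$, take the irreducible representation of highest weight $\omega_1$, use the dual basis $H_1,\dots,H_n$ of $\omega_1,\dots,\omega_n$, and verify the cubic containment by a direct diagonal computation (the paper computes $(d\rho(H))^3$ for general $H=\sum c_jH_j$ rather than triple products of basis vectors, but these are equivalent via polarization). Your closing worry about ``the linear functional lying in $\Lambda$'' is slightly misstated---what you need is that the resulting diagonal operator lies in $d\rho(\mathfrak{h})$, which follows immediately since $d\rho(X_a)d\rho(X_b)d\rho(X_c)$ equals $d\rho(X_a)$ when $a=b=c$ and $0$ otherwise.
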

\begin{proof}
If such an irreducible representation exists,  then its uniqueness follows immediately.  Since $\mathcal{O}$ is symmetric about the origin and $|\mathcal{O}| = 2n$,  we may write $\mathcal{O} = \{ \pm \omega_1,\dots,  \pm \omega_n \}$.  The assumption that $\rank (\mathcal{O}) = n$,  $\omega_1,\dots,  \omega_n$ are linearly independent.  Let $C$ be the closed Weyl chamber of $\Lambda$ containing $\omega_1$.  By definition,  $\Lambda = \W C$ and $\omega_1 \in \mathcal{O} \cap C$.  Let $\rho: G \to \GL(\mathbb{V})$ be the irreducible representation associated to $\omega_1$.  We have 
\[
\mathcal{O} \subseteq \{\lambda \in \Lambda: \mathbb{V}_{\lambda} \ne 0 \} = \Conv(\mathcal{O}) \cap (\omega_0 + \mathbb{Z} \Phi) \subseteq \mathcal{O} \cup \{ 0 \}.
\]
\begin{itemize}
\item[$\diamond$] $\mathbb{F} = \mathbb{C}$.  Let $H_1,\dots,  H_n \in \mathfrak{h}$ be the dual basis of $\omega_1,\dots,  \omega_n$.  It is obvious that $\omega_j (H_k)^3 = \omega_j (H_k)$ for any $1 \le j,  k \le n$.  For each $H \in \mathfrak{h}$,  we write $H = \sum_{j=1}^n c_j H_j$.  Therefore,  by \eqref{lem:rank-orbit:eq2} we obtain that $\dim \left( d \rho (\mathfrak{h}) \right) = n$ and 
\[
\left( d \rho (H) \right)^3 = 
\begin{cases}
\diag \left( 
c^3_1, - c^3_1,  \dots,  c^3_n, - c^3_n \right) &\text{~if~} \Conv(\mathcal{O}) \cap (\omega_0 + \mathbb{Z} \Phi) = \mathcal{O} \\
\diag \left(c^3_1, - c^3_1,  \dots,  c^3_n, - c^3_n, 0  \right) &\text{~otherwise}
\end{cases}.
\]
This implies that $\widetilde{d \rho} \left(  S^3\mathfrak{h}\right) \subseteq d\rho (\mathfrak{h})$.
\item[$\diamond$] $\mathbb{F} = \mathbb{R}$.  Let $\rho^{\mathbb{C}}: G^{\mathbb{C}} \to \GL(\mathbb{V})$ be the representation satisfying \eqref{prop:orbit to complex contain:item1}--\eqref{prop:orbit to complex contain:item3} for $G^{\mathbb{C}}$.  By construction,  we have $\{\lambda \in \Lambda: \mathbb{V}_{\lambda} \ne 0 \} = \Conv(\mathcal{O}) \cap (\omega_0 + \mathbb{Z} \Phi) $.  It is clear that $\rho = \Res(\rho^{\mathbb{C}}): G \to \GL(\mathbb{V})$ is the desired representation since $\Ind \left( \Res \left( \rho^{\mathbb{C}}\right) \right) = \rho^{\mathbb{C}}$.  \qedhere
\end{itemize}
\end{proof}

\begin{proposition}\label{prop: split compact back to real}
Let $G$ be a simply connected compact or split real form of the complex semisimple Lie group $G^{\mathbb{C}}$ and let $\mathcal{O}$ be the $\W$-orbit of $\omega_1 \in \Lambda$.  Suppose that $\mathcal{O}$ is symmetric about the origin,  $\dim (\mathfrak{h}) = \rank(\mathcal{O}) = |\mathcal{O}|/2 = n$ and $\Conv(\mathcal{O}) \cap \left( \omega_1 + \mathbb{Z} \Phi \right) \subseteq \mathcal{O} \cup \{ 0 \}$.   Then there exists a unique irreducible representation $\rho: G \to \GL(\mathbb{V})$ such that 
\begin{enumerate}[a)]
\item $\{\lambda \in \Lambda: \mathbb{V}_{\lambda} \ne 0 \} = \Conv(\mathcal{O}) \cap (\omega_0 + \mathbb{Z} \Phi)$.  \label{prop: split compact back to real:item1}
\item $\dim \left( d \rho (\mathfrak{h} ) \right) = n$. \label{prop: split compact back to real:item2}
\item $\widetilde{ d \rho } \left( S^3 \mathfrak{h} \right) \subseteq d \rho (\mathfrak{h})$.  \label{prop: split compact back to real:item3}
\end{enumerate}
\end{proposition}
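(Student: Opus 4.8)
The plan is to reduce the statement to Lemma~\ref{prop:orbit to complex contain}, which applies to $G$ viewed as a simply connected semisimple Lie group over $\mathbb{R}$ with complexification $G^{\mathbb{C}}$. That lemma already produces a unique irreducible representation $\rho\colon G\to\GL(\mathbb{V})$ satisfying the first two asserted properties (the description of the nonzero weight spaces and $\dim d\rho(\mathfrak{h})=n$) together with the induced form of the third, namely $\widetilde{d\Ind(\rho)}\left(S^3\mathfrak{h}^{\mathbb{C}}\right)\subseteq d\Ind(\rho)\left(\mathfrak{h}^{\mathbb{C}}\right)$, and it supplies uniqueness as well. So the only thing left is to descend this last inclusion from $\mathfrak{h}^{\mathbb{C}}$ to the real Cartan subalgebra $\mathfrak{h}$, i.e. to prove $\widetilde{d\rho}\left(S^3\mathfrak{h}\right)\subseteq d\rho(\mathfrak{h})$; this is the one place where the hypothesis that $G$ is a compact or split real form is used.

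For the descent, fix a basis of $\mathbb{V}$ adapted to the weight space decomposition, so that $d\rho(H)=\diag\left(\lambda_1(H),\dots,\lambda_N(H)\right)$ for $H\in\mathfrak{h}$, where $N=\dim\mathbb{V}$ and $\lambda_1,\dots,\lambda_N$ list the weights of $\mathbb{V}$ with multiplicity. If $G$ is split, the weights take real values on the split Cartan subalgebra $\mathfrak{h}$, so $d\rho(\mathfrak{h})$ — and hence also $\widetilde{d\rho}(S^3\mathfrak{h})$, since it is spanned by products of three such diagonal matrices — lies in the real subspace $W_0\coloneqq\mathfrak{gl}_N(\mathbb{R})$ of $\mathfrak{gl}_N(\mathbb{C})$. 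If $G$ is compact, the weights take purely imaginary values on the compact Cartan subalgebra $\mathfrak{h}$, and as a product of three diagonal matrices with entries in $i\mathbb{R}$ again has entries in $i\mathbb{R}$, both $d\rho(\mathfrak{h})$ and $\widetilde{d\rho}(S^3\mathfrak{h})$ lie in $W_0\coloneqq i\,\mathfrak{gl}_N(\mathbb{R})$. In either case $W_0$ is a real subspace of $\mathfrak{gl}_N(\mathbb{C})$ with $W_0\cap iW_0=0$ and $\mathbb{C}\,W_0=\mathfrak{gl}_N(\mathbb{C})$, whence every real subspace $A\subseteq W_0$ satisfies $\left(\mathbb{C}\,A\right)\cap W_0=A$: if $a+ib\in W_0$ with $a,b\in A$, then $ib\in W_0\cap iW_0=0$. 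Since $\Ind(d\rho)$ is the $\mathbb{C}$-linear extension of $d\rho$, one has $d\Ind(\rho)(\mathfrak{h}^{\mathbb{C}})=\mathbb{C}\,d\rho(\mathfrak{h})$ and $\widetilde{d\Ind(\rho)}(S^3\mathfrak{h}^{\mathbb{C}})=\mathbb{C}\,\widetilde{d\rho}(S^3\mathfrak{h})$; intersecting both sides of $\mathbb{C}\,\widetilde{d\rho}(S^3\mathfrak{h})\subseteq\mathbb{C}\,d\rho(\mathfrak{h})$ with $W_0$ gives $\widetilde{d\rho}(S^3\mathfrak{h})\subseteq d\rho(\mathfrak{h})$, as desired. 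The same computation also re-derives $\dim d\rho(\mathfrak{h})=n$ directly from $\dim d\Ind(\rho)(\mathfrak{h}^{\mathbb{C}})=n$.

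I expect the main obstacle to be organizational rather than substantive: one must check that the appeal to Lemma~\ref{prop:orbit to complex contain} is legitimate for the split and compact forms — so that the Cartan subalgebra, weight lattice, root system and Weyl group used there are precisely the ones inherited from $\mathfrak{g}^{\mathbb{C}}$ — and that the weights of $\mathbb{V}$ genuinely take real, respectively purely imaginary, values on the chosen Cartan subalgebra $\mathfrak{h}$. Beyond the complex case already settled in Lemma~\ref{prop:orbit to complex contain}, the only new ingredient is the elementary real-structure argument above.
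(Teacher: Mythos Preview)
Your proposal is correct and follows essentially the same approach as the paper: both invoke Lemma~\ref{prop:orbit to complex contain} to obtain \eqref{prop: split compact back to real:item1}, \eqref{prop: split compact back to real:item2}, and the complexified inclusion $\widetilde{d\rho}(S^3\mathfrak{h})\otimes\mathbb{C}\subseteq d\rho(\mathfrak{h})\otimes\mathbb{C}$, then descend to $\mathfrak{h}$ using that the weights take real (split case) or purely imaginary (compact case) values on $\mathfrak{h}$. Your real-structure formulation via $W_0$ and $(\mathbb{C}A)\cap W_0=A$ is a clean repackaging of the paper's explicit ``take real parts of the coefficients'' computation.
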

\begin{proof}
Let $\rho: G \to \GL(\mathbb{V})$ be the irreducible representation of $G$ in Lemma~\ref{prop:orbit to complex contain}.  Then we have \eqref{prop: split compact back to real:item1},  \eqref{prop: split compact back to real:item2} and $\widetilde{d\rho} \left( S^3 \mathfrak{h} \right) \otimes \mathbb{C} \subseteq d\rho (\mathfrak{h}) \otimes \mathbb{C}$.  
\begin{itemize}
\item[$\diamond$] If $G$ is the split form,  then $\lambda(H) \in \mathbb{R}$ for any $\lambda \in \Lambda$ and $H\in \mathfrak{h}^{\mathbb{C}}$.  By \eqref{lem:rank-orbit:eq2},  $\widetilde{d\rho} \left( S^3 \mathfrak{h} \right) $ is a real subspace of $d\rho (\mathfrak{h}) \otimes \mathbb{C}$.  This implies $\widetilde{d\rho} \left( S^3 \mathfrak{h} \right) \subseteq d\rho (\mathfrak{h})$.  In fact,  let $H_1,\dots,  H_n$ be a basis of $\mathfrak{h}$.  Then for each $X \in S^3 \mathfrak{h}$,  we may write $\widetilde{d\rho} (X) = \sum_{j=1}^n c_j d\rho (H_j)$ for some $c_j \in \mathbb{C}$.  Since $\widetilde{d\rho} (X)$,  $d \rho(H_1),\dots,  d \rho(H_n)$ are real,  we must have $\widetilde{d\rho} (X) = \sum_{j=1}^n \Re(c_j) d\rho (H_j) \in d\rho(\mathfrak{h})$.  Here $\Re(z)$ is the real part of $z\in \mathbb{C}$.
\item[$\diamond$] If $G$ is the compact form,  then $\lambda(H) \in i \mathbb{R} $ for any $\lambda \in \Lambda$ and $H\in \mathfrak{h}^{\mathbb{C}}$.  By \eqref{lem:rank-orbit:eq2} again,  $i \widetilde{d\rho} \left( S^3 \mathfrak{h} \right) $ is a real subspace of $d\rho (\mathfrak{h}) \otimes \mathbb{C}$.  This indicates that $\widetilde{d\rho} \left( S^3 \mathfrak{h} \right) \subseteq d\rho (\mathfrak{h})$.  \qedhere
\end{itemize}
\end{proof}

Although the geometric condition in Theorem~\ref{thm:main} is only a necessary condition for the applicability of the Cayley transform,  it is also sufficient if the group is compact.
\begin{proposition}[Characterization IV]\label{cond: Lie alg 3 TFAE}
Let $G$ be a compact semisimple Lie group and let $\rho: G \to \GL(\mathbb{V})$ be an irreducible representation such that $d\rho$ is faithful.  The followings are equivalent:
\begin{enumerate}[(a)]
\item $\rho$ is a Cayley representation.  \label{cond: Lie alg 3 TFAE:item1}
\item $ \widetilde{d\rho}\left( S^3 \mathfrak{h} \right) \subseteq d\rho (\mathfrak{h})$.  \label{cond: Lie alg 3 TFAE:item2}
\item $\rho$ is a representation with Cayley configuration.
 \label{cond: Lie alg 3 TFAE:item3}
\end{enumerate}
In particular,  if $G$ is compact and simple then \eqref{cond: Lie alg 3 TFAE:item1}--\eqref{cond: Lie alg 3 TFAE:item3} are equivalent for all irreducible representations of $G$.
\end{proposition}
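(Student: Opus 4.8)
The plan is to obtain the three equivalences by assembling the characterizations already proved, reducing the sufficiency direction to the simply connected case via the universal cover. First I would check that the earlier theorems apply to our $G$. Since $G$ is compact semisimple, the compact-form root space decomposition recalled at the beginning of Section~\ref{sec:semisimple} gives $\mathfrak{g}_0 = \mathfrak{h}$, and Proposition~\ref{prop: compact form generates} gives $\mathfrak{g} = \Ad(G)(\mathfrak{h})$; thus $G$ meets the hypotheses of Theorem~\ref{cond:simple}, which already yields the equivalence \eqref{cond: Lie alg 3 TFAE:item1} $\Leftrightarrow$ \eqref{cond: Lie alg 3 TFAE:item2} (this step does not use faithfulness of $d\rho$). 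Since $d\rho$ is faithful and $\mathfrak{h}\subseteq\mathfrak{g}$, the restriction $d\rho|_{\mathfrak{h}}$ is injective, so $\dim_{\RR}(d\rho(\mathfrak{h})) = \dim_{\RR}(\mathfrak{h}) \eqqcolon n$, and the hypotheses of Theorem~\ref{thm:main} hold as well.

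Next I would prove \eqref{cond: Lie alg 3 TFAE:item1} $\Rightarrow$ \eqref{cond: Lie alg 3 TFAE:item3}. Assuming $\rho$ is Cayley, Theorem~\ref{thm:main} forces the weight set $\{\lambda\in\Lambda:\mathbb{V}_\lambda\ne 0\}$ to be $\mathcal{O}_{\omega_1}$ or $\mathcal{O}_{\omega_1}\sqcup\{0\}$, with $\mathcal{O}_{\omega_1}$ symmetric about the origin, of rank $n$, and of the form $\{\pm\omega_1,\dots,\pm\omega_n\}$. As $\rho$ is irreducible, $\dim\mathbb{V}_{\omega_1} = 1$, and the Weyl group permutes weight spaces isomorphically, so the multiplicity function is identically $1$ on $\mathcal{O}_{\omega_1}$; this is exactly the weight-diagram configuration of Corollary~\ref{rmk:polytope}, i.e.\ $\rho$ has Cayley configuration.

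It then remains to show \eqref{cond: Lie alg 3 TFAE:item3} $\Rightarrow$ \eqref{cond: Lie alg 3 TFAE:item2}. I would pass to the universal cover $\pi_0: G_0\to G$, which is again a (now simply connected) compact semisimple group with the same Lie algebra and Cartan subalgebra; then $d(\rho\circ\pi_0) = d\rho$ and $\rho\circ\pi_0$ is irreducible with the same weight diagram, hence also with Cayley configuration. Writing $\mathcal{O}\coloneqq\{\pm\omega_1,\dots,\pm\omega_n\}$ for the $\W$-orbit occurring in this diagram, labelled so that $\omega_1$ is dominant, $\rho\circ\pi_0$ is the irreducible representation of $G_0$ with highest weight $\omega_1$, and by Theorem~\ref{thm: FTRTSLA}(b) its weight set equals $\Conv(\mathcal{O})\cap(\omega_1+\mathbb{Z}\Phi)\subseteq\mathcal{O}\cup\{0\}$. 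Hence the numerical hypotheses of Proposition~\ref{prop: split compact back to real} are met, and that proposition produces the irreducible representation of $G_0$ with highest weight $\omega_1$ --- namely $\rho\circ\pi_0$ itself --- satisfying $\widetilde{d(\rho\circ\pi_0)}(S^3\mathfrak{h})\subseteq d(\rho\circ\pi_0)(\mathfrak{h})$. Since $d(\rho\circ\pi_0) = d\rho$, this is \eqref{cond: Lie alg 3 TFAE:item2}, closing the cycle \eqref{cond: Lie alg 3 TFAE:item1}$\Rightarrow$\eqref{cond: Lie alg 3 TFAE:item3}$\Rightarrow$\eqref{cond: Lie alg 3 TFAE:item2}$\Rightarrow$\eqref{cond: Lie alg 3 TFAE:item1}. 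For the last assertion, when $G$ is simple $\ker(d\rho)$ is an ideal of the simple algebra $\mathfrak{g}$, so $d\rho$ is faithful for every nontrivial irreducible $\rho$ and the faithfulness hypothesis becomes automatic.

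The argument is essentially a reassembly of earlier results, so I do not expect a conceptual obstacle; the care required is bookkeeping --- justifying the passage to the universal cover (compactness and simple-connectedness of $G_0$, invariance of $d\rho$ and of the weight diagram, uniqueness of irreducibles by highest weight) and checking that the abstract notion of ``Cayley configuration'' translates precisely into the symmetry, rank, cardinality, and convex-hull hypotheses demanded by Proposition~\ref{prop: split compact back to real}.
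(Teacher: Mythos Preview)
Your proposal is correct and follows essentially the same route as the paper: compactness gives $\mathfrak{g}_0=\mathfrak{h}$ and Proposition~\ref{prop: compact form generates} gives $\mathfrak{g}=\Ad(G)(\mathfrak{h})$, so Theorem~\ref{cond:simple} yields \eqref{cond: Lie alg 3 TFAE:item1}$\Leftrightarrow$\eqref{cond: Lie alg 3 TFAE:item2}; Theorem~\ref{thm:main} gives \eqref{cond: Lie alg 3 TFAE:item1}$\Rightarrow$\eqref{cond: Lie alg 3 TFAE:item3}; and the passage to the simply connected cover plus Proposition~\ref{prop: split compact back to real} closes the loop. The only cosmetic difference is that the paper closes with \eqref{cond: Lie alg 3 TFAE:item3}$\Rightarrow$\eqref{cond: Lie alg 3 TFAE:item1} (invoking Proposition~\ref{prop: group to Lie algebra} to descend from $\rho\circ\pi$ to $\rho$), whereas you close with \eqref{cond: Lie alg 3 TFAE:item3}$\Rightarrow$\eqref{cond: Lie alg 3 TFAE:item2} via $d(\rho\circ\pi_0)=d\rho$; both are equally valid.
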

\begin{proof}
Since $G$ is compact,  we have $\mathfrak{g}_0 = \mathfrak{h}$.  Moreover,  if $G$ is also simple then all its non-trivial representations are faithful.  By Proposition \ref{prop: compact form generates},  the equivalence between \eqref{cond: Lie alg 3 TFAE:item1} and \eqref{cond: Lie alg 3 TFAE:item2} follows from Theorem \ref{cond:simple}.  Thus,  it is left to establish the equivalence between \eqref{cond: Lie alg 3 TFAE:item2} and \eqref{cond: Lie alg 3 TFAE:item3}.  If \eqref{cond: Lie alg 3 TFAE:item2} holds,  then $\rho$ is Cayley and  Theorem~\ref{thm:main} leads to \eqref{cond: Lie alg 3 TFAE:item3}.  Conversely,  {we consider the universal covering $\pi: \widetilde{G}  \to G$ of $G$.  Then $\rho \circ \pi$ is a representation of $\widetilde{G}$ with Cayley configuration.  Proposition~\ref{prop: split compact back to real} implies that \eqref{cond: Lie alg 3 TFAE:item2} and hence \eqref{cond: Lie alg 3 TFAE:item1} holds for $\rho \circ \pi$.  By Proposition~\ref{prop: group to Lie algebra},  we conclude that $\rho$ is a Cayley representation.} 
\end{proof}

We notice that Proposition~\ref{cond: Lie alg 3 TFAE} is true for any semisimple Lie group $G$ such that $\mathfrak{g}_0 = \mathfrak{h}$ and $\mathfrak{g} = \Ad(G) (\mathfrak{h})$.  If $\mathbb{F} = \mathbb{C}$ or $G$ is a split real form,  then $\mathfrak{g}_0 = \mathfrak{h}$ always holds.  However,  the next proposition implies that it is impossible to have $\mathfrak{g} = \Ad(G) (\mathfrak{h})$ in these two cases.  
\begin{proposition}
If $G$ is either a complex semisimple Lie group or a split real form,  then there exists $x \in \mathfrak{g}$ such that $x \notin \Ad(G)(\mathfrak{h})$.
\end{proposition}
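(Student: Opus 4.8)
The plan is to produce a single nonzero element of $\mathfrak{g}$ that is $\ad$-nilpotent and to observe that $\Ad(G)(\mathfrak{h})$ consists entirely of $\ad$-semisimple elements; these two facts together give the conclusion. I would handle the complex case and the split real case in parallel, since the argument is identical once the appropriate root space decomposition is in place.

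First I would record the structural input. If $\mathbb{F} = \CC$ then $\mathfrak{g} = \mathfrak{h} \oplus \bigoplus_{\alpha \in \Phi}\mathfrak{g}_\alpha$; if $\mathfrak{g}$ is a split real form then, as recalled before Lemma~\ref{lem: a3 to a}, one has $\mathfrak{g}_0 = \mathfrak{h}$ and $\mathfrak{g} = \mathfrak{h} \oplus \bigoplus_{\sigma \in \Sigma}\mathfrak{g}_\sigma$, the root (resp.\ restricted root) set being nonempty exactly because $\mathfrak{g}$ is semisimple. For any $H \in \mathfrak{h}$ the operator $\ad(H)$ acts on each root (resp.\ restricted root) space by a scalar, so $\ad(H)$ is semisimple; moreover for every $g \in G$ the map $\Ad(g)$ is a Lie algebra automorphism with $\Ad(g)\,\ad(H)\,\Ad(g)^{-1} = \ad(\Ad(g)H)$, so $\ad(x)$ is semisimple for every $x \in \Ad(G)(\mathfrak{h})$. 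I would stress that this uses only the automorphism property of $\Ad(g)$ and the invariance of semisimplicity under conjugation, hence needs no connectedness hypothesis on $G$.

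Next I would fix one root $\alpha \in \Phi$ (resp.\ restricted root $\sigma \in \Sigma$) and a nonzero vector $X$ in $\mathfrak{g}_\alpha$ (resp.\ $\mathfrak{g}_\sigma$). Since $\ad(X)$ carries $\mathfrak{g}_\beta$ into $\mathfrak{g}_{\beta + \alpha}$ and the weight set is finite, $\ad(X)$ is nilpotent; and $\ad(X) \ne 0$ because $\ad$ is faithful on the semisimple Lie algebra $\mathfrak{g}$. If we had $X = \Ad(g)H$ for some $g \in G$, $H \in \mathfrak{h}$, then $\ad(X)$ would be conjugate to $\ad(H)$, hence simultaneously nilpotent and semisimple, forcing $\ad(H) = 0$, then $H = 0$, then $X = 0$ — a contradiction. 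Therefore $X \notin \Ad(G)(\mathfrak{h})$, which is the assertion.

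I do not expect a genuine obstacle here; the only points requiring attention are bookkeeping ones: in the split case one must invoke the restricted root space decomposition together with the identity $\mathfrak{g}_0 = \mathfrak{h}$ rather than the absolute root decomposition (which lives only over $\CC$), and one must make the observation that $\Ad(G)(\mathfrak{h})$ lies inside the set of $\ad$-semisimple elements uniformly in the number of components of $G$. As a remark, this proposition is precisely what confines Proposition~\ref{cond: Lie alg 3 TFAE} to the compact real form: for complex groups and for split real forms the hypothesis $\mathfrak{g} = \Ad(G)(\mathfrak{h})$ appearing in Theorem~\ref{cond:simple} can never be met.
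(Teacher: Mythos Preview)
Your argument is correct and is essentially the same as the paper's: both pick a nonzero root vector $X$, use that $\ad(X)$ is nilpotent because it shifts root spaces, and derive a contradiction from $X=\Ad(g)H$ via the conjugacy $\ad(\Ad(g)H)=\Ad(g)\,\ad(H)\,\Ad(g)^{-1}$ together with the semisimplicity of $\ad(H)$ and the faithfulness of $\ad$. You make the ``nilpotent $+$ semisimple $\Rightarrow$ zero'' step more explicit than the paper does, but the route is the same.
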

\begin{proof}
If  $G$ is either a complex semisimple Lie group or a split real form,  then we have the root space decomposition $\mathfrak{g} = \mathfrak{h} \bigoplus \left( \bigoplus_{\alpha \in \Phi} \mathfrak{g}_{\alpha} \right)$.  Let $\Ad: G \to \GL(\mathfrak{g})$ (resp.  $\ad: \mathfrak{g} \to \mathfrak{gl}(\mathfrak{g})$) be the adjoint representation of $G$ (resp.  $\mathfrak{g}$).  Given $\alpha \in \Phi$,  we pick a nonzero $X_{\alpha} \in \mathfrak{g}_{\alpha}$.  Since $\ad(X_{\alpha})(\mathfrak{g}_{\beta})  \subseteq \mathfrak{g}_{\alpha + \beta}$ for any $\beta \in \Phi$ and $\mathfrak{g}$ is finite dimensional,  there exists some integer $N \ge 0$ such that $\ad(X_{\alpha})^N (\mathfrak{g}_{\beta}) = 0$ for any $\beta \in \Phi$.  Therefore,  if $X_\alpha = \Ad(g)(H)$ for some $g \in G$ and $H\in \mathfrak{h}$,  then 
\[
0 = \ad(X_{\alpha})^N = \ad \left( \Ad(g)(H) \right)^N = \left( \Ad(g) \ad(H) \Ad(g)^{-1} \right)^N = \Ad(g) \ad(H)^N \Ad(g)^{-1}.
\]
This implies that $\ad(H) = 0$.  Since $\ad$ is injective,  we obtain $X_\alpha = H = 0$ which contradicts to the choice of $X_{\alpha}$. 
\end{proof}
For a concrete example,  we consider $G = \SL_2(\mathbb{F})$.  Then $\mathfrak{h}= \spn_{\FF} \{ H \}$ where $H = \begin{bsmallmatrix} 1 & 0 \\ 0 & -1 \end{bsmallmatrix}$.  It is obvious that $\Ad(\SL_2(\FF)) (\mathfrak{h}) \ne \mathfrak{sl}_2(\FF)$ as $X \not\in \Ad(\SL_2(\FF)) (\mathfrak{h})$ where $X = \begin{bsmallmatrix} 0 & 1 \\ 0 & 0 \end{bsmallmatrix}$.
\subsection{The Cayley transform on general representations}
In this subsection,  we discuss the applicability of the Cayley transform on an arbitrary representation of a semisimple Lie group $G$.  We recall that every finite dimensional representation $\rho: G \to \GL(\mathbb{V})$ admits a decomposition $\rho = \prod_{j=1}^m \rho_j: G \to \prod_{j=1}^m \GL(\mathbb{V}_j)$ where $\mathbb{V} = \bigoplus_{j=1}^m \mathbb{V}_j$ and for each $1 \le j \le m$,  $\rho_j: G \to \GL(\mathbb{V}_j)$ is an irreducible representation.
\begin{proposition}[Characterization V]\label{prop:reducible+faithful}
Let $G,  \mathbb{V}, \rho,  \rho_1,\dots,  \rho_m$ be as above.  The followings are equivalent: 
\begin{enumerate}[(a)]
\item $\rho$ is Cayley.    \label{prop:reducible+faithful:item1}
\item For each $1 \le j \le m$,  $\rho_j$ is Cayley.  \label{prop:reducible+faithful:item3}
\item There exists a semisimple Lie group $G'$ and a representation $\rho': G' \to \GL(\mathbb{V})$ such that $d\rho'$ is faithful,  $d\rho' (\mathfrak{g}') = d\rho (\mathfrak{g})$ and $\rho'$ is Cayley.   \label{prop:reducible+faithful:item2}
\item There exists a semisimple Lie group $G'$ and a faithful representation $\rho': G' \to \GL(\mathbb{V})$ such that $d\rho' (\mathfrak{g}') = d\rho (\mathfrak{g})$ and $\rho'$ is Cayley.   
\label{prop:reducible+faithful:item4}
\end{enumerate}
\end{proposition}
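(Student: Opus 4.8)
The plan is to funnel all four clauses through the single condition that the image subalgebra $\mathfrak{u} \coloneqq d\rho(\mathfrak{g}) \subseteq \mathfrak{gl}(\mathbb{V})$ has the power span property; by Theorem~\ref{cond:3} this is precisely what Cayley-ness of any representation means, and --- decisively --- it is a condition on the subalgebra alone, blind to the group presenting it. I first record that $\mathfrak{u}$ is itself semisimple, since $\ker(d\rho)$ is an ideal of the semisimple $\mathfrak{g}$, so $\mathfrak{u}\cong\mathfrak{g}/\ker(d\rho)$.

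For $\eqref{prop:reducible+faithful:item1}\iff\eqref{prop:reducible+faithful:item3}$: the implication $\eqref{prop:reducible+faithful:item3}\implies\eqref{prop:reducible+faithful:item1}$ follows from Proposition~\ref{prop:basicprop}~\eqref{prop:basicprop:itemc} by induction on $m$, writing $\rho = (\bigoplus_{j=1}^{m-1}\rho_j)\times\rho_m$. Conversely, the key point is that the block projection $p_j: \bigoplus_{l=1}^{m}\mathfrak{gl}(\mathbb{V}_l)\to\mathfrak{gl}(\mathbb{V}_j)$ is a homomorphism of \emph{associative} algebras --- not merely of Lie algebras --- carrying $d\rho(\mathfrak{g})$ onto $d\rho_j(\mathfrak{g})$. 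Hence for $a = p_j(u)$ with $u\in d\rho(\mathfrak{g})$ we get $a^{2k+1} = p_j(u^{2k+1})$, and $u^{2k+1}\in d\rho(\mathfrak{g})$ by the power span property forces $a^{2k+1}\in d\rho_j(\mathfrak{g})$; so each $d\rho_j(\mathfrak{g})$ inherits the power span property and each $\rho_j$ is Cayley by Theorem~\ref{cond:3}.

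For $\eqref{prop:reducible+faithful:item1}\iff\eqref{prop:reducible+faithful:item4}\iff\eqref{prop:reducible+faithful:item2}$: assuming $\eqref{prop:reducible+faithful:item1}$, the subalgebra $\mathfrak{u}$ is a semisimple Lie subalgebra of $\mathfrak{gl}(\mathbb{V})$, so Proposition~\ref{prop:semisimple algebraic} produces an irreducible algebraic subgroup $G'\subseteq\GL(\mathbb{V})$ with Lie algebra $\mathfrak{u}$; since $\mathfrak{u}$ is semisimple, $G'$ is a semisimple Lie group. Taking $\rho'$ to be the inclusion $G'\hookrightarrow\GL(\mathbb{V})$, both $\rho'$ and $d\rho'$ are faithful, $d\rho'(\mathfrak{g}') = \mathfrak{u} = d\rho(\mathfrak{g})$, and $d\rho'(\mathfrak{g}')$ has the power span property because $\rho$ does, so $\rho'$ is Cayley by Theorem~\ref{cond:3}; this one construction witnesses both $\eqref{prop:reducible+faithful:item4}$ and $\eqref{prop:reducible+faithful:item2}$. (More generally $\eqref{prop:reducible+faithful:item4}\implies\eqref{prop:reducible+faithful:item2}$ always, as a faithful morphism of Lie groups has injective differential: $\ker(d\rho')$ is the Lie algebra of the trivial group $\ker\rho'$.) Finally $\eqref{prop:reducible+faithful:item2}\implies\eqref{prop:reducible+faithful:item1}$ is immediate: $\rho'$ Cayley makes $d\rho(\mathfrak{g}) = d\rho'(\mathfrak{g}')$ have the power span property, so $\rho$ is Cayley, once more by Theorem~\ref{cond:3}.

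I expect the two genuinely non-formal ingredients to be Proposition~\ref{prop:basicprop}~\eqref{prop:basicprop:itemc}, used for $\eqref{prop:reducible+faithful:item3}\implies\eqref{prop:reducible+faithful:item1}$, and the existence half of $\eqref{prop:reducible+faithful:item1}\implies\eqref{prop:reducible+faithful:item4}$ --- the latter is the only place the semisimplicity hypothesis really earns its keep, since it is what lets us integrate the image subalgebra $d\rho(\mathfrak{g})$ to an algebraic subgroup of $\GL(\mathbb{V})$ via Proposition~\ref{prop:semisimple algebraic}. With those in hand, all remaining implications are bookkeeping about the power span property through Theorem~\ref{cond:3}.
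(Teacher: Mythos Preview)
Your proof is correct and shares the paper's overall strategy: funnel everything through the power span property of $d\rho(\mathfrak{g})$ via Theorem~\ref{cond:3}. The differences are tactical. For \eqref{prop:reducible+faithful:item3}$\implies$\eqref{prop:reducible+faithful:item1} you invoke Proposition~\ref{prop:basicprop}\eqref{prop:basicprop:itemc} and induct, whereas the paper argues directly that the power span property of the block-diagonal image is equivalent to that of each block; your projection argument is exactly how the paper handles the forward direction, but your use of Proposition~\ref{prop:basicprop}\eqref{prop:basicprop:itemc} for the converse is arguably cleaner than the paper's terse appeal to the (not literally true) equality $d\rho(\mathfrak{g})=\bigoplus_j d\rho_j(\mathfrak{g})$. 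For \eqref{prop:reducible+faithful:item1}$\implies$\eqref{prop:reducible+faithful:item4} you produce a faithful $\rho'$ in one shot via the algebraic subgroup of Proposition~\ref{prop:semisimple algebraic}; the paper instead first constructs \eqref{prop:reducible+faithful:item2} by taking the simply connected group with Lie algebra $d\rho(\mathfrak{g})$ and the representation integrating the inclusion, and only then passes to the quotient by the (discrete) kernel to obtain \eqref{prop:reducible+faithful:item4}. Your route saves the quotient step at the cost of a heavier external input; the paper's is more elementary and self-contained.
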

\begin{proof}
By Theorem~\ref{cond:3},  $\rho$ is Cayley if and only if $d \rho(\mathfrak{g})$ has the power span property.  The implication \eqref{prop:reducible+faithful:item2} $\implies$ \eqref{prop:reducible+faithful:item1} is obvious.  If \eqref{prop:reducible+faithful:item1} holds,  we let $G'$ be the simply connected Lie group such that $\mathfrak{g}' = d \rho(\mathfrak{g}) \subseteq \mathfrak{gl}(\mathbb{V})$ and let $\rho': G' \to \GL(\mathbb{V})$ be the corresponding representation of $G'$ on $\mathbb{V}$.  Then it is clear that $\rho'$ is Cayley,  thus we have \eqref{prop:reducible+faithful:item1} $\implies$ \eqref{prop:reducible+faithful:item2}.

Since $\rho = \bigoplus_{j=1}^m \rho_j$,  we have $d \rho (\mathfrak{g}) = \bigoplus_{j=1}^m d \rho_j (\mathfrak{g})$.  Therefore,  $d \rho(\mathfrak{g})$ has the power span property if and only if for each $1 \le j \le m$,  $d \rho_j (\mathfrak{g})$ has the power span property which is equivalent to the applicability of the Cayley transform to $\rho_j$,  by Theorem~\ref{cond:3} again.  This proves the equivalence between \eqref{prop:reducible+faithful:item1} and \eqref{prop:reducible+faithful:item3}.  

The implication \eqref{prop:reducible+faithful:item4} $\implies$ \eqref{prop:reducible+faithful:item2} is trivial.  Suppose $G_1$ and $\theta: G_1 \to \GL(\mathbb{V})$ satisfy conditions in \eqref{prop:reducible+faithful:item2}.  Since $d\theta$ is faithful,  $\ker(\theta)$ is a discrete normal subgroup of $G_1$.  Denote $G' \coloneqq G_1/\ker(\theta)$.  We define $\rho': G' \to \GL(\mathbb{V})$ by $\rho' \left( [g_1] \right) \coloneqq \theta(g_1)$ where $[g_1]$ denotes the element in $G'$ represented by $g_1 \in G_1$.  By definition,  $\rho'$ is well-defined and faithful.  We also observe that $\mathfrak{g}'=\mathfrak{g}_1$ and $d\rho' =d\theta$.  Therefore,  $\rho'$ is a representation satisfying \eqref{prop:reducible+faithful:item4}.
\end{proof}
According to Proposition~\ref{prop:reducible+faithful},  the applicability of the Cayley transform to $\rho: G\to \GL(\mathbb{V})$ reduces to the applicability of the Cayley transform to $\rho_j': G' \to \GL(\mathbb{V}_j)$ where $\rho' = \bigoplus_{j=1}^m \rho'_j$,  $\mathbb{V} = \bigoplus_{j=1}^m \mathbb{V}_j$ and for each $1 \le j \le m$,  $\rho'_j: G' \to \GL(\mathbb{V}_j)$ is an irreducible faithful representation.  Moreover,  we observe that $\widetilde{d\rho} \left( S^3 \mathfrak{h} \right) \subseteq d \rho (\mathfrak{h})$ if and onlyf if $\widetilde{d\rho_j} \left( S^3 \mathfrak{h} \right) \subseteq d \rho_j (\mathfrak{h})$,  $1 \le j \le m$.  This together with Theorem~\ref{thm:main} and Proposition~\ref{cond: Lie alg 3 TFAE}  leads to the proposition that follows.
\begin{proposition}[Characterization VI]\label{prop:Characterization VI}
Let $G,  G'$,  $\mathbb{V},  \mathbb{V}_1,\dots,  \mathbb{V}_m$ and $\rho,  \rho', \rho'_1,  \dots,  \rho'_m$ be as above.  The followings hold:
\begin{enumerate}[(a)]
\item If $\mathfrak{g}_0 = \mathfrak{h}$ and $\rho$ is Cayley,  then for each $1 \le j \le m$,  $\rho'_j$ is a representation with Cayley configuration.
\item For a compact semisimple Lie group $G$,  the followings are equivalent:
\begin{enumerate}[(i)]
\item $\rho$ is Cayley. 
\item $\widetilde{d\rho} \left( S^3 \mathfrak{h} \right) \subseteq d \rho (\mathfrak{h})$.
\item For each $1 \le j \le m$,  $\rho'_j$ is a representation with Cayley configuration.
\end{enumerate}

\end{enumerate} 
\end{proposition}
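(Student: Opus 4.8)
The plan is to assemble the statement from the earlier characterizations, the key device being Proposition~\ref{prop:reducible+faithful}. It lets us replace $\rho$ by the representation $\rho'=\bigoplus_{j=1}^{m}\rho'_j$ of the auxiliary semisimple group $G'$, for which $d\rho'(\mathfrak g')=d\rho(\mathfrak g)$ and $\rho'$ is Cayley if and only if $\rho$ is, and then to apply the geometric results of this section to the irreducible faithful pieces $\rho'_j$.

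First I would check that the standing hypotheses descend from $G$ to $G'$. Since $\mathfrak k\coloneqq\ker d\rho$ is an ideal of the semisimple algebra $\mathfrak g$, we may write $\mathfrak g=\mathfrak k\oplus\mathfrak k'$ with $\mathfrak k'$ an ideal on which $d\rho$ restricts to an isomorphism onto $\mathfrak g'$. Choosing the Cartan subalgebra of $\mathfrak g'$ to be $\mathfrak h'\coloneqq d\rho(\mathfrak h)$, which is a Cartan subalgebra of $d\rho(\mathfrak g)$, we have $d\rho'(\mathfrak h')=d\rho(\mathfrak h)$; decomposing $\mathfrak h$ compatibly with $\mathfrak g=\mathfrak k\oplus\mathfrak k'$ and comparing the centralizers of $\mathfrak h$, $\mathfrak h\cap\mathfrak k$ and $\mathfrak h\cap\mathfrak k'$, we see that $\mathfrak g_0=\mathfrak h$ forces $\mathfrak g'_0=\mathfrak h'$. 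Moreover, if $G$ is compact then $\mathfrak g'\simeq\mathfrak k'$ is again a compact semisimple Lie algebra, so the simply connected group $G'$ is compact; and since each $\rho'_j$ is faithful, $\dim_{\FF}d\rho'_j(\mathfrak h')=\dim_{\FF}\mathfrak h'$.

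Granting this, part~(a) is immediate: if $\rho$ is Cayley then each $\rho'_j$ is Cayley by Proposition~\ref{prop:reducible+faithful}, and $\rho'_j$ is an irreducible representation of the semisimple group $G'$ with $\mathfrak g'_0=\mathfrak h'$ and $\dim_{\FF}d\rho'_j(\mathfrak h')=\dim_{\FF}\mathfrak h'$, so Theorem~\ref{thm:main} (equivalently, Corollary~\ref{rmk:polytope}) shows $\rho'_j$ has Cayley configuration. For part~(b), a compact semisimple $G$ satisfies $\mathfrak g_0=\mathfrak h$ and $\mathfrak g=\Ad(G)(\mathfrak h)$ by Proposition~\ref{prop: compact form generates}, so Theorem~\ref{cond:simple} gives (i)$\Leftrightarrow$(ii) directly; and (i)$\Leftrightarrow$(iii) follows by combining Proposition~\ref{prop:reducible+faithful} ($\rho$ is Cayley iff every $\rho'_j$ is Cayley) with Proposition~\ref{cond: Lie alg 3 TFAE} applied to each irreducible faithful $\rho'_j$ on the compact semisimple group $G'$, which is exactly the equivalence between the Cayley property of $\rho'_j$ and its having Cayley configuration.

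The part I expect to require the most care is the descent recorded in the second paragraph: one must verify that $\mathfrak g_0=\mathfrak h$, and compactness in part~(b), genuinely pass from $G$ to $G'$; that $d\rho(\mathfrak h)$ really is a Cartan subalgebra of $d\rho(\mathfrak g)$; and that $\widetilde{d\rho}(S^3\mathfrak h)\subseteq d\rho(\mathfrak h)$ is equivalent to the analogous inclusion for each $\rho'_j$ — the last being the observation recorded just before the statement, which uses that the inclusion respects the decomposition $\mathbb{V}=\bigoplus_{j}\mathbb{V}_j$. Everything else is a direct appeal to Theorems~\ref{thm:main} and \ref{cond:simple} and Proposition~\ref{cond: Lie alg 3 TFAE}.
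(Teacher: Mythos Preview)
Your proposal is correct and follows essentially the same route as the paper: reduce via Proposition~\ref{prop:reducible+faithful} to the irreducible faithful pieces $\rho'_j$ on $G'$, use the block-diagonal observation that $\widetilde{d\rho}(S^3\mathfrak{h})\subseteq d\rho(\mathfrak{h})$ holds iff it holds componentwise, and then invoke Theorem~\ref{thm:main} and Proposition~\ref{cond: Lie alg 3 TFAE}. Your explicit verification that $\mathfrak g'_0=\mathfrak h'$, compactness, and faithfulness of $d\rho'_j$ descend to $G'$ is more careful than the paper, which leaves these points implicit.
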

\section{The Cayley transform for simple Lie groups}\label{sec:simple}
In this section,  we consider the applicability of the Cayley transform to a representation $\rho: G \to \GL(\mathbb{V})$ of a \emph{classical simple} Lie group $G$.  In fact,  one can prove that the Cayley transform is not applicable to \emph{exceptional} simple Lie groups by similar calculations in Subsection~\ref{subsec:CayleyRep}.  This is also a direct consequence of \cite[Theorem 1.31]{LPR06}.

We notice that the Lie algebra $\mathfrak{g}$ of $G$ is a simple Lie algebra,  thus $d \rho:  \mathfrak{g} \to \mathfrak{gl}(\mathbb{V})$ is faithful unless $d\rho (\mathfrak{g}) = \{ 0 \}$.  The latter case indicates that $\rho$ is the trivial representation.  In the sequel,  we suppose $\rho$ is non-trivial so that $d\rho$ is faithful.  
\subsection{Representations with Cayley configuration}\label{subsec:CayleyRep}
We first classify Cayley representions of \emph{classical complex simple} Lie groups.  It is well-known \cite[Theorem~21.11]{fulton2013representation} that they are classified into four types: $A_n,  B_n, C_n$ and $D_n$.  Hence we may split our discussion correspondingly.  For ease of reference,  we record in Table~\ref{tab:simple} the information of the root system for each type.  Here $\mathfrak{h}$ is the Cartan subalgebra of $\mathfrak{g}$,  $\Lambda$ is the weight lattice,  $\Phi^+$ is the set of positive roots,  $\Delta^+$ is the set of simple roots,  $C = \{ \sum_{j=1}^n a_j L_j \}$ is the fundamental (closed) Weyl chamber,  $\W$ is the Weyl group and $\delta \coloneqq \frac{1}{2} \sum_{j=1}^n L_j $.  Moreover,  in the last line of Table~\ref{tab:simple},  we also list the standard representation $\mathbb{W}$ of each type together with its weights.
\begin{table}[h]
\footnotesize
\tabulinesep=0.5ex
\begin{tabu}{l|l|l|l|l}
\textsc{type} & $A_n$ ($\mathfrak{sl}_{n+1}$, $n \ge 1$) &  $B_n$ ($\mathfrak{so}_{2n+1}$, $n \ge2$) &  $C_n$ ($\mathfrak{sp}_{2n}$, $n\ge 3$) &  $D_n$ ($\mathfrak{so}_{2n}$,  $n\ge 4$)  \\\hline\hline
$\dim \mathfrak{h}$ &  $n$ & $n$   & $n$   &  $n$ \\\hline
$\Lambda$ & $\mathbb{Z}\{ L_j: \sum_{j=1}^{n+1} L_j=0 \}_{j=1}^{n+1}$ & $\mathbb{Z}\{ \delta,  L_j\}_{j=1}^{n}$ &     $\mathbb{Z}\{ L_j\}_{j=1}^{n}$ & $\mathbb{Z}\{\delta,  L_j\}_{j=1}^{n}$  \\\hline
$\Phi^+$ & $L_j - L_k$,\; $j < k$ & $L_i$,  \; $L_j \pm L_k$,\; $j < k$ & $2 L_i$,  $L_j \pm L_k$,\; $j < k$ & $ L_j \pm L_k$ ,\; $j < k$ \\\hline
$\Delta^+$ & $L_j - L_{j+1}$ & $L_n$,  \; $L_j - L_{j+1}$ & $2 L_n$,  \; $L_j - L_{j+1}$ &$L_{n - 1} + L_n$, \; $ L_j - L_{j+1}$ \\\hline
$C$ & $a_{j} \ge a_{j+1}$,\; $a_{n+1} = 0$ &  $a_j \ge a_{j+1} \ge 0$ & $a_j \ge a_{j+1} \ge 0$ & $a_1 \ge \cdots \ge  a_{n-1} \ge |a_n|$   \\\hline
$\W$ & $\mathfrak{S}_{n+1}$ &  $ \W/ (\mathbb{Z}/2\mathbb{Z})^n \simeq \mathfrak{S}_n$ &   $ \W/ (\mathbb{Z}/2\mathbb{Z})^n \simeq \mathfrak{S}_n$  & $ \W/ (\mathbb{Z}/2\mathbb{Z})^{n-1} \simeq \mathfrak{S}_n$ \\\hline
$\mathbb{W}$ & $\mathbb{C}^{n+1}$ (weights: $L_j$) & $\mathbb{C}^{2n+1}$ (weights: $0$,  $\pm L_j$) &   $\mathbb{C}^{2n}$ (weights: $\pm L_j$)  & $\mathbb{C}^{2n}$ (weights: $\pm L_j$)\\\hline
\end{tabu}
\bigskip
\caption{Root system of a complex simple Lie group}
\label{tab:simple}
\end{table}

\subsubsection{Type $A_n$ ($ n \ge 1$)}

\begin{lemma}\label{prop:adjoint}
Let $G$ be a complex simple Lie group of type $A_n$ and let $\rho: G \to \GL (\mathbb{V})$ be a representation with Cayley configuration.  If $\mathbb{V}_0 \ne 0$,  then $n = 1$ and $\rho$ is the adjoint representation.
\end{lemma}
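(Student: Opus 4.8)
The plan is to use the rigidity that a Cayley configuration imposes on the weight diagram, together with the fact that in type $A_n$ the Weyl group acts transitively on roots. Write $\omega_1$ for the highest weight of $\rho$, $\mathcal{O} \coloneqq \W\omega_1$ for its Weyl orbit, and $n \coloneqq \dim\mathfrak{h}$; since a representation with Cayley configuration has a weight set of full rank $n \ge 1$, it is nontrivial, so $\omega_1 \ne 0$.

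First I would unpack the hypothesis. By Corollary~\ref{rmk:polytope} the weight set of $\mathbb{V}$ has $2n$ or $2n+1$ elements, $2n$ of which form a single $\W$-orbit carrying multiplicity $1$; as $\omega_1$ is a nonzero weight it lies in this orbit, so the orbit equals $\mathcal{O}$ and $|\mathcal{O}| = 2n$. Since $\mathbb{V}_0 \ne 0$, the origin is a weight, so the $(2n+1)$-element case occurs and the weight set of $\mathbb{V}$ is exactly $\mathcal{O} \sqcup \{0\}$; in particular every nonzero weight of $\mathbb{V}$ lies in $\mathcal{O}$.

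The crux is to identify $\rho$ with the adjoint representation. Because $0$ is a weight of the irreducible highest-weight module $\mathbb{V}$, there is a chain $\omega_1 = \mu_0, \mu_1, \dots, \mu_r = 0$ of weights of $\mathbb{V}$ with $\mu_i = \mu_{i-1} - \alpha^{(i)}$ for simple roots $\alpha^{(i)}$ (the standard fact that every weight is reached from the highest weight by successively subtracting simple roots while staying among the weights). Here $r \ge 1$ since $\omega_1 \ne 0$, and $\mu_{r-1} = \mu_r + \alpha^{(r)} = \alpha^{(r)}$ is a simple root; being a nonzero weight, it lies in $\mathcal{O} = \W\omega_1$. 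Hence $\omega_1$, which is dominant, is the dominant representative of the Weyl orbit of a root. In type $A_n$ the roots form a single Weyl orbit whose dominant element is the highest root $\theta$, so $\omega_1 = \theta$ and $\rho$ is the adjoint representation.

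To finish I would compare cardinalities: the adjoint representation of a type $A_n$ simple Lie algebra has weight set $\Phi \sqcup \{0\}$ with $\mathcal{O} = \W\theta = \Phi$, so $2n = |\mathcal{O}| = |\Phi| = n(n+1)$, which forces $n = 1$; then $\rho$ is $\mathsf{S}^2\mathbb{C}^2$, the adjoint representation of $A_1$. I do not anticipate a real obstacle: beyond Corollary~\ref{rmk:polytope} the only ingredients are textbook facts (the chain-of-weights property of irreducible modules, and that in $A_n$ the roots form one Weyl orbit with dominant element the highest root), and the concluding count is immediate.
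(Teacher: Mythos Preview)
Your proof is correct and follows the same overall strategy as the paper: show that the highest weight lies in the $\W$-orbit of a root, use the transitivity of $\W$ on $\Phi$ in type $A_n$ to identify $\rho$ with the adjoint representation, and then compare $|\Phi|=n(n+1)$ with $|\mathcal{O}|=2n$ to force $n=1$. The only difference is in how the first step is justified. The paper observes that $0$ being a weight forces $\omega_1\in\mathbb{N}\Delta^+$, and that $0$ and $\omega_1$ are the only weights in the closed fundamental chamber, from which it concludes $\omega_1\in\Delta^+$; you instead take a chain of weights $\omega_1=\mu_0,\mu_1,\dots,\mu_r=0$ obtained by subtracting simple roots and note that the penultimate term $\mu_{r-1}$ is a simple root lying in $\mathcal{O}$. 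Both routes are short; yours is arguably more self-contained and makes the appearance of a root among the weights completely explicit, whereas the paper's argument leans on the structure of dominant weights below $\omega_1$.
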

\begin{proof}
By definition,  $S \coloneqq \{\lambda \in \Lambda: \mathbb{V}_{\lambda} \ne 0 \} = \mathcal{O} \bigsqcup \{0\}$ where $\mathcal{O} \coloneqq \W \omega_1 = \{\pm \omega_1,\dots,  \pm \omega_n\}$ has rank $n$ and $\omega_1$ is the highest weight of $\mathbb{V}$.  Since $S = (\omega_1 + \mathbb{Z} \Phi) \cap \Conv(\mathcal{O})$,  we must have $-\omega_1 \in \mathbb{Z} \Phi$ and hence $\omega_1 \in \mathbb{N} \Delta^+$.  We notice that $\mathcal{O}$ is the $\W$-orbit of the highest weight $\omega_1$.  Thus,  elements in $\mathcal{O}$ belong to different Weyl chambers.  This implies that $0$ and $\omega_1$ are the only weights of $\mathbb{V} $ contained in the fundamental Weyl chamber $C$.  Therefore,  we may conclude that $\omega_1 \in \Delta^+$.  According to Table~\ref{tab:simple},  $\W$ acts transitively on $\Phi$,  from which we obtain $\mathcal{O} = \Phi$.  Since both $\rho$ and the adjoint representation are irreducible,  we conclude that they are isomorphic.  By the definition of the adjoint representation,  we have $(n+1)^2 - (n+1) = \dim (\mathfrak{g}) - \dim (\mathfrak{h}) = |\Phi| = |\mathcal{O}| = 2n$ which implies $n = 1$.
\end{proof}

\begin{lemma}
Let $G$ be a complex simple Lie group of type $A_1$ and let $\rho: G \to \GL (\mathbb{V})$ be a representation with Cayley configuration.  Then $\rho$ is either the standard representation or the adjoint representation.
\end{lemma}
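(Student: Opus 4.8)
The plan is to read off the weight set of $\rho$ from the hypothesis and then invoke the representation theory of $\mathfrak{sl}_2(\mathbb{C})$. Write $\mathfrak{g}$ for the Lie algebra of $G$, so that $\mathfrak{g}\simeq\mathfrak{sl}_2(\CC)$ and $n\coloneqq\dim_\CC(\mathfrak{h})=1$. Let $\omega_1$ be the highest weight of $\mathbb{V}$ and put $S\coloneqq\{\lambda\in\Lambda:\mathbb{V}_\lambda\neq 0\}$. By the definition of a representation with Cayley configuration together with Corollary~\ref{rmk:polytope} (equivalently Theorem~\ref{thm:main}), $S$ is either $\mathcal{O}_{\omega_1}=\W\omega_1$ or $\mathcal{O}_{\omega_1}\sqcup\{0\}$, where $\mathcal{O}_{\omega_1}=\{\omega_1,-\omega_1\}$ consists of $2n=2$ distinct nonzero weights, each occurring with multiplicity one. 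In particular $\dim_\CC\mathbb{V}\in\{2,3\}$.

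I would then split into the two cases. If $\mathbb{V}_0\neq 0$, then $S=\mathcal{O}_{\omega_1}\sqcup\{0\}$, and Lemma~\ref{prop:adjoint} (which in type $A_1$ applies directly, $n=1$ being already satisfied) yields at once that $\rho$ is the adjoint representation. If instead $\mathbb{V}_0=0$, then $S=\{\omega_1,-\omega_1\}$ with $\dim\mathbb{V}_{\omega_1}=\dim\mathbb{V}_{-\omega_1}=1$, so $\dim_\CC\mathbb{V}=2$; since the irreducible representations of a simple group of type $A_1$ are the symmetric powers $S^m(\CC^2)$, $m\ge 0$, of dimension $m+1$, the only two‑dimensional one is the standard representation $\CC^2$, and hence $\rho$ is the standard representation. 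This exhausts both cases and proves the claim.

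There is no real obstacle here: the substance is entirely contained in the previously established Theorem~\ref{thm:main}/Corollary~\ref{rmk:polytope}, which pin $\dim_\CC\mathbb{V}$ down to at most $3$, and in Lemma~\ref{prop:adjoint}, which identifies the three‑dimensional possibility with the adjoint representation. The only point that needs a moment of care is the bookkeeping of weight‑lattice conventions; using Table~\ref{tab:simple} (so $\Lambda=\ZZ L_1$, $\omega_1=mL_1$, and $S^m(\CC^2)$ has weight set $\{mL_1,(m-2)L_1,\dots,-mL_1\}$ of size $m+1$, all multiplicities one) one may alternatively bypass Lemma~\ref{prop:adjoint} entirely: the constraint $|S|=m+1\in\{2,3\}$ forces $m\in\{1,2\}$, giving respectively the standard representation $\CC^2$ and the adjoint representation $S^2(\CC^2)\simeq\mathfrak{sl}_2(\CC)$.
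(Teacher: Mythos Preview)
Your proposal is correct. The alternative you sketch at the end—listing the weight set of $\mathsf{S}^m(\CC^2)$ and observing that $|S|=m+1\in\{2,3\}$ forces $m\in\{1,2\}$—is exactly the paper's proof; your main argument via Lemma~\ref{prop:adjoint} is also valid but slightly more roundabout, since that lemma's general $A_n$ reasoning is unnecessary once you already have the explicit $\mathfrak{sl}_2$ classification in hand.
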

\begin{proof}
Let $\mathbb{C}^2$ be the standard representation of $G \simeq \SL_2(\mathbb{C})$.  Then any irreducible representation of $G$ is isomorphic to $\mathsf{S}^d \mathbb{C}^2$ for some positive integer $d$. Since the set of weights of $\mathsf{S}^d \mathbb{C}^2$ is $\{d - 2k\}_{k=0}^d$,  it is straightforward to verify that $\mathsf{S}^d \mathbb{C}^2$ is a representation with Cayley configuration if and only if $d = 1$ (the standard representation) or $d = 2$ (the adjoint representation). 
\end{proof}
    
\begin{lemma}\label{lem:A_2}
If $G$ is a complex simple Lie group of type $A_2$,  then $G$ has no representation with Cayley configuration.
\end{lemma}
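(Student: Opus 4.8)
The plan is to obtain a contradiction purely from the order of the Weyl group of type $A_2$ together with the shape of the weight diagram dictated by Corollary~\ref{rmk:polytope}. Suppose, for contradiction, that $\rho\colon G\to\GL(\mathbb{V})$ is a representation with Cayley configuration and highest weight $\omega_1$, and write $\mathcal{O}\coloneqq\mathcal{W}\omega_1$ for the Weyl orbit of $\omega_1$, where $\mathcal{W}$ is the Weyl group of $G$. For type $A_2$ we have $n\coloneqq\dim_{\mathbb{C}}\mathfrak{h}=2$.

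By the definition of a representation with Cayley configuration (Corollary~\ref{rmk:polytope}), or equivalently by Theorem~\ref{thm:main}~\eqref{thm:main:item1} and \eqref{thm:main:item3}, the set of nonzero weights of $\mathbb{V}$ is exactly the single orbit $\mathcal{O}=\{\pm\omega_1,\pm\omega_2\}$ for some $\omega_2\in\Lambda$ with $\omega_1,\omega_2$ linearly independent; in particular $|\mathcal{O}|=2n=4$. On the other hand, Table~\ref{tab:simple} gives $\mathcal{W}\simeq\mathfrak{S}_3$, so $|\mathcal{W}|=6$, and the orbit--stabilizer theorem forces $|\mathcal{O}|$ to divide $6$. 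Since $4\nmid 6$, this is a contradiction, and no such $\rho$ can exist.

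I do not expect a genuine obstacle here; the only point requiring care is reading off from Corollary~\ref{rmk:polytope} that the nonzero weights of a Cayley-configuration representation of a rank-$n$ group form a $\mathcal{W}$-orbit of cardinality \emph{exactly} $2n$ (not merely at most $2n$), which is ensured by the linear independence of $\omega_1,\dots,\omega_n$ in Theorem~\ref{thm:main}~\eqref{thm:main:item3}. As a cross-check one can argue concretely with Table~\ref{tab:simple} instead: under $\Lambda=\mathbb{Z}\{L_1,L_2,L_3\}/(L_1+L_2+L_3=0)$, every nonzero dominant weight of $A_2$ has $\mathfrak{S}_3$-orbit equal to $\{L_1,L_2,L_3\}$, or to $\{L_1+L_2,L_1+L_3,L_2+L_3\}$, or to a regular orbit of size $6$; none has four elements, and neither orbit of size $3$ is symmetric about the origin since $-L_j=L_k+L_\ell$ lies in the other one.
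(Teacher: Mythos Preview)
Your proof is correct and follows essentially the same approach as the paper: both argue that a Cayley configuration would require a Weyl orbit of size $2n=4$, while $|\mathcal{W}|=|\mathfrak{S}_3|=6$ forces every orbit size to divide $6$. Your additional remarks on why the orbit has cardinality exactly $2n$ and the explicit cross-check with the three-element orbits are helpful but not needed for the argument.
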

\begin{proof}
By Table~\ref{tab:simple},  we have $\dim (\mathfrak{h})= 2$ and $\W = \mathfrak{S}_3$. Since the cardinality of any $\W$-orbit divides $|\W| = 6$,  there does not exist an orbit of cardinality $4 = 2 \dim (\mathfrak{h})$.  This implies the non-existence of a representation with Cayley configuration.  
\end{proof}

\begin{lemma}\label{lem:An}
If $G$ is a complex simple Lie group of type $A_n$ and $\rho: G \to \GL (\mathbb{V})$ is a representation with Cayley configuration,  then one of the followings holds: 
\begin{enumerate}[(a)]
\item $n = 1$ and $\mathbb{V}$ is the standard representation $\mathbb{C}^2$. 
\item $n = 1$ and $\mathbb{V}$ is the adjoint representation $\mathfrak{sl}_2(\mathbb{C})\simeq \mathsf{S}^2 \mathbb{C}^2$.  
\item $n =3$ and $\mathbb{V} = \mathsf{\Lambda}^2 \mathbb{C}^4$.
\end{enumerate}
\end{lemma}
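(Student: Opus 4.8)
The plan is to combine the three lemmas already proved for type $A_n$ with a short orbit-counting argument in the weight lattice. For $n=1$ the assertion is literally the previous lemma (standard or adjoint), giving (a) or (b); for $n=2$ there is nothing to prove, since by Lemma~\ref{lem:A_2} type $A_2$ admits no representation with Cayley configuration. So the real work is to show that for $n\ge 3$ the only possibility is $n=3$ with $\mathbb{V}=\mathsf{\Lambda}^2\mathbb{C}^4$, i.e.\ case (c).

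Assume $n\ge 3$ and that $\rho$ has Cayley configuration, with highest weight $\omega_1$. By Corollary~\ref{rmk:polytope}, $S\coloneqq\{\lambda\in\Lambda:\mathbb{V}_\lambda\ne 0\}$ is either $\mathcal{O}\coloneqq\W\omega_1$ or $\mathcal{O}\sqcup\{0\}$, with $|\mathcal{O}|=2n$ and $\rank(\mathcal{O})=n$. If the origin were a weight, Lemma~\ref{prop:adjoint} would force $n=1$; hence $S=\mathcal{O}$ and $|\W\omega_1|=2n$. Now I would use that $\W=\mathfrak{S}_{n+1}$ permutes $L_1,\dots,L_{n+1}$, so the stabilizer of $\omega_1$ is the Young subgroup $\mathfrak{S}_{m_1}\times\cdots\times\mathfrak{S}_{m_r}$, where $m_1,\dots,m_r$ are the multiplicities of the distinct coordinates of $\omega_1$; thus $2n=\binom{n+1}{m_1,\dots,m_r}$. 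If $r\ge 3$ this multinomial coefficient is at least $n(n+1)>2n$, so $r\le 2$; and if $r=2$ the equation reads $\binom{n+1}{k}=2n$ for some $1\le k\le n$, whose only solutions (after reducing to $k\le\tfrac{n+1}{2}$ and inspecting $k=1$, $k=2$, and $k\ge 3$ separately, where the binomial grows too fast) are $n=1$ and $n=3$. Since $n\ge 3$ we conclude $n=3$ and $\omega_1$ has two distinct coordinate values each of multiplicity $2$; working modulo $\mathbb{Z}(1,1,1,1)$ this says $\omega_1=m\omega_2$ for some integer $m\ge 1$, where $\omega_2=L_1+L_2$ is the fundamental weight dual to the simple root $\alpha_2=L_2-L_3$.

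It remains to eliminate $m\ge 2$. For such $m$, the point $\omega_1-\alpha_2=(m,m-1,1,0)$ lies on the segment joining $\omega_1$ to its Weyl reflection $\omega_1-m\alpha_2\in\mathcal{O}$, hence $\omega_1-\alpha_2\in\Conv(\mathcal{O})$, and it obviously lies in $\omega_1+\mathbb{Z}\Phi$; so Theorem~\ref{thm: FTRTSLA}(b) makes it a weight of $\mathbb{V}$. But its coordinate multiset, reduced modulo $(1,1,1,1)$, is not a permutation of $(m,m,0,0)$ once $m\ge 2$ (two of its entries differ by $1$, whereas in $(m,m,0,0)$ any two entries differ by $0$ or $m$), so $\omega_1-\alpha_2\notin\mathcal{O}$, and it is nonzero; this contradicts $S=\mathcal{O}$. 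Hence $m=1$, so $\omega_1=\omega_2=L_1+L_2$ and $\mathbb{V}$ is the irreducible representation with that highest weight, i.e.\ $\mathsf{\Lambda}^2\mathbb{C}^4$, which is case (c).

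The main obstacle I anticipate is the orbit-counting step: verifying cleanly that $\binom{n+1}{k}=2n$ has no solution beyond $n\in\{1,3\}$ and that no Young subgroup of $\mathfrak{S}_{n+1}$ with $r\ge 3$ blocks can have index $2n$ — both elementary but requiring care uniformly in $n$ — together with the bookkeeping needed to identify the surviving weight with a fundamental weight in $\Lambda\simeq\mathbb{Z}^{n+1}/\mathbb{Z}(1,\dots,1)$ and to exhibit the explicit extra weight $\omega_1-\alpha_2$ ruling out $m\ge 2$.
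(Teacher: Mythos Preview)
Your proof is correct and takes a route genuinely different from the paper's. The paper first applies the root-string argument (via the reflection swapping $L_1$ and $L_{n+1}$) to force the largest coefficient $a_1=1$, so that the highest weight is already a fundamental weight $\omega_1=\sum_{j=1}^k L_j$; only then does it invoke the orbit count $\binom{n+1}{k}=2n$ to pin down $(n,k)=(3,2)$. You reverse the order: you first analyze the stabilizer of $\omega_1$ as a Young subgroup, bound the multinomial to get $r\le 2$, solve $\binom{n+1}{k}=2n$, and only afterwards use a root-string step (exhibiting the weight $\omega_1-\alpha_2$) to rule out $m\ge 2$ at $n=3$. The paper's ordering is a bit more economical—once $a_1=1$ is known, the highest weight is fundamental and no second elimination step or $r\ge 3$ multinomial bound is needed—while your approach makes the combinatorics of the orbit size more explicit. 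Both are valid; one minor omission is the case $r=1$, which you should dismiss explicitly (it forces $\omega_1=0$, hence the trivial representation).
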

\begin{proof}
By Lemmas~\ref{prop:adjoint}--\ref{lem:A_2},  we may assume that $0$ is not a weight of $\mathbb{V}$ and $n \ge 3$.  Let $\omega_1$ be the highest weight of $\mathbb{V}$.  We may write $\omega_1 = \sum_{j=1}^n a_j L_j$ for some integers $a_1 \ge \cdots \ge a_n \ge 0$.  Suppose $\sigma \in \W$ is the reflection across the hyperplane perpendicular to $L_1 - L_{n+1}$.  Then $\sigma$ interchanges $L1$ and $L_{n+1}$ and fixes $L_2,\dots, L_n$.  Hence we have 
\[
\sigma (\omega_1) = \sum_{j=2}^{n} a_j L_j + a_1 L_{n+1},\quad \sigma(\omega_1) - \omega_1 = a_1 (L_{n+1} - L_1).
\] 
Since $L_{n+1} - L_1$ is a root and both $\omega_1$ and $\sigma(\omega_1)$ are weights of $\mathbb{V}$,  $\omega_1 + k (L_{n+1} - L_1)$ is also a weight of $\mathbb{V}$ for $0 \le k \le a_1$.  By the definition of a representation with Cayley configuration,  we must have $a_1 = 1$.  Hence there exists some positive integer $k \le n$ such that $\omega_1 = \sum_{j=1}^k L_j$.  This implies that 
\[
\W \omega_1 = \left\lbrace \sum_{j=1}^k \omega_{s_j}: 1 \le s_j \le n+1,\;s_j \ne s_{j'},\;1 \le j \ne j' \le k\right\rbrace.
\]
Therefore,  we have $\binom{n+1}{k} = |\W \omega_1| = 2 \dim(\mathfrak{h}) = 2n$,  which forces $(n,k) = (3,2)$ as $n \ge 3$.  Now that $G \simeq \SL_4(\mathbb{C})$ and $\omega_1 = L_1 + L_2$,  we have $\mathbb{V} = \mathsf{\Lambda}^2 \mathbb{C}^4$.
\end{proof}
\subsubsection{Type $B_n$ ($n \ge 2$)}
\begin{lemma}\label{lem:adjointBn}
Let $G$ be a complex simple Lie group of type $B_n$ ($n\ge 2$) and let $\rho: G \to \GL(\mathbb{V})$ be a representation with Cayley configuration such that $\mathbb{V}_0 \ne 0$.  Then $\rho$ is the standard representation.
\end{lemma}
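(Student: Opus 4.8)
The plan is to pin down the highest weight $\omega_1$ of $\mathbb{V}$ using the combinatorics of the $B_n$ weight lattice. Since $\rho$ has Cayley configuration and $\mathbb{V}_0\ne 0$, Corollary~\ref{rmk:polytope} says that the weights of $\mathbb{V}$ are exactly $\mathcal{O}\sqcup\{0\}$, where $\mathcal{O}$ is the $\mathcal{W}$-orbit of $\omega_1$, $|\mathcal{O}|=2n$, and $\dim\mathbb{V}_\lambda=1$ for every $\lambda\in\mathcal{O}$. By Theorem~\ref{thm: FTRTSLA}(b) we also have $\mathcal{O}\sqcup\{0\}=\Conv(\mathcal{O})\cap(\omega_1+\mathbb{Z}\Phi)$; in particular $0\in\omega_1+\mathbb{Z}\Phi$, so $\omega_1$ lies in the root lattice. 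Reading off Table~\ref{tab:simple}, the root lattice of $B_n$ equals $\bigoplus_{j=1}^n\mathbb{Z}L_j$, so writing the dominant weight as $\omega_1=\sum_{j=1}^n a_jL_j$ with $a_1\ge\cdots\ge a_n\ge 0$ forces $a_j\in\mathbb{Z}$ for all $j$.

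First I would show that $a_1\le 1$. Suppose instead $a_1\ge 2$. Since flipping the sign of the first coordinate is an element of $\mathcal{W}$, the point $-a_1L_1+\sum_{j\ge 2}a_jL_j$ lies in $\mathcal{O}$, and convexity of $\Conv(\mathcal{O})$ then gives $\{\omega_1-tL_1:0\le t\le 2a_1\}\subseteq\Conv(\mathcal{O})$; taking $t=1$ shows $\omega_1-L_1\in\Conv(\mathcal{O})$. As $L_1$ is a root, $\omega_1-L_1$ also lies in $\omega_1+\mathbb{Z}\Phi$, so $\omega_1-L_1$ is a weight of $\mathbb{V}$. But $\omega_1-L_1=(a_1-1)L_1+\sum_{j\ge 2}a_jL_j$ has non-negative, not-all-zero coordinates, hence is neither $0$ nor a signed permutation of $(a_1,\dots,a_n)$ — the multisets $\{a_1-1,a_2,\dots,a_n\}$ and $\{a_1,a_2,\dots,a_n\}$ differ by multiset cancellation — contradicting that the weights of $\mathbb{V}$ are exactly $\mathcal{O}\sqcup\{0\}$. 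Hence $a_1\le 1$, and since $\omega_1\ne 0$ (otherwise $|\mathcal{O}|=1$) dominance forces $\omega_1=L_1+\cdots+L_k$ for some $1\le k\le n$.

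Then $\mathcal{O}$ consists of all $\pm L_{i_1}\pm\cdots\pm L_{i_k}$ over $k$-element index sets, so $|\mathcal{O}|=2^k\binom{n}{k}$, and the constraint $|\mathcal{O}|=2n$ becomes $2^{k-1}\binom{n}{k}=n$. For $2\le k\le n-1$ this is impossible, because $\binom{n}{k}\ge n$ gives $2^{k-1}\binom{n}{k}\ge 2n$; the only remaining non-trivial possibility is $k=n$, where the equation reads $2^{n-1}=n$ and hence $n=2$. If $k=1$ then $\omega_1=L_1$ and $\rho$ is the standard representation $\mathbb{C}^{2n+1}$, as desired. If $n=2$ and $k=2$, then $\omega_1=L_1+L_2$ and $\mathcal{O}=\{\pm L_1\pm L_2\}$; since $\mathbb{Z}\Phi=\mathbb{Z}L_1\oplus\mathbb{Z}L_2$ for $B_2$, one computes $\Conv(\mathcal{O})\cap(\omega_1+\mathbb{Z}\Phi)=\{aL_1+bL_2:a,b\in\{-1,0,1\}\}$, which has $9>2n+1$ elements and so cannot be a Cayley configuration (this is the weight diagram of the adjoint representation of $\mathfrak{so}_5$). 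Therefore $\omega_1=L_1$ in every case. The step I expect to be the main obstacle is excluding the spurious solution $(n,k)=(2,2)$ of $2^{k-1}\binom{n}{k}=n$: counting orbit sizes is not enough, and one must use the full weight polytope $\Conv(\mathcal{O})\cap(\omega_1+\mathbb{Z}\Phi)$ — the same input that drives the reduction $a_1\le 1$.
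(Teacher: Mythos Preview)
Your argument is correct. The two proofs reach the same conclusion by different routes. The paper first invokes the argument of Lemma~\ref{prop:adjoint}: from $\mathbb{V}_0\ne 0$ one gets $\omega_1\in\mathbb{N}\Delta^+$, and since the only dominant weights of $\mathbb{V}$ are $0$ and $\omega_1$, the paper concludes that $\omega_1$ is a root; then, because $\Phi=\W L_n\sqcup\W(L_1-L_2)$ for $B_n$, the representation is forced to be standard or adjoint, and the adjoint is ruled out by $|\Phi|=2n^2>2n$. You instead bound the coefficients of $\omega_1$ directly---using the sign-flip $L_1\mapsto -L_1$ together with Theorem~\ref{thm: FTRTSLA}(b) to produce the extra weight $\omega_1-L_1$ whenever $a_1\ge 2$---and then finish by an orbit-size count $2^k\binom{n}{k}=2n$ plus an explicit elimination of $(n,k)=(2,2)$ via the nine-point weight polytope. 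Your route avoids the ``$\omega_1$ is a root'' step entirely and is in fact the same technique the paper employs in the companion Lemma~\ref{lem:Bn} (for $\mathbb{V}_0=0$); so you have effectively unified the two cases under one method. The trade-off is that the paper's reduction to the adjoint representation disposes of all $n\ge 2$ in a single dimension inequality, whereas your approach must separately handle the boundary case $(n,k)=(2,2)$ that survives the orbit count.
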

\begin{proof}
By definition,  $S \coloneqq \{\lambda \in \Lambda: \mathbb{V}_{\lambda} \ne 0 \} = \mathcal{O} \bigsqcup \{0\}$ where $\mathcal{O} \coloneqq \W \omega_1 = \{\pm \omega_1,\dots,  \pm \omega_n\}$ has rank $n$ and $\omega_1$ is the highest weight of $\mathbb{V}$.  By the same argument as in the proof of Lemma~\ref{prop:adjoint},  we obtain that $\omega_1 \in \Delta^+$.  According to Table~\ref{tab:simple},  we have \[
\Phi = \W L_n \bigsqcup \W ( L_1 - L_2),
\]
from which we may conclude that either $\omega_1 =  L_n$ or $\omega_1 =  L_1 - L_2$.  By the irreducibility of $\mathbb{V}$,  $\rho$ is either the standard representation or the adjoint representation.  To exclude the latter,  we notice that $n \ge 2$ implies
\[ 
\dim (\mathfrak{g}) - \dim (\mathfrak{h}) = n(2n+1) - n  > 2n =  2\dim (\mathfrak{h}).  \qedhere
\]
\end{proof}
\begin{lemma}\label{lem:Bn}
Let $G$ be a complex simple Lie group of type $B_n$ ($n\ge 2$) and let $\rho: G \to \GL(\mathbb{V})$ be a representation with Cayley configuration.  Then $\rho$ is either the standard representation or $n = 2$ and $\rho$ is the spin representation.
\end{lemma}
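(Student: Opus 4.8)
The plan is to reduce at once to the case where $0$ is not a weight of $\mathbb{V}$. If $\mathbb{V}_0\neq 0$, then Lemma~\ref{lem:adjointBn} applies verbatim and forces $\rho$ to be the standard representation, so nothing further is needed. Assume henceforth $\mathbb{V}_0=0$. By the definition of a representation with Cayley configuration, the weight set of $\mathbb{V}$ is then exactly $\mathcal{O}\coloneqq \W\omega_1$, where $\omega_1$ is the highest weight, $|\mathcal{O}|=2n$ and $\rank(\mathcal{O})=n$.

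First I would determine the class of $\omega_1$ modulo the root lattice. Since $\mathcal{O}$ is a $\W$-orbit and $-\Id\in\W$ for type $B_n$, the set $\mathcal{O}$ is symmetric about the origin, so $0\in\Conv(\mathcal{O})$; by Theorem~\ref{thm: FTRTSLA} the fact that $0$ is not a weight forces $\omega_1\notin\ZZ\Phi$. As $\Lambda/\ZZ\Phi\simeq\ZZ/2\ZZ$ is generated by the spin weight $\delta=\tfrac12\sum_{j=1}^n L_j$, writing $\omega_1=\sum_{j=1}^n a_j L_j$ in the notation of Table~\ref{tab:simple} forces every $a_j$ to be a strictly positive odd multiple of $\tfrac12$. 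In particular no $a_j$ vanishes, so the sign-change subgroup $(\ZZ/2\ZZ)^n$ of $\W\simeq(\ZZ/2\ZZ)^n\rtimes\mathfrak{S}_n$ acts freely on $\omega_1$, whence $|\mathcal{O}|\ge 2^n$. Combined with $|\mathcal{O}|=2n$ this gives $2^n\le 2n$, so $n=2$; and then $|\mathcal{O}|=4$ rules out $a_1\neq a_2$ (which would give $|\mathcal{O}|=8$), so $\omega_1=a(L_1+L_2)$ with $a\in\tfrac12+\ZZ_{\ge 0}$.

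It then remains to exclude $a\ge\tfrac32$. For such $a$ the point $\tfrac12(L_1-L_2)$ lies in $\Conv(\mathcal{O})=[-a,a]^2$ and differs from $\omega_1$ by $(\tfrac12-a)L_1-(\tfrac12+a)L_2\in\ZZ\Phi=\ZZ\{L_1,L_2\}$, hence is a weight of $\mathbb{V}$ by Theorem~\ref{thm: FTRTSLA}; but $\tfrac12(L_1-L_2)$ is not one of the four points $(\pm a,\pm a)$ of $\mathcal{O}$, contradicting that the weight set equals $\mathcal{O}$. Therefore $a=\tfrac12$, and $\omega_1=\tfrac12(L_1+L_2)$ is the highest weight of the spin representation of $B_2$, which completes the proof. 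The only mildly delicate points are the bookkeeping with the weight and root lattices of $B_n$ (the isomorphism $\Lambda/\ZZ\Phi\simeq\ZZ/2\ZZ$ and the resulting half-integrality of the $a_j$) and the final lattice-point exclusion; both are routine given the root-system data in Table~\ref{tab:simple}, so I do not anticipate a genuine obstacle.
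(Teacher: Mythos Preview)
Your argument is correct, and the overall architecture differs from the paper's. The paper first applies a root-string argument: writing $\omega_1=\tfrac12\sum_j a_jL_j$ with the $a_j$ nonnegative integers of the same parity, it uses the reflection $\sigma$ flipping the sign of $L_1$ to see that the $L_1$-string from $\omega_1$ to $\sigma(\omega_1)$ has length $a_1$, and since every nonzero weight must be extremal this forces $a_1\le 1$, hence $a_1=\cdots=a_n=1$. Only then does it compute $|\W\omega_1|=2^n$ and compare with $2n$ to obtain $n=2$ (and $\omega_1=\delta$ is already determined).

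You invert this order: you first use the coset structure $\Lambda/\ZZ\Phi\simeq\ZZ/2\ZZ$ together with $\mathbb{V}_0=0$ to put $\omega_1$ in the spin coset, which guarantees every coordinate is a positive half-odd integer; then the free action of the sign-change subgroup gives $|\mathcal{O}|\ge 2^n$, forcing $n=2$; and only afterwards do you pin down $\omega_1$ exactly, ruling out $a\ge\tfrac32$ by exhibiting the explicit lattice point $\tfrac12(L_1-L_2)\in\Conv(\mathcal{O})\cap(\omega_1+\ZZ\Phi)\setminus\mathcal{O}$. Both routes are clean; the paper's buys you the exact highest weight before counting and reuses the same string device already set up in Lemma~\ref{lem:An}, while yours trades the string argument for a purely lattice-theoretic one and postpones the fine analysis to the single case $n=2$.
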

\begin{proof}
By Lemma~\ref{lem:adjointBn},  it is sufficient to consider the case $\mathbb{V}_0 = 0$.  Let $\omega_1$ be the highest weight of $\mathbb{V}$.  We may write $\omega_1 = \frac{1}{2} \sum_{j=1}^n a_j L_j $ from some integers $a_1 \ge \cdots a_n \ge 0$ of the same parity.  Let $\sigma \in \W$ be the reflection such that $\sigma(L_1) = -L_1$ and $\sigma(L_j) = L_j$ if $2 \le j \le n$.  We have $\sigma (\omega_1) - \omega_1 = -a_1 L_1$.  The same argument as in the proof of Lemma~\ref{lem:An} implies $a_1 = \cdots = a_n = 1$ and $|\W \omega_1| = 2^n >  2n = 2 \dim (\mathfrak{h})$,  which contradicts to the assumption that $\mathbb{V}$ is a representation with Cayley configuration when $n \ge 3$.  If $n = 2$,  then $\omega_1 = \frac{1}{2}( L_1 + L_2 )$ and $\mathbb{V}$ is the spin representation by definition.
\end{proof}    
\subsubsection{Type $C_n$ ($n \ge 3$)}    
By the argument in the proof of Lemma~\ref{lem:adjointBn},  we may derive that $0$ is not a weight of a representation with Cayley configuration for simple Lie group of type $C_n$.
\begin{lemma}\label{lem:adjointCn}
Let $G$ be a complex simple Lie group of type $C_n$ ($n\ge 3$) and let $\rho: G \to \GL(\mathbb{V})$ be a representation with Cayley configuration.  Then we have $\mathbb{V}_0 = 0$.   
\end{lemma}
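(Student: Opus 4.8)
The plan is to argue by contradiction and to run, in the type-$C_n$ setting, exactly the argument used in the proof of Lemma~\ref{lem:adjointBn}. Assume $\mathbb{V}_0\ne 0$. Since $\rho$ has Cayley configuration, the set of weights $S\coloneqq\{\lambda\in\Lambda:\mathbb{V}_\lambda\ne 0\}$ must be $\mathcal{O}\sqcup\{0\}$, where $\mathcal{O}\coloneqq\W\omega_1=\{\pm\omega_1,\dots,\pm\omega_n\}$ has rank $n$, $\omega_1$ is the highest weight, and in particular $|\mathcal{O}|=2n=2\dim\mathfrak{h}$. As in the proof of Lemma~\ref{prop:adjoint} (the step reused in Lemma~\ref{lem:adjointBn}), the presence of $0$ in $S$ forces $\omega_1\in\mathbb{Z}\Phi$, and then the fact that $0$ and $\omega_1$ are the only weights of $\mathbb{V}$ lying in the fundamental Weyl chamber forces $\omega_1$ to be a root; I would cite that argument verbatim rather than reproduce it.

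Next I would read off the root data of $C_n$ from Table~\ref{tab:simple}: the root system splits into two $\W$-orbits, the long roots $\W(2L_n)=\{\pm 2L_j:1\le j\le n\}$ and the short roots $\W(L_1-L_2)=\{\pm L_i\pm L_j:1\le i<j\le n\}$. Since $\omega_1$ is a root, $\mathcal{O}=\W\omega_1$ is one of these, and I would split into two cases. If $\mathcal{O}=\W(L_1-L_2)$, then $|\mathcal{O}|=2n(n-1)$, which exceeds $2n$ for $n\ge 3$, contradicting $|\mathcal{O}|=2n$. If $\mathcal{O}=\W(2L_n)=\{\pm 2L_j\}$, then the unique dominant element of $\mathcal{O}$ is the highest root $2L_1$, so $\omega_1=2L_1$ and $\mathbb{V}$ is the adjoint representation $\mathfrak{sp}_{2n}$; but then $S\setminus\{0\}=\Phi$, so $|\mathcal{O}|=|\Phi|=\dim\mathfrak{g}-\dim\mathfrak{h}=n(2n+1)-n=2n^2>2n$ for $n\ge 2$, again a contradiction (equivalently, the short root $L_1-L_2$ is a nonzero weight of the adjoint representation outside $\mathcal{O}$). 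In either case the assumption $\mathbb{V}_0\ne 0$ is untenable, so $\mathbb{V}_0=0$.

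The only step requiring any care is the reduction ``$\omega_1$ is a root'', which is imported wholesale from Lemma~\ref{prop:adjoint}; everything else is routine bookkeeping with the root system of $C_n$. The substantive point --- and the reason the conclusion here is stronger than for type $B_n$, where the standard representation does have $\mathbb{V}_0\ne 0$ --- is that for $\mathfrak{sp}_{2n}$ the only root orbit of cardinality $2\dim\mathfrak{h}$ consists of the long roots $\{\pm 2L_j\}$, whose dominant member is the highest root, so the associated irreducible representation is the adjoint one and necessarily carries $2n^2$ nonzero weights, far more than a Cayley configuration permits.
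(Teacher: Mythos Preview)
Your proposal is correct and takes essentially the same approach as the paper: the paper's proof is the single remark that the argument of Lemma~\ref{lem:adjointBn} (which imports from Lemma~\ref{prop:adjoint} the reduction ``$\omega_1$ is a root'') carries over verbatim to type $C_n$, and you have written out precisely that adaptation, ruling out the short-root orbit by cardinality and the long-root orbit via the adjoint representation exactly as the $B_n$ argument does.
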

Let $\mathbb{V}$ be a representation of $G$ with Cayley configuration and let $\omega_1$ be its highest weight.  A similar calculation as in the proof of Lemma~\ref{lem:Bn} leads to $\W \omega = \{ \sum_{j=1}^k \pm L_{s_j}: 1 \le s_1 < \cdots < s_k  \le n\}$ for some $1 \le k \le n$.  Since $0$ is not a weight of $\mathbb{V}$,  we have 
\[
2^k \binom{n}{k} = |\W \omega| = 2\dim (\mathfrak{h}) = 2n.
\]
Since $n \ge 3$,  we obtain that $k = 1$ which indicates the following.
\begin{lemma}\label{lem:C_n}
Let $G$ be a complex simple Lie group of type $C_n$ ($n\ge 3$) and let $\rho: G \to \GL(\mathbb{V})$ be a representation with Cayley configuration.  Then $\rho$ is the standard representation. 
\end{lemma}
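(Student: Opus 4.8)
The plan is to push the highest-weight analysis of a representation with Cayley configuration one step past the discussion preceding the statement and then match the resulting highest weight against the standard representation. The structural ingredients are already in hand: Lemma~\ref{lem:adjointCn} gives $\mathbb{V}_0 = 0$, and the reflection computation (carried out exactly as in the proof of Lemma~\ref{lem:Bn}) shows that the highest weight of $\mathbb{V}$ has the form $\omega_1 = \sum_{j=1}^{k} L_j$ with $1 \le k \le n$, so that $\W\omega_1 = \{\sum_{j=1}^{k} \pm L_{s_j} : 1 \le s_1 < \cdots < s_k \le n\}$ has cardinality $2^{k}\binom{n}{k}$.

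In more detail, I would first write $\omega_1 = \sum_{j=1}^{n} a_j L_j$ with integers $a_1 \ge a_2 \ge \cdots \ge a_n \ge 0$, as forced by the Weyl chamber description in Table~\ref{tab:simple}. Applying to $\omega_1$ the reflection in $\W$ sending $L_1 \mapsto -L_2$, $L_2 \mapsto -L_1$ and fixing the other $L_j$, the difference $\omega_1 - s(\omega_1) = (a_1+a_2)(L_1+L_2)$ is an integer multiple of the root $L_1+L_2$, so the unbroken root-string property forces $\omega_1 - t(L_1+L_2)$ to be a weight of $\mathbb{V}$ for each $0 \le t \le a_1+a_2$. The interior weights of this string ($0 < t < a_1+a_2$) are strict convex combinations of the extreme weights $\omega_1$ and $s(\omega_1)$, hence not extreme, and at most one weight of $\mathbb{V}$ can be $0$; moreover the single borderline case $a_1+a_2 = 2$ with $a_2 = 0$ (that is, $\omega_1 = 2L_1$) is impossible because the resulting interior weight $L_1 - L_2$ is nonzero. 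Hence $a_1 \le 1$, so every $a_j \in \{0,1\}$ and $\omega_1 = \sum_{j=1}^{k} L_j$ with $1 \le k \le n$ (non-triviality of $\rho$ giving $k \ge 1$), and $\W\omega_1$ is as stated above.

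To finish, since $\mathbb{V}_0 = 0$ by Lemma~\ref{lem:adjointCn}, the weight diagram of a representation with Cayley configuration consists of exactly $2n = 2\dim\mathfrak{h}$ vertices forming a single $\W$-orbit, so $2^{k}\binom{n}{k} = 2n$. An elementary estimate --- e.g. combining this with $\binom{n}{k} \ge n/k$ to obtain $2^{k-1} \le k$, whence $k \in \{1,2\}$, and noting $k = 2$ would force $n = 2$ --- yields $k = 1$ for $n \ge 3$, i.e. $\omega_1 = L_1$. By Table~\ref{tab:simple} this is the highest weight of the standard representation $\mathbb{C}^{2n}$, so the uniqueness in Theorem~\ref{thm: FTRTSLA} identifies the irreducible $\mathbb{V}$ with $\mathbb{C}^{2n}$. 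I expect no real obstacle here: the only part requiring genuine care is the reflection/string bookkeeping that reduces $\omega_1$ to the form $\sum_{j=1}^{k} L_j$, and that is a direct transcription of the type-$B_n$ argument in Lemma~\ref{lem:Bn}; everything after is a short count.
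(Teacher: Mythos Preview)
Your proposal is correct and follows essentially the same approach as the paper: invoke Lemma~\ref{lem:adjointCn} to get $\mathbb{V}_0=0$, use a reflection/root-string argument (the paper just says ``similar to Lemma~\ref{lem:Bn}'') to force $\omega_1=\sum_{j=1}^k L_j$, and then solve $2^k\binom{n}{k}=2n$ with $n\ge 3$ to obtain $k=1$. The only cosmetic difference is your choice of the reflection in $L_1+L_2$ rather than in $2L_1$, which makes the case analysis one line longer but is otherwise equivalent.
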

\subsubsection{Type $D_n$ ($n \ge 4$)} Repeating the argument for $B_n$ and $C_n$,  we may prove that the set of weight of a representation with Cayley configuration is a $\W$-orbit.
\begin{lemma}
\label{lem:adjointDn}
Let $G$ be a  complex simple Lie group of type $C_n$ ($n\ge 3$) and let $\rho: G \to \GL(\mathbb{V})$ be a representation with Cayley configuration.  Then we have $\mathbb{V}_0 = 0$.   
\end{lemma}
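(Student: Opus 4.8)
The plan is to mirror the proof of Lemma~\ref{lem:adjointBn}, reducing the claim to a statement about which dominant roots of the $C_n$ root system can serve as the highest weight, and then to exploit the fact that $C_n$ has two root lengths. Suppose, for contradiction, that $\mathbb{V}_0 \neq 0$. By the definition of a representation with Cayley configuration this forces $S \coloneqq \{\lambda \in \Lambda : \mathbb{V}_\lambda \neq 0\} = \mathcal{O} \sqcup \{0\}$, where $\mathcal{O} \coloneqq \W \omega_1 = \{\pm\omega_1,\dots,\pm\omega_n\}$ has cardinality $2n$ and rank $n$, and $\omega_1$ is the highest weight of $\mathbb{V}$. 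I will derive a contradiction by showing that no candidate $\omega_1$ is compatible with both $|\mathcal{O}| = 2n$ and $S = \mathcal{O}\sqcup\{0\}$.

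First I would locate $\omega_1$. Since $0 \in S \subseteq \omega_1 + \mathbb{Z}\Phi$, we have $-\omega_1 \in \mathbb{Z}\Phi$, so $\omega_1$ lies in the root lattice and, being dominant, $\omega_1 \in \mathbb{N}\Delta^+$. By Theorem~\ref{thm: FTRTSLA} we have $S = \Conv(\mathcal{O}) \cap (\omega_1 + \mathbb{Z}\Phi)$, and because $\mathcal{O}$ is a single $\W$-orbit its nonzero elements lie in distinct Weyl chambers; hence $0$ and $\omega_1$ are the only dominant weights of $\mathbb{V}$. Exactly as in Lemma~\ref{prop:adjoint} and Lemma~\ref{lem:adjointBn}, this minimality forces $\omega_1$ to be a dominant root. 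Reading off Table~\ref{tab:simple}, the root system of $C_n$ splits into the two Weyl orbits $\Phi = \W(2L_n) \sqcup \W(L_1 - L_2)$ (long and short roots), and the only dominant roots are the two highest roots $2L_1$ (long) and $L_1 + L_2$ (short). Thus $\omega_1 \in \{2L_1,\ L_1 + L_2\}$, and it remains to rule out each.

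For the \emph{short} case $\omega_1 = L_1 + L_2$, the orbit is the full set of short roots, so $|\mathcal{O}| = 4\binom{n}{2} = 2n(n-1)$, which exceeds $2n$ for $n \geq 3$ and contradicts $|\mathcal{O}| = 2n$. For the \emph{long} case $\omega_1 = 2L_1$, the orbit is $\{\pm 2L_i\}$ with $|\mathcal{O}| = 2n$, so the cardinality count alone is inconclusive; this is the step I expect to be the main obstacle, and it is genuinely special to the non-simply-laced type $C_n$, where the long-root orbit has size exactly $2n$. To finish here I would note that the irreducible representation with highest weight $2L_1$ is the adjoint representation, whose set of nonzero weights is all of $\Phi$; equivalently, $L_1 + L_2 = 2L_1 - (L_1 - L_2)$ lies both in $\Conv(\{\pm 2L_i\})$ and in $\omega_1 + \mathbb{Z}\Phi$, so $L_1 + L_2 \in \Conv(\mathcal{O}) \cap (\omega_1 + \mathbb{Z}\Phi) = S$, yet $L_1 + L_2 \notin \mathcal{O} \sqcup \{0\}$. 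This contradicts $S = \mathcal{O} \sqcup \{0\}$. As both cases are impossible, we conclude $\mathbb{V}_0 = 0$, which also recovers Lemma~\ref{lem:adjointCn}.
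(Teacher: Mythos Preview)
Your argument is correct and follows essentially the same route as the paper, which simply instructs the reader to repeat the proof of Lemma~\ref{lem:adjointBn}. Both reduce to showing that $\omega_1$ must be a dominant root and then eliminating the two candidates $L_1+L_2$ and $2L_1$; your orbit count for the short root and your explicit exhibition of the extra weight $L_1+L_2$ for the long root are exactly equivalent to the dimension comparison $\lvert\Phi\rvert=2n^2>2n$ that the $B_n$ template would invoke for the adjoint case.
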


We recall that a \emph{spinorial representation} is an irreducible representation $\rho: G \to \GL(\mathbb{V})$ with the highest weight $\omega = \frac{1}{2}\sum_{j=1}^{n} a_j L_i$ where $a_1 \ge \cdots \ge a_{n-1} \ge |a_n|$ are odd.  Moreover,   a \emph{spinor} representations is an irreducible representation with the highest weight $\omega_+ \coloneqq \frac{1}{2} \sum_{j=1}^n L_j$ or $\omega_- \coloneqq \left( \frac{1}{2}  \sum_{j=1}^{n} L_j \right) - L_n$.
\begin{lemma}\label{lem:spinorDn}
Let $G$ be a complex simple Lie group of type $D_n$ ($n\ge 4$) and let $\rho: G \to \GL(\mathbb{V})$ be a representation with Cayley configuration.  If $\rho$ is spinorial,  then $n = 4$ and $\rho$ is a spinor representation. 
\end{lemma}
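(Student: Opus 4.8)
The plan is to pin down the highest weight $\omega_1$ of $\mathbb V$ from the shape of its weight diagram and then count the Weyl orbit it generates. First I would note that $0$ is not a weight of $\mathbb V$: the weights of a spinorial representation all lie in $\tfrac12\sum_{j=1}^n L_j+\ZZ\Phi$, a coset containing no vector with integral coordinates, so $\mathbb V_0=0$ (this is also Lemma~\ref{lem:adjointDn}). Hence by Theorem~\ref{thm:main} and Corollary~\ref{rmk:polytope} the weight set of $\mathbb V$ is exactly the orbit $\mathcal O\coloneqq\W\omega_1=\{\pm\omega_1,\dots,\pm\omega_n\}$, it has $2n$ elements, and every weight is a vertex of the polytope $\Conv(\mathcal O)$.

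The key geometric input I would isolate is the \emph{collinearity principle}: no three distinct weights of $\mathbb V$ lie on a line. Indeed, suppose $\sigma\in\W$ and $\omega_1-\sigma(\omega_1)=k\alpha$ for a root $\alpha$ and an integer $k\ge 2$. Then each $\omega_1-j\alpha$ with $0\le j\le k$ lies on the segment $[\sigma(\omega_1),\omega_1]\subseteq\Conv(\mathcal O)$ and in $\omega_1+\ZZ\Phi$, hence is a weight by Theorem~\ref{thm: FTRTSLA}; but then $\omega_1-\alpha=\tfrac12\omega_1+\tfrac12(\omega_1-2\alpha)$ is a proper convex combination of the two distinct points $\omega_1,\omega_1-2\alpha$ of $\Conv(\mathcal O)$, so it cannot be a vertex, a contradiction. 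Thus for no $\sigma\in\W$ is $\omega_1-\sigma(\omega_1)$ an integer multiple, of absolute value at least $2$, of a root.

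Now write $\omega_1=\tfrac12\sum_{j=1}^n a_jL_j$ with $a_1\ge\cdots\ge a_{n-1}\ge|a_n|$ all odd, so in particular all $a_j\neq 0$. Applying the reflections $r_{L_1-L_n},r_{L_1+L_n}\in\W$ (associated to the roots $L_1\mp L_n$) gives
\[
\omega_1-r_{L_1-L_n}(\omega_1)=\tfrac{a_1-a_n}{2}(L_1-L_n),\qquad \omega_1-r_{L_1+L_n}(\omega_1)=\tfrac{a_1+a_n}{2}(L_1+L_n).
\]
Since $a_1\ge|a_n|$ and $a_1,a_n$ are odd, the numbers $k_1\coloneqq\tfrac{a_1-a_n}{2}$ and $k_2\coloneqq\tfrac{a_1+a_n}{2}$ are nonnegative integers with $k_1+k_2=a_1$. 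If $a_1\ge 3$ then $\max(k_1,k_2)\ge 2$, so one of these two reflections contradicts the collinearity principle. Hence $a_1=1$, which forces $a_1=\cdots=a_{n-1}=1$ and $a_n=\pm1$; that is, $\omega_1$ equals $\omega_+$ or $\omega_-$ and $\rho$ is a spinor representation.

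To conclude I would compute $|\mathcal O|$. Since $\W$ (for type $D_n$) acts by coordinate permutations together with sign changes in an even number of coordinates, the orbit of $\omega_+$ (resp.\ $\omega_-$) is $\{\tfrac12\sum_j\varepsilon_jL_j:\varepsilon_j\in\{\pm1\},\ \prod_j\varepsilon_j=1\ (\text{resp. }-1)\}$, which has $2^{n-1}$ elements. Comparing with $|\mathcal O|=2n$ yields $2^{n-1}=2n$, i.e.\ $2^{n-2}=n$, whose only solution with $n\ge 4$ is $n=4$. The one place that demands genuine care is the collinearity principle together with the sign bookkeeping for $a_n$ (which may be negative); once a single reflection with multiplicity at least $2$ has been exhibited, the rest is immediate.
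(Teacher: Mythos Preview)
Your proof is correct and follows essentially the same approach as the paper. The paper likewise shows that the root-string between $\omega_1$ and a reflected weight can have length at most one (your ``collinearity principle''), using the reflections $r_{L_1-L_j}$ and $r_{L_1+L_j}$ for all $2\le j\le n$ to obtain $a_1\pm a_j\le 2$; you economize by taking only $j=n$ and reading off $a_1=1$ from $k_1+k_2=a_1$ and the oddness of $a_1$, then invoking the ordering $a_1\ge\cdots\ge a_{n-1}\ge|a_n|$. Both arguments then finish with the orbit count $2^{n-1}=2n$. The paper additionally verifies that the two spinor representations of $D_4$ do have Cayley configuration, but as you implicitly note, that converse is not part of the lemma's statement.
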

\begin{proof}
We first prove that $\rho$ is a spinor representation.  We only sketch the proof since it is similar to that of Lemma~\ref{lem:Bn}.  Let $\omega_1 = \frac{1}{2}\sum_{j=1}^{n} a_j L_j$ be the highest weight of $\rho$,  where $a_1 \ge \cdots \ge a_{n-1} \ge |a_n| \ge 0$ are some odd integers.  For $2 \le j \le n$,  we let $\sigma_j \in \W$ be the reflection that fixes $L_k$ for $k\ne 1, j$ and interchanges $L_1$ and $L_j$.  Then we have $\sigma_j (\omega_1) - \omega_1 = \frac{1}{2}(a_j - a_1)(L_1 - L_j)$.  Hence we may obtain that $a_1 - a_j \le 2$.  On the other side,  we let $\tau_j$ be the reflection such that 
\[
\tau_j(L_1) = -L_j,\; \tau_j(L_j) = -L_1,\; \tau_j (L_k) = L_k,\quad 2 \le k \ne j \le n.
\]
We obtain $\tau_j(\omega_1) - \omega_1 =- \frac{1}{2}(a_1+ a_j)(L_1 + L_j)$ from which $a_1 + a_j \le 2$.  This implies $a_k = 1$ if $1 \le k \le n-1$ and $a_n = \pm 1$.  Hence $\rho$ is a spinor representation.  

Next we prove that $n = 4$.  By Table~\ref{tab:simple},  we have 
\begin{align*}
\W \omega_+ &= \left\lbrace
\frac{1}{2} \sum_{j=1}^{n} a_j L_j: |a_j| = 1,\; \sum_{j=1}^n a_j \equiv n \pmod{4}, \; 1\le j \le n 
\right\rbrace,\\ 
\W \omega_- &= \left\lbrace
\frac{1}{2} \sum_{j=1}^{n} a_j L_j: |a_j| = 1,\; \sum_{j=1}^n a_j \not\equiv  n \pmod{4},  \; 1 \le j \le n 
\right\rbrace.
\end{align*}
Since $\mathbb{V}$ is a representation with Cayley configuration and spinorial,  we have either $|\W \omega_+| = 2 \dim (\mathfrak{h})$ or $|\W \omega_-| = 2 \dim (\mathfrak{h})$.  This forces $2^{n-1}=2n$ and hence $n = 4$.  

Lastly,  we notice that if $n = 4$ then
\begin{align*}
\W \omega_+ &= \left\lbrace
\frac{1}{2} \sum_{j=1}^4 a_j L_j: |a_j| = 1, \; \sum_{j=1}^4 a_j \in \{-4,  0,4\}, \;  1 \le j \le 4
\right\rbrace,  \\
W \omega_- &= \left\lbrace
\frac{1}{2} \sum_{j=1}^4 a_j L_j: |a_j| = 1, \; \sum_{j=1}^4 a_j \in \{-3,-1, 1,3\}, \;  1 \le j \le 4
\right\rbrace.  
\end{align*}
It is straightforward to verify that such spinor representations are representations with Cayley configuration.
\end{proof}

Now we are able to determine all representations with Cayley configuration.
\begin{lemma}\label{lem:Dn}
Let $G$ be a simply-connected complex simple Lie group of type $D_n$ ($n\ge 4$) and let $\rho: G \to \GL(\mathbb{V})$ be a representation with Cayley configuration.  Then either $\rho$ is the standard representation or $n =4$ and $\rho$ is one of the two spinor representations.
\end{lemma}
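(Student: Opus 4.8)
The plan is to reduce, by means of the two preceding lemmas, to the case that $\rho$ is non-spinorial with $\mathbb{V}_0 = 0$, and then to pin down the highest weight by the same root-string mechanism used for types $A_n$, $B_n$ and $C_n$.

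First I would apply Lemma~\ref{lem:adjointDn} to conclude $\mathbb{V}_0 = 0$, so that by the definition of a representation with Cayley configuration (cf.\ Corollary~\ref{rmk:polytope}) the weight set is a single Weyl orbit $\mathcal{O} \coloneqq \W\omega_1 = \{\pm\omega_1, \dots, \pm\omega_n\}$, with $\omega_1, \dots, \omega_n$ linearly independent and every weight multiplicity equal to $1$; here $\omega_1$ is the highest weight. If $\rho$ is spinorial, Lemma~\ref{lem:spinorDn} already forces $n = 4$ and $\rho$ to be one of the two spinor representations, so we may assume $\rho$ is non-spinorial. Then, by the description of the fundamental Weyl chamber in Table~\ref{tab:simple}, we may write $\omega_1 = \sum_{j=1}^{n} a_j L_j$ with integers $a_1 \ge a_2 \ge \dots \ge a_{n-1} \ge |a_n| \ge 0$, and the goal becomes to show that $\omega_1 = L_1$.

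The basic tool is the following observation: no three distinct elements of $\mathcal{O}$ are collinear, since the elements of a Weyl orbit all have the same length for a $\W$-invariant inner product on $\mathfrak{h}^\ast$ and hence lie on a common sphere, which a line meets in at most two points. Combined with the fact that, $\omega_1$ being the highest weight, the weights $\omega_1 - k\alpha$ with $0 \le k \le m$ all occur in $\mathbb{V}$ for every positive root $\alpha$, where $m \coloneqq \langle \omega_1, \alpha^\vee \rangle$, this yields $m \le 1$: if $m \ge 2$ the points $\omega_1, \omega_1 - \alpha, \dots, \omega_1 - m\alpha$ are at least three distinct collinear weights, so either one of them is $0$ --- impossible since $\mathbb{V}_0 = 0$ --- or they all lie in $\mathcal{O}$, contradicting the observation. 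Since $D_n$ is simply laced, $\langle \omega_1, (L_i - L_j)^\vee \rangle = a_i - a_j$ and $\langle \omega_1, (L_i + L_j)^\vee \rangle = a_i + a_j$ for $1 \le i < j \le n$, so we obtain $a_i - a_j \le 1$ and $a_i + a_j \le 1$ for all such $i, j$.

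To finish, I would first rule out $a_n < 0$: in that case $a_{n-1} \ge |a_n| \ge 1$, whence $a_{n-1} - a_n \ge 2$, contradicting $a_{n-1} - a_n \le 1$. Hence $a_n \ge 0$, and then $a_1 - a_n \le 1$ gives $a_1 \le 1$, so by the ordering every $a_j$ lies in $\{0,1\}$ and $\omega_1 = L_1 + \dots + L_k$ for some $0 \le k \le n$. If $k \ge 2$, applying the above with $\alpha = L_1 + L_2$ (so $m = 2$) produces the three collinear weights $\omega_1$, $\omega_1 - \alpha = L_3 + \dots + L_k$ and $\omega_1 - 2\alpha$; for $k = 2$ the middle one is $0$, contradicting $\mathbb{V}_0 = 0$, and for $k \ge 3$ all three are nonzero, hence lie in $\mathcal{O}$, contradicting the observation. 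Therefore $k \le 1$, and since $\rho$ is non-trivial $\omega_1 \ne 0$, so $\omega_1 = L_1$ and $\mathbb{V}$ is the standard representation. The only point requiring care is the bookkeeping of root strings through $\omega_1$ that meet the origin: this is precisely what $\mathbb{V}_0 = 0$ (Lemma~\ref{lem:adjointDn}) excludes, and it is exactly this case that eliminates the adjoint representation $\mathfrak{so}_{2n}$, whose highest weight $L_1 + L_2$ would otherwise survive the coefficient bounds.
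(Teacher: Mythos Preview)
Your argument follows essentially the same approach as the paper's proof: reduce via Lemmas~\ref{lem:adjointDn} and \ref{lem:spinorDn} to the non-spinorial case with $\mathbb{V}_0=0$, then use root-strings through the highest weight (equivalently, reflections in $L_1\pm L_j$) together with the fact that the Weyl orbit lies on a sphere to bound the coefficients $a_j$. The paper states this more tersely (``choose different reflections to prove that $a_1\pm a_j\le 1$'') but the mechanism is identical.

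Two small points. First, the sentence ``$a_1-a_n\le 1$ gives $a_1\le 1$'' is not a valid implication on its own; you need $a_1+a_n\le 1$ together with $a_n\ge 0$ (or simply add $a_1+a_n\le 1$ and $a_1-a_n\le 1$) to obtain $a_1\le 1$. This is clearly just a slip, since you have already derived $a_i+a_j\le 1$ for all $i<j$. Second, your final paragraph treating $k\ge 2$ via the $L_1+L_2$-string is redundant: once you know $a_i+a_j\le 1$ for all $i<j$ and every $a_j\in\{0,1\}$, the case $a_1=a_2=1$ already violates $a_1+a_2\le 1$ directly, so $k\le 1$ follows without revisiting the root-string. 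None of this affects correctness.
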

\begin{proof}
By Lemma~\ref{lem:spinorDn},  it suffices to consider irreducible representations that are not spinorial.  The rest of the proof is similar to that of Lemma~\ref{lem:spinorDn}.  We write the highest weight $\omega_1$ of $\mathbb{V}$ as $\omega_1 = \sum_{j=1}^n a_j L_j$ where $a_1 \ge \cdots \ge a_{n-1} \ge |a_n| \ge 0$ are some integers.  For $2 \le j \le n$,  we may again choose different reflections to prove that $a_1 \pm a_j \le 1$,  which implies that $a_1 = 1$ and $a_j = 0$ for each $2 \le j \le n$.  According to Lemma~\ref{lem:adjointDn},  $\rho$ is the standard representation.
\end{proof}
\subsection{Classification}
In Subsection~\ref{subsec:CayleyRep}, we classify all representations with Cayley configuration of classical complex simple Lie groups.  In this Subsection,  we further classify Cayley representations for classical complex simple Lie groups and their compact real forms.  To achieve the goal,  we first recall isomorphisms among complex simple Lie algebras of small dimension.
\begin{remark}\label{rmk:small}
We recall that for small dimensional complex simple Lie algebras,  there are some isomorphisms among them: 
\begin{enumerate}[(a)]
\item $\mathfrak{sl}_2(\CC)\simeq \mathfrak{so}_3(\CC)  \simeq \mathfrak{sp}_2(\CC)$.
\item $\mathfrak{sl}_4(\CC)\simeq \mathfrak{so}_6(\CC)$.
\item $\mathfrak{so}_5(\CC) \simeq \mathfrak{sp}_4(\CC)$.
\end{enumerate}
Moreover,  $\mathfrak{so}_2(\CC) \simeq \mathbb{C}$ and $\mathfrak{so}_4(\CC)\simeq \mathfrak{sl}_2(\CC) \times \mathfrak{sl}_2(\CC)$ are not simple.
\end{remark}

\begin{theorem}[Classification I]\label{thm:classification1}
Let $G$ be a classical complex simple Lie group and let $\rho: G \to \GL(\mathbb{V})$ be an irreducible representation.  Suppose $\mathfrak{g}$ is the Lie algebra of $G$.  Then the followings are equivalent
\begin{enumerate}[(a)]
\item $\rho$ is a representation with Cayley configuration. \label{thm:classification1 item1}
\item $\rho$ is a Cayley representation.\label{thm:classification1 item2}
\item One of the followings holds: \label{thm:classification1 item3}
\begin{enumerate}[(i)]
\item $\mathfrak{g} \simeq \mathfrak{sl}_2(\CC)$ and $\mathbb{V} \simeq \mathbb{C}^2$.  Under the identification $\mathfrak{sl}_2(\CC) \simeq \mathfrak{sp}_2(\CC)$,  $\mathbb{V}$ is isomorphic to the standard representation.\label{eg: sl2 standard}
\item $\mathfrak{g} \simeq \mathfrak{sl}_2(\mathbb{C})$ and $\mathbb{V} \simeq \mathfrak{sl}_2(\mathbb{C})$.  Under the identification $\mathfrak{sl}_2(\CC) \simeq \mathfrak{so}_3(\CC)$,  we have $\mathbb{V} \simeq \mathbb{C}^3$. \label{eg: sl2 adjoint}
\item $\mathfrak{g} \simeq \mathfrak{sl}_4(\CC)$ and $\mathbb{V} \simeq \mathsf{\Lambda}^2 \mathbb{C}^4$.  Under the identification $\mathfrak{sl}_4(\mathbb{C}) \simeq \mathfrak{so}_6(\mathbb{C})$,  we have $\mathbb{V}\simeq \mathbb{C}^6$.\label{eg: sl4}
\item $\mathfrak{g} \simeq \mathfrak{so}_{2n+1}(\CC)$ ($n \ge 2$) and $\mathbb{V} \simeq \mathbb{C}^{2n+1}$.  \label{eg: so standard}
\item $\mathfrak{g} \simeq \mathbb{S})$ and $\mathbb{V}$ is the spin representation.  Under the identification $\mathfrak{so}_5(\CC) \simeq \mathfrak{sp}_4(\CC)$,  we have $\mathbb{V} \simeq \mathbb{C}^4$. \label{eg: so5 spin}
\item $\mathfrak{g} \simeq \mathfrak{sp}_{2n}(\CC)$ ($n \ge 3$) and $\mathbb{V} \simeq \mathbb{C}^{2n}$.  \label{eg: sp standard}
\item $\mathfrak{g} \simeq \mathfrak{so}_{8}(\CC)$ and $\mathbb{V}$ is one of the two spinor representations. \label{eg: so8 spin}
\item $\mathfrak{g} \simeq \mathfrak{so}_{2n}(\CC)$ ($n \ge 4$) and $\mathbb{V} \simeq \mathbb{C}^{2n}$.\label{eg: so standard}
\end{enumerate}
\item {For any $x \in \mathfrak{g}$,  we have $C(d\rho(x)) \in \rho(G)$ as long as  $1 - d\rho(x)$ is invertible. } \label{thm:classification1 item4}
\end{enumerate}
\end{theorem}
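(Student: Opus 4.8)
The plan is to establish the cycle $(b)\Rightarrow(a)\Leftrightarrow(c)\Rightarrow(b)$ together with $(b)\Leftrightarrow(d)$, which yields the full theorem. For $(b)\Rightarrow(a)$: since $\mathbb{F}=\mathbb{C}$ we automatically have $\mathfrak{g}_0=\mathfrak{h}$, and because $d\rho$ is faithful on the simple Lie algebra $\mathfrak{g}$ its restriction to $\mathfrak{h}$ is injective, so $\dim_\CC d\rho(\mathfrak{h})=\dim_\CC\mathfrak{h}$; hence Theorem~\ref{thm:main} applies directly and a Cayley $\rho$ has the weight diagram described in Corollary~\ref{rmk:polytope}, i.e.\ $\rho$ has Cayley configuration. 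For $(a)\Leftrightarrow(c)$: this is exactly what the type-by-type analysis of Subsection~\ref{subsec:CayleyRep} accomplishes. Lemmas~\ref{lem:An}, \ref{lem:Bn}, \ref{lem:C_n} and \ref{lem:Dn} list the representations with Cayley configuration of the classical complex simple groups of types $A_n,B_n,C_n,D_n$ in the indicated ranges, and the residual small ranks are absorbed through the exceptional isomorphisms of Remark~\ref{rmk:small}: $\mathfrak{sl}_2\simeq\mathfrak{so}_3\simeq\mathfrak{sp}_2$ identifies $\CC^2$ and $\mathfrak{sl}_2$ with the standard modules of $\mathfrak{sp}_2$ and $\mathfrak{so}_3$, $\mathfrak{sl}_4\simeq\mathfrak{so}_6$ identifies $\mathsf{\Lambda}^2\CC^4$ with the standard $\CC^6$, and $\mathfrak{so}_5\simeq\mathfrak{sp}_4$ identifies the spin module of $B_2$ with the standard $\CC^4$ of $C_2$. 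Collecting the outputs and removing duplicates produces precisely the list (i)--(viii), and conversely each entry is read off the corresponding lemma.

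The implication $(c)\Rightarrow(b)$ I would verify entry by entry. In every case the image $d\rho(\mathfrak{g})\subseteq\mathfrak{gl}(\mathbb{V})$ coincides, after an appropriate change of basis of $\mathbb{V}$, with the \emph{full} quadratic Lie algebra $\mathfrak{so}_m$ or $\mathfrak{sp}_m$ where $m=\dim\mathbb{V}$: for the standard modules of $\mathfrak{so}_{2n+1},\mathfrak{sp}_{2n},\mathfrak{so}_{2n}$ this is the definition, while for the other entries it follows from a dimension count combined with Remark~\ref{rmk:small} (e.g.\ $\dim\ad(\mathfrak{sl}_2)=\dim\mathfrak{so}_3=3$; $\dim\mathfrak{sl}_4=\dim\mathfrak{so}_6=15$ forces $d\rho(\mathfrak{sl}_4)=\mathfrak{so}_6$ on $\mathsf{\Lambda}^2\CC^4\simeq\CC^6$; $\dim\mathfrak{so}_5=\dim\mathfrak{sp}_4=10$ forces $d\rho(\mathfrak{so}_5)=\mathfrak{sp}_4$ on the $4$-dimensional spin module; $\dim\mathfrak{so}_8=28=\dim\mathfrak{so}(\CC^8)$ forces $d\rho(\mathfrak{so}_8)=\mathfrak{so}_8$ on each half-spin module, or one simply invokes Remark~\ref{rmk:spin8}). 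Since a quadratic Lie algebra $\mathfrak{u}=\{u:u^\tp B=-Bu\}$, with $B$ symmetric or alternating, has the power span property — indeed $(u^{2k+1})^\tp B=-Bu^{2k+1}$ for every $k$ — Theorem~\ref{cond:3} then shows that $\rho$ is Cayley. Equivalently, $\rho(G)$ is here a quadratic matrix group, to which $C$ is classically applicable.

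The equivalence $(b)\Leftrightarrow(d)$ is the only place where a new argument is required. The implication $(d)\Rightarrow(b)$ is immediate, since for $x$ near $0$ the operator $\Id_\mathbb{V}-d\rho(x)$ is invertible. For $(b)\Rightarrow(d)$ I would pass to the algebraic category: by Proposition~\ref{prop:semisimple algebraic} the simple subalgebra $d\rho(\mathfrak{g})\subseteq\mathfrak{gl}(\mathbb{V})$ is the Lie algebra of a connected algebraic subgroup of $\GL(\mathbb{V})$, and by uniqueness of the connected immersive subgroup with a prescribed Lie algebra that group is $\rho(G)$ (one may assume $G$ connected, or pass to the universal cover via Proposition~\ref{prop: group to Lie algebra}), so $\rho(G)$ is Zariski closed in $\GL(\mathbb{V})$. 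The set $U\coloneqq\{x\in\mathfrak{g}:\det(\Id_\mathbb{V}-d\rho(x))\ne 0\}$ is a nonempty Zariski-open, hence irreducible, subvariety of $\mathfrak{g}\simeq\CC^N$, on which $\psi\colon x\mapsto C(d\rho(x))$ is a morphism of varieties; therefore $\psi^{-1}(\rho(G))$ is Zariski closed in $U$. By $(b)$ it contains a nonempty Euclidean-open neighbourhood of $0$, and since a proper Zariski-closed subset of an irreducible complex variety has strictly smaller dimension and hence empty Euclidean interior, we conclude $\psi^{-1}(\rho(G))=U$, which is exactly $(d)$.

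The genuine substance of the proof lies in the step $(a)\Leftrightarrow(c)$, the type-by-type classification, which is already carried out in Subsection~\ref{subsec:CayleyRep}; within the present argument the points demanding care are the bookkeeping of the exceptional isomorphisms of Remark~\ref{rmk:small}, so that the coincidences $A_3\!=\!D_3$, $B_2\!=\!C_2$, $A_1\!=\!B_1\!=\!C_1$ and the triality of $D_4$ neither duplicate nor omit entries of the list, and the verification in $(c)\Rightarrow(b)$ that each surviving representation realises its Lie algebra as a full orthogonal or symplectic Lie algebra rather than a proper quadratic subalgebra. The implication $(b)\Rightarrow(d)$ is the one genuinely new step; its mild subtlety is the interplay between the Zariski and Euclidean topologies needed to promote the local Cayley property to a statement valid on the entire domain of definition of $C\circ d\rho$.
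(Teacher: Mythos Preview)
Your argument is correct and, for the implications $(b)\Rightarrow(a)$, $(a)\Leftrightarrow(c)$ and $(c)\Rightarrow(b)$, follows the paper's proof essentially line for line: the paper likewise invokes Theorem~\ref{thm:main}, the four classification lemmas of Subsection~\ref{subsec:CayleyRep}, Remark~\ref{rmk:small} for the low-rank coincidences, and the observation that every surviving entry realises $d\rho(\mathfrak{g})$ as a full quadratic Lie algebra (with Remark~\ref{rmk:spin8} for the $D_4$ half-spin cases).

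The one genuine divergence is how property $(d)$ is obtained. The paper closes the cycle via $(c)\Rightarrow(d)$, simply noting that in each listed case $\rho(G)$ is a quadratic matrix group, for which the classical Cayley transform is globally defined wherever $\Id-d\rho(x)$ is invertible; this is an explicit, case-by-case check. You instead prove the abstract implication $(b)\Rightarrow(d)$: using Proposition~\ref{prop:semisimple algebraic} to recognise $\rho(G)$ as a Zariski-closed subgroup and then arguing that the morphism $x\mapsto C(d\rho(x))$, which lands in $\rho(G)$ on a Euclidean-open set, must land in $\rho(G)$ on all of the irreducible domain $U$. Your route is more conceptual and has the pleasant by-product that $(b)\Leftrightarrow(d)$ holds for \emph{any} irreducible representation of a complex semisimple group, independently of the classification; the paper's route is shorter here because the list is already in hand and each entry is visibly a quadratic group. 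Both are valid ways to finish.
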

\begin{proof}
The equivalence between \eqref{thm:classification1 item1} and \eqref{thm:classification1 item3} follows  immediately from Lemmas~\ref{lem:An},  \ref{lem:Bn},  \ref{lem:C_n},  \ref{lem:Dn} and Remark \ref{rmk:small}. The implication \eqref{thm:classification1 item2} $\implies$ \eqref{thm:classification1 item1} is a consequence of Theorem~\ref{thm:main}.  For the converse,  we notice that except for \eqref{eg: so8 spin},  all cases in \eqref{thm:main:item3} correspond to standard representations of quadratic matrix groups.  Thus,  they are Cayley representations.  By Remark~\ref{rmk:spin8},  \eqref{eg: so8 spin} is also Cayley and this completes the proof of \eqref{thm:classification1 item1} $\implies$ \eqref{thm:classification1 item2}.  {Lastly,  \eqref{thm:classification1 item4} implies \eqref{thm:classification1 item1} is clear by definition and it is straightforward to verify the implication  \eqref{thm:classification1 item3} $\implies$ \eqref{thm:classification1 item4}. }
\end{proof}
\begin{remark}
We observe that \eqref{thm:classification1 item4} is a global (and stronger) version of \eqref{thm:classification1 item1}.  However,  Theorem~\ref{thm:classification1} reveals that the two properties are equivalent for classical complex simple Lie groups.
\end{remark}

Next we classify Cayley representations of $G$ when $G$ is compact.
\begin{theorem}[Classification II]\label{thm:classification2}
Let $G$ be a compact real simple Lie group and let $\rho: G \to \GL(\mathbb{V})$ be an irreducible representation of $G$.  The Cayley transform is applicable to $\rho$ if and only if $(\mathfrak{g},  \mathbb{V})$ belongs to the following list 
\begin{enumerate}[(a)]
    \item $(\mathfrak{su}_2(\CC),  \mathbb{C}^2)$.  Under the identification $\mathfrak{su}_2(\CC) \simeq \mathfrak{so}_3(\RR)$,  $\mathbb{V}$ is isomorphic to the spinor representation. 
    \item $(\mathfrak{su}_2(\mathbb{C}), \mathfrak{sl}_2(\CC))$,  where $\mathfrak{sl}_2(\CC)$ is the complexification of the adjoint representation of $\mathfrak{su}_{2}(\CC)$.  Under the identification $\mathfrak{su}_2(\CC) \simeq \mathfrak{so}_3(\RR)$,  we have $\mathbb{V} \simeq \mathbb{C}^3$.
    \item $(\mathfrak{su}_4(\CC), \mathsf{\Lambda}^2 \mathbb{C}^4)$.  Under the identification $\mathfrak{su}_4(\mathbb{C}) \simeq \mathfrak{so}_6(\mathbb{R})$,  we have $\mathbb{V}\simeq \mathbb{C}^6$.
    \item $(\mathfrak{so}_{2n+1}(\RR),  \mathbb{C}^{2n+1})$ ($n \ge 2$).
    \item $(\mathfrak{so}_5(\mathbb{R}),  \mathbb{S})$. $\mathbb{S}$ is the spinor representation.  Under the identification $\mathfrak{so}_5(\RR) \simeq \mathfrak{u}_2(\HH)$,  we have $\mathbb{V} \simeq \mathbb{H}^2$. Here $\mathbb{H}$ denotes the quaternion algebra.
    \item $(\mathfrak{u}_{n}(\HH),  \mathbb{H}^{n})$ ($n \ge 3$). 
    \item $(\mathfrak{so}_{8}(\RR),  \mathbb{S}_+)$ or $(\mathfrak{so}_{8}(\RR),  \mathbb{S}_-)$,  where $\mathbb{S}_+$ and $\mathbb{S}_-$ are the two spinor representations. 
    \item $(\mathfrak{so}_{2n}(\RR),  \mathbb{C}^{2n} )$ ($n \ge 4$).
\end{enumerate}
\end{theorem}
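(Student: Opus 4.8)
The plan is to obtain the stated list by combining Characterization~IV (Proposition~\ref{cond: Lie alg 3 TFAE}) with Classification~I (Theorem~\ref{thm:classification1}), transported along the complexification of $G$. Since $G$ is compact and simple, every nontrivial irreducible representation $\rho$ of $G$ has faithful differential, so Proposition~\ref{cond: Lie alg 3 TFAE} applies and says that the Cayley transform is applicable to $\rho$ if and only if $\rho$ is a representation with Cayley configuration; the trivial representation is not on the list and is disposed of separately. Hence the problem reduces to enumerating the irreducible representations of a compact simple Lie group with Cayley configuration.

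\textbf{Reduction to the complex case.} Replacing $G$ by its universal cover, which is harmless by Proposition~\ref{prop: group to Lie algebra}, let $G^{\CC}$ be its complexification, a simply connected complex simple Lie group, and let $\Ind(\rho)\colon G^{\CC}\to\GL(\mathbb{V})$ be the induced representation. By Proposition~\ref{prop: irr R,C rep}, $\rho=\Res(\Ind(\rho))$ is irreducible if and only if $\Ind(\rho)$ is, and since $d(\Ind\rho)$ extends $d\rho$ by scalars, $\rho$ and $\Ind(\rho)$ share the same weight lattice and the same weight spaces, hence the same weight diagram; therefore $\rho$ has Cayley configuration if and only if $\Ind(\rho)$ does. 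If $\mathfrak{g}^{\CC}$ is of classical type, Theorem~\ref{thm:classification1} lists all representations of $G^{\CC}$ with Cayley configuration. If $\mathfrak{g}^{\CC}$ is exceptional, the calculations of Subsection~\ref{subsec:CayleyRep}, or \cite[Theorem~1.31]{LPR06}, show that there are none: for types $G_2,F_4,E_6,E_7,E_8$ no Weyl orbit has cardinality $2\dim\mathfrak{h}$, so no Cayley configuration can occur.

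\textbf{Translation to the compact form.} It then remains to read off each entry of Theorem~\ref{thm:classification1} on the unique compact real form of the corresponding complex simple Lie algebra: $\mathfrak{su}_{n+1}(\CC)$ for $\mathfrak{sl}_{n+1}(\CC)$, $\mathfrak{so}_m(\RR)$ for $\mathfrak{so}_m(\CC)$, and $\mathfrak{u}_n(\HH)$ for $\mathfrak{sp}_{2n}(\CC)$; restriction then produces the corresponding irreducible representation of $G$, with the adjoint entry $\mathbb{V}\simeq\mathfrak{sl}_2(\CC)$ appearing as the complexification of the adjoint representation of $\mathfrak{su}_2(\CC)$, consistent with our convention that representations are on complex spaces apart from adjoint ones. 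Feeding in the low-rank isomorphisms of Remark~\ref{rmk:small} in their compact incarnations, $\mathfrak{su}_2(\CC)\simeq\mathfrak{so}_3(\RR)\simeq\mathfrak{u}_1(\HH)$, $\mathfrak{su}_4(\CC)\simeq\mathfrak{so}_6(\RR)$ and $\mathfrak{so}_5(\RR)\simeq\mathfrak{u}_2(\HH)$, one recovers exactly items (a)--(h).

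\textbf{Main obstacle.} The only genuine work is the bookkeeping in this last step: one must verify the concrete identifications in the statement (e.g. that $\mathbb{H}^n$ is the standard representation of $\mathfrak{u}_n(\HH)$ and $\mathbb{H}^2$ the spin representation of $\mathfrak{so}_5(\RR)$) and check that every entry of the complex list of Theorem~\ref{thm:classification1} is matched exactly once under the correspondence, including the coincidences forced by Remark~\ref{rmk:small}. This is a finite verification and I do not expect a conceptual difficulty beyond it.
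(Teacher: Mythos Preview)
Your proposal is correct and follows essentially the same route as the paper: the paper's own proof simply invokes Proposition~\ref{prop:Characterization VI} (Characterization~VI), Lemmas~\ref{lem:An}, \ref{lem:Bn}, \ref{lem:C_n}, \ref{lem:Dn}, and Remarks~\ref{rmk:spin8} and~\ref{rmk:small}, which amounts to exactly the reduction to Cayley configuration, the classification of such configurations type by type, and the low-rank identifications you describe. Your use of Characterization~IV (Proposition~\ref{cond: Lie alg 3 TFAE}) together with Classification~I (Theorem~\ref{thm:classification1}) is an equivalent packaging of the same ingredients, and your explicit complexification step via $\Ind/\Res$ (Proposition~\ref{prop: irr R,C rep}) makes transparent what the paper leaves implicit when applying the complex lemmas to the compact form.
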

\begin{proof}
The list can be easily obtained by Proposition~\ref{prop:Characterization VI},  Lemmas~\ref{lem:An},  \ref{lem:Bn},  \ref{lem:C_n},  \ref{lem:Dn},  Remarks~\ref{rmk:spin8} and \ref{rmk:small}.
\end{proof}
The following is a direct consequence of Theorems~\ref{thm:classification1} and \ref{thm:classification2}.
\begin{corollary}
Let $G$ be a simple Lie group over $\mathbb{F}$ such that $\mathfrak{g}^{\mathbb{C}} \ne \mathfrak{spin}_8(\mathbb{C})$ ($\mathbb{F} = \mathbb{R}$) and $\mathfrak{g} \ne \mathfrak{spin}_8(\mathbb{C})$ ($\mathbb{F} = \mathbb{C}$).  Then representations with Cayley configuration of $G$ are fixed by $\Out(G)$.  If $(\mathbb{F},  \mathfrak{g}^{\mathbb{C}}) = (\mathbb{R},  \mathfrak{spin}_8(\mathbb{C}))$ or $(\mathbb{F},  \mathfrak{g}) = (\mathbb{C},  \mathfrak{spin}_8(\mathbb{C}))$,  then Caley representations of $G$ are $\mathbb{C}^8,  \mathbb{S}_{+}$ and $\mathbb{S}_{-}$ which are permuted by $\Out(G) \simeq \mathfrak{S}_3$.  
\end{corollary}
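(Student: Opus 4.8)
The plan is to read the statement off the complete classification already obtained: Theorems~\ref{thm:classification1} and \ref{thm:classification2} list every representation with Cayley configuration of a classical complex simple Lie group and of a compact real simple Lie group, while the Stability~II corollary says that $\Out(G)$ permutes this finite set of isomorphism classes. So the task reduces to checking, type by type, that outside of $D_4$ this permutation action is trivial, and that in the $D_4$ case it is the full $\mathfrak{S}_3$ action recorded in Remark~\ref{rmk:spin8}.

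First I would recall that, since $\mathfrak{g}$ (resp. $\mathfrak{g}^{\mathbb C}$) is simple, $\Out(G)$ is the automorphism group of the Dynkin diagram of $\mathfrak{g}^{\mathbb C}$: it is trivial for types $A_1$, $B_n$, $C_n$ and the exceptional types other than $E_6$; it is $\mathbb Z/2\mathbb Z$ for types $A_n$ ($n\ge2$), $D_n$ ($n\ge3$, $n\ne4$) and $E_6$; and it is $\mathfrak{S}_3$ for $D_4$. When $\Out(G)$ is trivial there is nothing to prove. When $\Out(G)\simeq\mathbb Z/2\mathbb Z$, I would run through the classification: for $E_6$ (and all exceptional types) there is no representation with Cayley configuration at all, so the assertion is vacuous; for type $A_n$ ($n\ge2$) the only one is $\mathsf{\Lambda}^2\mathbb C^4$ in type $A_3$; for type $D_n$ ($n\ge5$) the only one is the standard representation $\mathbb C^{2n}$. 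In each of these two cases the highest weight — $L_1+L_2=\omega_2$ for $\mathsf{\Lambda}^2\mathbb C^4$ and $L_1=\omega_1$ for $\mathbb C^{2n}$ — is visibly invariant under the unique nontrivial diagram automorphism (which sends $\omega_i\mapsto\omega_{n+1-i}$ in type $A_n$ and interchanges only the two fork-node fundamental weights in type $D_n$), so the representation is fixed by $\Out(G)$; equivalently, one may invoke the Self-duality corollary together with the fact that the nontrivial outer automorphism acts on $R_0(G)$ as $\mathbb V\mapsto\mathbb V^\ast$ for $A_n$ and for $D_n$ with $n$ odd, while for $D_n$ with $n$ even it fixes all non-spinorial self-dual classes and only swaps the two spinor representations. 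The compact real form case is identical via complexification, as $\Out$ of the compact form coincides with the diagram automorphism group of $\mathfrak{g}^{\mathbb C}$ and the list of Theorem~\ref{thm:classification2} matches that of Theorem~\ref{thm:classification1} under this identification; Corollary~\ref{cor:stability-1} guarantees that this identification is compatible with Cayley-ness.

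It remains to treat $D_4$, which is exactly the case excluded above. By Theorems~\ref{thm:classification1} and \ref{thm:classification2} the representations with Cayley configuration of $\Spin_8(\mathbb C)$ (resp. of its compact real form) coincide with the Cayley representations and are precisely $\mathbb C^8$, $\mathbb S_+$ and $\mathbb S_-$; by Remark~\ref{rmk:spin8} the group $\Out(\Spin_8(\mathbb C))\simeq\mathfrak{S}_3$ acts on $\{\mathbb C^8,\mathbb S_+,\mathbb S_-\}$ as the full symmetric group. Hence in this case the three Cayley representations are permuted transitively rather than individually fixed, which is the second assertion. The only genuine bookkeeping — and the single point where care is needed — is matching the action of the nontrivial outer automorphism onto the concrete representations appearing in the classification list; once $\mathsf{\Lambda}^2\mathbb C^4$ and $\mathbb C^{2n}$ are seen to have diagram-invariant highest weights (or, in the $D_4$ case, once the $\mathfrak{S}_3$-orbit $\{\mathbb C^8,\mathbb S_+,\mathbb S_-\}$ is recognized), everything else is immediate.
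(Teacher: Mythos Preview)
Your proposal is correct and takes essentially the same approach as the paper: the paper simply states that the corollary is ``a direct consequence of Theorems~\ref{thm:classification1} and~\ref{thm:classification2}'' with no further argument, and you have spelled out exactly how that deduction goes by running through the Dynkin types and checking that the listed highest weights are diagram-invariant outside $D_4$. Your write-up is thus a fully fleshed-out version of the paper's one-line proof.
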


\bibliographystyle{plain}
\bibliography{cayley_ref}

 
\end{document}